\tikzset{
v/.style={draw, fill, circle, minimum size=1.5mm, inner sep=0},
u/.style={draw, fill, circle, minimum size=0.3mm, inner sep=0},
b/.style={draw , regular polygon,regular polygon sides=4, minimum size=1.5mm, inner sep=.5mm},
e/.style={very thick},
vs/.style={draw, fill, circle, minimum size=1mm, inner sep=0},
bs/.style={draw,  regular polygon,regular polygon sides=4, minimum size=2mm, inner sep=0mm},
es/.style={thick}
}
\newlength{\nodeheight}
\newlength{\nodewidth}
\theoremstyle{plain}
\newtheorem{theorem}{Theorem}[section]
\theoremstyle{definition} \newtheorem{definition}[theorem]{Definition}
\theoremstyle{plain}
\newtheorem{lemma}[theorem]{Lemma}
\theoremstyle{plain} \newtheorem{proposition}[theorem]{Proposition}
\theoremstyle{plain} 
\theoremstyle{plain} \newtheorem{corollary}[theorem]{Corollary}
\theoremstyle{remark} 
\theoremstyle{remark} \newtheorem{remark}[theorem]{Remark}
\theoremstyle{remark} \newtheorem{example}[theorem]{Example}
\theoremstyle{remark} 
\theoremstyle{definition} 
\providecommand{\varitem}{} 
\newenvironment{axioms}[1]
 {\renewcommand\varitem[1]{\item[\textbf{#1\arabic{enumi}\rlap{$##1$}.}]%
    \edef\@currentlabel{#1\arabic{enumi}{$##1$}}}%
  \enumerate[label=\textbf{#1\arabic*.}, ref=#1\arabic*]}
 {\endenumerate}
\renewcommand{\t}{\mathbbm{1}}
\newcommand{\Tor}{\mathrm{Tor}}
\newcommand{\Brauer}{\mathrm{Br}}
\newcommand{\TL}{\mathrm{TL}}
\newcommand{\Jones}{\mathrm{J}}
\newcommand{\norm}[1]{\lvert{#1}\rvert}
\newcommand{\Span}{\mathrm{Span}}
\newcommand{\wossit}{naive-cellular}
\newcommand{\sth}{pair-graph}
\newcommand{\linkstate}{link state}
\newcommand{\link}{link}
\newcommand{\Link}{Link}
\newcommand{\axiomletter}{W}
\begin{document}

\title[Homology and cellular algebras]{Homological stability and Graham-Lehrer cellular algebras}

\author{Guy Boyde}
\address{Mathematical Institute, Utrecht University, Heidelberglaan 8
3584 CS Utrecht, The Netherlands}
\email{g.boyde@uu.nl}

\begin{abstract} We show how to formulate some recent results from homological stability of algebras in Graham and Lehrer's language of cellular algebras. The aim is to begin to connect the new results from topology to well-established representation theoretic understandings of these objects.
\end{abstract}

\maketitle

\section{Introduction}

Recently, there has been interest among topologists in certain diagram algebras, such as the Temperley-Lieb \cite{BH,BHComb}, Brauer \cite{BHP}, Iwahori-Hecke \cite{Hepworth,Moselle}, partition \cite{BHP2, Boyde2}, and Jones annular algebras \cite{Boyde2}, as well as equivariant and braided analogues of these \cite{Graves}. These algebras have a long history in representation theory (see for example \cite{GrahamLehrer,KonigXi1,KonigXi2, Xi,HalversonRam,RidoutSaintAubin,BDM}), and Patzt and Sitaraman \cite{Patzt, Sitaraman} have shown that topological language can be used to prove results about representations, so it would be good to show conversely that the representation theoretic language can be used to answer topological questions. The aim of this paper is to make some basic steps in this direction.

Our algebras $A$ (over a commutative unital ground ring $R$) will always be equipped with a trivial module $\t$. For the Brauer and Temperley-Lieb algebras, for example, this is the one-dimensional $R$-module where diagrams with the maximal number of left-to-right connections act by 1, and other diagrams act by zero. By the \emph{homology of $A$} we will always mean the $R$-modules
$$\Tor_q^A(\t,\t)$$
for $q \geq 0$. The primary goal on the topology side is the computation of these modules.

In this paper we attempt to connect this goal to representation theory by giving reasonably general results which describe the homology, with hypotheses in a form familiar to representation theorists.

\subsection{Classes of algebras}

We will define two new classes of algebras, which we call \emph{naive-cellular} and \emph{diagram-like}. Both are inspired by the Graham and Lehrer's \emph{cellular algebras} \cite{GrahamLehrer}, and the definitions are closely related. For a given algebra, we have a diagram of implications:
\begin{center}
\begin{tikzcd}
\textrm{cellular} \arrow[dr,Rightarrow,"\textrm{Proposition \ref{prop:cellIsWossit}}"] & & \textrm{diagram-like} \arrow[dl,Rightarrow,"\textrm{Proposition \ref{prop:dgrmLikeIsWossit}}"] \\
& \textrm{naive-cellular} & 
\end{tikzcd}
\end{center}

We introduce these classes essentially because we wish to work in the language of cellular algebras, but view the algebras themselves in a less refined way. In particular, the Robinson-Schensted correspondence seems to be unnecessary for our purposes and too sensitive to the ground ring.

Naive-cellular algebras are essentially defined (Definition \ref{algDef}) by stripping away the Robinson-Schensted correspondence from Graham and Lehrer's definition. What remains is just the idea (familiar from Graham and Lehrer's work) that for example a Brauer diagram may be uniquely decomposed into `two link states and a permutation' or `a left piece, a right piece, and middle piece':

\begin{center}
\begin{tikzpicture}[x=1.5cm,y=-.5cm,baseline=-1.05cm]

\def\wid{2}
\def\hei{0.5}
\def\nodesize{3}
\def\ang{90}

\node[v, minimum size=\nodesize] (a1) at (0* \wid,0*\hei) {};
\node[v, minimum size=\nodesize] (a2) at (0* \wid,1*\hei) {};
\node[v, minimum size=\nodesize] (a3) at (0* \wid,2*\hei) {};
\node[v, minimum size=\nodesize] (a4) at (0* \wid,3*\hei) {};
\node[v, minimum size=\nodesize] (a5) at (0* \wid,4*\hei) {};
\node[v, minimum size=\nodesize] (a6) at (0* \wid,5*\hei) {};
\node[v, minimum size=\nodesize] (a7) at (0* \wid,6*\hei) {};

\node[v, minimum size=\nodesize] (d1) at (1* \wid,0*\hei) {};
\node[v, minimum size=\nodesize] (d2) at (1* \wid,1*\hei) {};
\node[v, minimum size=\nodesize] (d3) at (1* \wid,2*\hei) {};
\node[v, minimum size=\nodesize] (d4) at (1* \wid,3*\hei) {};
\node[v, minimum size=\nodesize] (d5) at (1* \wid,4*\hei) {};
\node[v, minimum size=\nodesize] (d6) at (1* \wid,5*\hei) {};
\node[v, minimum size=\nodesize] (d7) at (1* \wid,6*\hei) {};

\draw[e] (a1) to [out=0,in=0] (a3);
\draw[e] (a2) to [out=0,in=0] (a6);

\draw[e] (a4) to [out=0,in=180] (d3);
\draw[e] (a5) to [out=0,in=180] (d5);
\draw[e] (a7) to [out=0,in=180] (d1);

\draw[e] (d2) to [out=180,in=180] (d4);
\draw[e] (d6) to [out=180,in=180] (d7);

\end{tikzpicture}
\quad
$=$
\quad
\begin{tikzpicture}[x=1.5cm,y=-.5cm,baseline=-1.05cm]

\def\wid{2}
\def\hei{0.5}
\def\nodesize{3}
\def\ang{90}

\node[v, minimum size=\nodesize] (a1) at (0.5* \wid,0*\hei) {};
\node[v, minimum size=\nodesize] (a2) at (0.5* \wid,1*\hei) {};
\node[v, minimum size=\nodesize] (a3) at (0.5* \wid,2*\hei) {};
\node[v, minimum size=\nodesize] (a4) at (0.5* \wid,3*\hei) {};
\node[v, minimum size=\nodesize] (a5) at (0.5* \wid,4*\hei) {};
\node[v, minimum size=\nodesize] (a6) at (0.5* \wid,5*\hei) {};
\node[v, minimum size=\nodesize] (a7) at (0.5* \wid,6*\hei) {};

\node[] (b1) at (1* \wid,1*\hei) {};
\node[] (b2) at (1* \wid,3*\hei) {};
\node[] (b3) at (1* \wid,5*\hei) {};

\node[] (c1) at (1.5* \wid,1*\hei) {};
\node[] (c2) at (1.5* \wid,3*\hei) {};
\node[] (c3) at (1.5* \wid,5*\hei) {};

\node[v, minimum size=\nodesize] (d1) at (2* \wid,0*\hei) {};
\node[v, minimum size=\nodesize] (d2) at (2* \wid,1*\hei) {};
\node[v, minimum size=\nodesize] (d3) at (2* \wid,2*\hei) {};
\node[v, minimum size=\nodesize] (d4) at (2* \wid,3*\hei) {};
\node[v, minimum size=\nodesize] (d5) at (2* \wid,4*\hei) {};
\node[v, minimum size=\nodesize] (d6) at (2* \wid,5*\hei) {};
\node[v, minimum size=\nodesize] (d7) at (2* \wid,6*\hei) {};

\draw[e] (b1) to [out=0,in=180] (c2);
\draw[e] (b2) to [out=0,in=180] (c3);
\draw[e] (b3) to [out=0,in=180] (c1);

\draw[e] (a1) to [out=0,in=0] (a3);
\draw[e] (a2) to [out=0,in=0] (a6);

\draw[e] (a4) to [out=0,in=180] (b1);
\draw[e] (a5) to [out=0,in=180] (b2);
\draw[e] (a7) to [out=0,in=180] (b3);

\draw[e] (d2) to [out=180,in=180] (d4);
\draw[e] (d6) to [out=180,in=180] (d7);

\draw[e] (c1) to [out=0,in=180] (d1);
\draw[e] (c2) to [out=0,in=180] (d3);
\draw[e] (c3) to [out=0,in=180] (d5);

\end{tikzpicture}
\end{center}

For some of what we want to do, this definition is not rigid enough, so we introduce diagram-like algebras (Definition \ref{def:dgrmLike}). Roughly, this is an algebra with a basis which can be decomposed into `a left piece, a right piece, and middle piece', as before, but now the product of two basis elements must be a multiple of another basis element, and the decomposition of the result must depend in a specific (fairly obvious) way on the decomposition of the factors. We will show that the Brauer, Jones annular, and Temperley-Lieb algebras are diagram-like, hence naive-cellular. The facts we need are essentially already written down in Graham and Lehrer's paper. Xi \cite{Xi} has shown that the partition algebras are cellular, and they are almost certainly diagram-like. However, Xi takes a different approach to Graham and Lehrer, and this paper is already quite long, so we chose to omit the partition algebras, rather than spend more pages proving the necessary basic facts.

One concrete sense in which cellular algebras are `too refined' is as follows: the results we wish to prove sometimes hold in situations where the algebra in question is not cellular. For example, we will use our methods to provide an alternate proof Theorem 1.5 of \cite{Boyde2}, which says that the homology of the Jones annular algebra $\Jones_n(\delta)$ coincides with that of the cyclic group $C_n$ whenever $n$ is odd or $\delta$ is invertible, but Graham and Lehrer's proof that $\Jones_n(\delta)$ is cellular assumes the splitting of a certain polynomial in the ground ring \cite[Theorem 6.15]{GrahamLehrer}.

A word of caution: although cellular algebras are naive-cellular, we will not in general be working with the naive-cellular structure obtained from Graham and Lehrer's cellular structure. This happens to be true for the Temperley-Lieb algebras (because `the only planar permutation is the identity') but for the Brauer and Jones annular algebras we use a different structure (albeit one which shows up in Graham and Lehrer's work - see Remark \ref{rmk:NotTheSame}).

\subsection{Overview of results}

Our first main result, Theorem \ref{thm:global}, is for computing all of the homology of an algebra. We frame it for naive-cellular algebras, and is intended to illustrate the connection to Graham and Lehrer's language, thereby making clear that the hypotheses are quite mild. As an example, we will use it to recover the `global' results on the Jones annular algebras $\Jones_n(\delta)$ from \cite{Boyde2}, taking the opportunity to give an entertaining alternative proof via `twisting' that would not have been possible for the Temperley-Lieb algebras. It could equally well have been used to recover the known global results on the Temperley-Lieb \cite{BH,Sroka}, Brauer \cite{BHP}, or Partition algebras \cite{BHP2}, but we will restrict ourselves to a single example. In Section \ref{subsection:globalResults}, we will state this theorem, and then in Section \ref{subsection:Cellular} we will give the special case of that statement for cellular algebras. These statements closely resemble some of Graham and Lehrer's \cite{GrahamLehrer}.

It is also desirable to make some connection to stability, which is to say, results which hold under milder hypotheses, but only in a range of homological degrees. From a topological point of view, these are the more interesting results. We will prove a technical proposition (Proposition \ref{prop:Idempotents}) and show how to use it in tandem with the main theorem of our previous paper \cite{Boyde2} to prove basic stability results. This is where we introduce diagram-like algebras. Roughly speaking, we want to think of Proposition \ref{prop:Idempotents} as the representation-theoretic `front-end' of this pipeline, and the main theorem from \cite{Boyde2} as the topological `back-end'. As an example, we will recover Sroka's homological stability and vanishing result for the Temperley-Lieb algebras. Again, we could equally well have recovered the known stability results for the Temperley-Lieb \cite{BH,Sroka}, Jones annular \cite{Boyde2}, or Partition algebras \cite{BHP2}, but we will restrict ourselves to a single example.

The hope is that by running the machine consisting of Proposition \ref{prop:Idempotents} and the main theorem of \cite{Boyde2}, one puts the topology in a black box, so that users not familiar with e.g. spectral sequences can easily establish basic stability results. We stress that stability results obtained in this way will not in general be optimal: Boyd-Hepworth's original proof of stability for the Temperley-Lieb algebras obtains a better range, of slope $1$, and requires much more intricate work. That said, for the partition algebras, one can obtain the optimal stability range due to Boyd-Hepworth-Patzt \cite{BHP2} in this way \cite{Boyde2}.

\subsection{Global results} \label{subsection:globalResults}

Readers familiar with cellular algebras may wish to first skip to Section \ref{subsection:Cellular}, where we give a version in that context. We will say that a subset $X$ of a poset $\Lambda$ is \emph{downward closed} if whenever $\mu \leq \lambda$ and $\lambda \in X$ then $\mu \in X$. Many of the symbols appearing here will not be defined until later: for naive-cellular algebras see Definition \ref{algDef}, for the modules $W(\lambda)$ see Definition \ref{def:LinkMod}, for the bilinear forms $\langle \phantom{x},\phantom{x} \rangle_{\tau}$ see Definition \ref{def:innerProduct}, and for the ideals $I_X$ see Definition \ref{def:I}.

\begin{theorem} \label{thm:global} Let $A$ be a {\wossit} algebra over a ring $R$, with {\wossit} datum $(\Lambda, G, M, C, *)$. Let $Y \subset X$ be downward closed subsets of $\Lambda$, with $X \setminus Y$ finite. Suppose that for each $\lambda \in X \setminus Y$ we have the following. For every $q \in M(\lambda)$, there exists $v \in W(\lambda)$ such that for some $\sigma \in G(\lambda)$ we have $$\langle C_q, v \rangle_{\tau} = \begin{cases} 1 & \textrm{ if } \tau=\sigma, \textrm{ and} \\ 0 & \textrm{ otherwise.} \end{cases}$$
Then, for any right $A$-module $M$ and left $A$-module $N$, where $I_X$ acts trivially on both, we have
$$\Tor_*^{\faktor{A}{I_{Y}}}(M,N) \cong \Tor_*^{\faktor{A}{I_{X}}}(M,N).$$ \end{theorem}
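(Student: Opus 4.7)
The plan is to proceed by induction on $|X \setminus Y|$, which is finite by hypothesis. The base case $X = Y$ is trivial, so suppose $X \supsetneq Y$ and pick $\lambda$ maximal in $X \setminus Y$. Setting $X' = X \setminus \{\lambda\}$, maximality of $\lambda$ ensures that $X'$ remains downward closed in $\Lambda$, and $Y \subseteq X' \subsetneq X$. By the inductive hypothesis $\Tor_*^{A/I_Y}(M,N) \cong \Tor_*^{A/I_{X'}}(M,N)$, so it is enough to treat the one-step case $X \setminus Y = \{\lambda\}$ with $\lambda$ maximal in $X$.

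In that reduced situation, write $B = A/I_Y$, $B' = A/I_X$, and $J = I_X/I_Y = \ker(B \twoheadrightarrow B')$. Since $I_X$ acts trivially on $M$ and $N$, both descend to $B'$-modules, and $B' \otimes_B N \cong N/JN = N$. The change-of-rings (Cartan--Eilenberg) spectral sequence
\[ E^2_{p,q} = \Tor_p^{B'}\!\bigl(M, \Tor_q^{B}(B', N)\bigr) \Longrightarrow \Tor_{p+q}^{B}(M, N) \]
then reduces the desired isomorphism to the vanishing $\Tor_q^{B}(B', N) = 0$ for every $q \geq 1$ and every left $B'$-module $N$.

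The heart of the proof is extracting this vanishing from the naive-cellular data. Maximality of $\lambda$ in $X$ means that $J$ is, inside $B$, spanned by the cellular-type basis elements attached to $\lambda$ (indexed by $M(\lambda) \times G(\lambda) \times M(\lambda)$), with the product of two such elements computed modulo $I_Y$ by the forms $\langle -, - \rangle_\tau$ on $W(\lambda)$. The hypothesis supplies, for each $q \in M(\lambda)$, a ``dual'' vector $v_q \in W(\lambda)$ whose pairing against $C_q$ is a Kronecker delta in $\tau$. Lifting the $v_q$ to $B$ produces matrix-unit-like elements in $J$, from which I would extract (a) that $J$ is projective as a right $B$-module and (b) that $J$ is an idempotent ideal ($J^2 = J$). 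The short exact sequence $0 \to J \to B \to B' \to 0$ combined with (a) yields $\Tor_q^{B}(B', N) = 0$ for $q \geq 2$; combining (b) with $JN = 0$ gives $J \otimes_B N = 0$, which forces $\Tor_1^{B}(B', N) = 0$ as well.

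The hard part is (a) and (b): carefully assembling the $v_q$ into an orthogonal matrix-unit system on $J$. The $v_q$ are produced by the hypothesis essentially formally, but one must verify that the lower-order terms appearing in the cellular product lie in ideals indexed by strictly smaller elements of $\Lambda$ --- hence in $I_Y$, once $\lambda$ is maximal in $X$ --- so that they genuinely vanish modulo $I_Y$, and that the resulting elements assemble into a projective right-$B$-module summand of the required form. This should reduce to a bookkeeping exercise inside the naive-cellular axioms, but is where the non-degeneracy hypothesis is really used.
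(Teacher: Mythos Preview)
Your strategy is sound and the conclusion follows, but the packaging differs from the paper's in a couple of instructive ways.

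First, the induction direction: you peel off a \emph{maximal} $\lambda$ from $X$, while the paper adds a \emph{minimal} $\lambda \in X\setminus Y$ to $Y$ (using that $Y\cup\{\lambda\}$ stays downward closed). Both work, but note your phrase ``$\lambda$ maximal in $X$'' is not what you actually need or get; what matters is that $\lambda$ is minimal in $\Lambda\setminus X'$, so that $I_{<\lambda}\subseteq I_{X'}$ and the naive-cellular product formulae hold on the nose in $B=A/I_{X'}$.

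Second, the homological reduction: you run the change-of-rings spectral sequence and reduce to (a) $J$ right-projective over $B$ and (b) $J^2=J$. The paper instead quotes a black-box result (Theorem~3.3 of \cite{Boyde}): if a two-sided ideal acting trivially on $M,N$ decomposes as a direct sum of left ideals each generated by an idempotent, then $\Tor$ is unchanged upon quotienting. These are essentially the same argument once unwound---your (a) and (b) follow from exactly the idempotent decomposition $J\cong\bigoplus_{q\in M(\lambda)}\pi(J_q)$ with $\pi(J_q)=Be_q$ (left projectivity, then right projectivity via the involution $*$, and $J^2=J$ since $e_q\in J$).

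Where you wave your hands---``assembling the $v_q$ into an orthogonal matrix-unit system'' as ``a bookkeeping exercise''---is precisely where the paper does the real work. The paper does \emph{not} build matrix units; it shows directly (Proposition~\ref{prop:formBegetIdempt} plus Lemma~\ref{lem:ezCase}) that the single hypothesis on $\langle C_q,v\rangle_\tau$ already produces an idempotent generator $e_q$ of $\pi(J_q)$, by computing $C_{p,q}^\sigma e_q$ explicitly via the bilinear forms. No orthogonality between different $q$ is needed because the direct-sum decomposition of $J$ comes for free from the basis (Lemma~\ref{lem:decomposition}). So your outline is right, but the execution is cleaner than ``matrix units'': one idempotent per $J_q$, obtained straight from the form, suffices.
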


We will prove this theorem in Section \ref{section:GlobalResults}. In the case of cellular algebras, our definitions coincide with Graham and Lehrer's, and their results suggest that the hypotheses of this theorem are not too strong (see Section \ref{subsection:Cellular}).

\subsection{Results which hold only in a range}

In our paper \cite{Boyde2} we made the following definition and proved the following theorem. Write $\underline{w}$ for the set $\{1,2,\dots, w\}$.

\begin{definition} Let $A$ be an $R$-algebra, let $I$ be a twosided ideal of $A$, and let $w \geq h \geq 1$. An \emph{idempotent (left) cover} of $I$ of \emph{height $h$} and \emph{width $w$} is a finite collection of left ideals $J_1, \dots J_w$ of $A$, which cover $I$ in the sense that $J_1 + \dots + J_w = I$, and such that for each $S \subset \underline{w}$ with $\norm{S} \leq h$, the intersection $$\bigcap_{i \in S} J_i$$ is either zero or is a principal left ideal generated by an idempotent. If $I$ is free as an $R$-module, then an idempotent cover is said to be \emph{$R$-free} if there is a choice of $R$-basis for $I$ such that each $J_i$ is free on a subset of this basis.
\end{definition}

\begin{theorem} \label{oldThm} Let $A$ be an augmented $R$-algebra with trivial module $\t$. Let $I$ be a twosided ideal of $A$ which is free as an $R$-module and acts trivially on $\t$. Suppose that there exists an $R$-free idempotent left cover of $I$ of height $h$. Then the natural map $$\Tor_q^A(\t,\t) \to \Tor_q^{\faktor{A}{I}}(\t,\t)$$ is an isomorphism for $q \leq h - 2$, and a surjection for $q = h-1$. If $h$ is equal to the width $w$ of the cover, then this map is an isomorphism for all $q$.
\end{theorem}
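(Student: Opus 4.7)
The plan is to use the idempotent cover to produce a \v{C}ech-type resolution of $I$, deduce the vanishing of low-degree $\Tor_*^A(\t, I)$, and then invoke a change-of-rings argument to compare $\Tor^A(\t,\t)$ with $\Tor^{A/I}(\t,\t)$. The main subtlety will be the exactness of the \v{C}ech complex, which is where the $R$-freeness hypothesis is used in an essential way.

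First I would form the \v{C}ech complex $D_\bullet$ with $D_p = \bigoplus_{|S|=p+1} \bigcap_{i \in S} J_i$, equipped with the usual alternating-sum differentials and augmented by $D_0 = \bigoplus_i J_i \twoheadrightarrow I$. The $R$-freeness hypothesis makes this augmented complex exact by reducing to a combinatorial fact: for each basis element $b$ of the given $R$-basis of $I$, set $S(b) = \{i : b \in J_i\}$, and observe that the $R \cdot b$-isotypic summand of $D_\bullet$ is the augmented simplicial chain complex of the full simplex on vertex set $S(b)$, which is contractible. By the height hypothesis, for $|S| \leq h$ (equivalently $p \leq h-1$) each intersection $\bigcap_{i \in S} J_i$ is either zero or of the form $A e_S$ for an idempotent $e_S \in I$, so $D_p$ is $A$-projective, and moreover $\t \otimes_A A e_S = \t e_S = 0$ because $I$ acts trivially on $\t$.

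Next, dimension shifting through the short exact sequences $0 \to K_j \to D_j \to K_{j-1} \to 0$ coming from the resolution (with $K_{-1} = I$) yields $\Tor_n^A(\t, I) = 0$ for $n \leq h-1$: since $D_j$ is projective and $\t$-annihilated for $j \leq h-1$, the successive isomorphisms $\Tor_n^A(\t, K_{j-1}) \cong \Tor_{n-1}^A(\t, K_j)$ bring one down to $\t \otimes_A K_{n-1}$, which is in turn a quotient of $\t \otimes_A D_n = 0$. The long exact sequence of $0 \to I \to A \to A/I \to 0$ then gives $\Tor_n^A(\t, A/I) = 0$ for $1 \leq n \leq h$.

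For the final comparison, pick an $A$-projective resolution $P_\bullet \to \t$; then $P_\bullet \otimes_A A/I$ is a complex of $A/I$-projectives whose homology is $\t$ in degree zero and vanishes in degrees $1, \dots, h$. Lifting the identity $\t = \t$ to a chain map $f\colon P_\bullet \otimes_A A/I \to Q_\bullet$ into any $A/I$-projective resolution $Q_\bullet \to \t$, the mapping cone of $f$ is a bounded-below complex of $A/I$-flats that is acyclic in degrees $\leq h$. The hypertor spectral sequence shows this acyclicity persists after tensoring with $\t$ over $A/I$, so $P_\bullet \otimes_A \t \to Q_\bullet \otimes_{A/I} \t$ is a quasi-isomorphism in degrees $\leq h-1$ and surjective in degree $h$, implying the claimed iso/surjection range on $\Tor$. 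In the special case $h=w$, every $D_p$ is projective and $\t$-annihilated, so $\t \otimes_A D_\bullet$ is identically zero; since $D_\bullet$ is then a genuine projective resolution of $I$, we conclude $\Tor_*^A(\t, I) = 0$ in every degree, and the natural map is an isomorphism for all $q$.
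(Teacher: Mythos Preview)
This theorem is not proved in the present paper: it is quoted from the author's earlier work \cite{Boyde2} and used as a black box, so there is no in-paper argument to compare against directly.

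Your proof is correct. The \v{C}ech complex built from the cover is exact because the $R$-freeness hypothesis lets you decompose it basis-element-by-basis-element into augmented chain complexes of nonempty simplices (nonemptiness of $S(b)$ follows since $b$ is a basis vector lying in $\sum J_i$ and each $J_i$ is spanned by basis vectors). The terms $D_p$ with $p\le h-1$ are direct sums of modules $Ae_S$ with $e_S\in I$ idempotent, hence projective and killed by $\t\otimes_A(-)$. Dimension-shifting then gives $\Tor^A_n(\t,I)=0$ for $n\le h-1$, the long exact sequence of $0\to I\to A\to A/I\to 0$ gives $\Tor^A_n(\t,A/I)=0$ for $1\le n\le h$, and your mapping-cone/hypertor comparison finishes the job. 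The $h=w$ case is exactly as you say: $D_\bullet$ is then a finite projective resolution of $I$ with $\t\otimes_A D_\bullet=0$, so $\Tor^A_*(\t,I)$ vanishes identically.

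One small remark on bookkeeping: your count is slightly conservative. From $\Tor^A_n(\t,A/I)=0$ for $1\le n\le h$ the cone $C(f)$ is actually acyclic through degree $h+1$ (not just $h$), so the hypertor argument yields an isomorphism for $q\le h$ and a surjection for $q=h+1$, which is stronger than the range asserted in the theorem. This only helps, and the statement as written follows a fortiori.
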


In that paper, we constructed covers for standard ideals of the Jones annular and partition algebras, computed their height, and applied the theorem to obtain stable homology isomorphisms. The most challenging step of this process is to compute the height (i.e. to show that certain ideals are principal and generated by idempotents), and the point of the second half of this paper is that the (technical) Proposition \ref{prop:Idempotents} can be used to put this computation into cellular-type language. As a proof of concept, we will use this method to recover Sroka's results on the homology of the Temperley-Lieb algebras.

\subsection{The cellular case of Theorem \ref{thm:global}} \label{subsection:Cellular}

Consider Theorem \ref{thm:global}. In the cellular case, each $G(\lambda)$ is trivial (Proposition \ref{prop:cellIsWossit}), and the single bilinear form $\langle \phantom{x}, \phantom{y} \rangle_1$ corresponding to the identity element is their bilinear form $\phi_\lambda$ (c.f. Remarks \ref{rmk:cellMod} and \ref{rmk:cellForm}). The resulting special case of Theorem \ref{thm:global} is as follows:

\begin{theorem} \label{thm:globalCellular} Let $A$ be a cellular algebra over a ring $R$, with cell datum $(\Lambda, M, C, *)$. Let $Y \subset X$ be downward closed subsets of $\Lambda$, with $X \setminus Y$ finite. Suppose that for each $\lambda \in X \setminus Y$ we have the following. For every $q \in M(\lambda)$, there exists $v \in W(\lambda)$ such that $$\phi_\lambda(C_q, v) = 1.$$
Then, for any right $A$-module $M$ and left $A$-module $N$, where $I_X$ acts trivially on both, we have
$$\Tor_*^{\faktor{A}{I_{Y}}}(M,N) \cong \Tor_*^{\faktor{A}{I_{X}}}(M,N).$$ \end{theorem}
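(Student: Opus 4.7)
The plan is to derive Theorem \ref{thm:globalCellular} as a direct specialization of Theorem \ref{thm:global}, using the dictionary between cellular and naive-cellular structures. The first step is to invoke Proposition \ref{prop:cellIsWossit}, which equips a cellular algebra with cell datum $(\Lambda, M, C, *)$ with a naive-cellular structure whose datum is $(\Lambda, G, M, C, *)$, where each group $G(\lambda)$ is trivial. This makes $X$, $Y$, the ideals $I_X$, $I_Y$, and the Tor conclusion identical in the two statements, so the only work left is to translate the hypothesis.

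With $G(\lambda) = \{1\}$, the existential over $\sigma \in G(\lambda)$ in Theorem \ref{thm:global} has a unique witness $\sigma = 1$, and the indexing set $\{\tau \in G(\lambda) : \tau \neq \sigma\}$ is empty, so the ``otherwise'' clause is vacuous. The hypothesis therefore collapses to: for every $q \in M(\lambda)$ there is some $v \in W(\lambda)$ with $\langle C_q, v\rangle_1 = 1$. By Remarks \ref{rmk:cellMod} and \ref{rmk:cellForm}, the module $W(\lambda)$ coincides with Graham and Lehrer's cell module and the bilinear form $\langle -,- \rangle_1$ coincides with their form $\phi_\lambda$. Hence the hypothesis is literally the statement $\phi_\lambda(C_q, v) = 1$ which appears in Theorem \ref{thm:globalCellular}.

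Having matched hypotheses and conclusions, the proof is just an application of Theorem \ref{thm:global} to the naive-cellular structure produced by Proposition \ref{prop:cellIsWossit}. The only real obstacle is bookkeeping: one must verify that the identifications in Proposition \ref{prop:cellIsWossit} and the two remarks genuinely send the $\tau = 1$ form built into the naive-cellular framework to Graham and Lehrer's $\phi_\lambda$, and that the trivialization of $G(\lambda)$ is compatible with the definition of the ideals $I_X$ used in both statements. Since the whole naive-cellular formalism was designed to make this translation transparent, no substantive new argument is needed beyond reading off the correspondence.
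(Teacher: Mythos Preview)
Your proposal is correct and matches the paper's approach exactly: the paper presents Theorem~\ref{thm:globalCellular} as the specialization of Theorem~\ref{thm:global} obtained by invoking Proposition~\ref{prop:cellIsWossit} to make each $G(\lambda)$ trivial, and then identifying $\langle -,-\rangle_1$ with $\phi_\lambda$ via Remarks~\ref{rmk:cellMod} and~\ref{rmk:cellForm}. No further argument is given in the paper, and none is needed.
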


In words, the condition of the theorem asks that for every member $C_q$ of the canonical basis of $W(\lambda)$, the image of the function $W(\lambda) \to R$ given by $v \mapsto \phi_\lambda(C_q,v)$ is all of $R$. If $R$ is a field, then this is equivalent to asking that the elements $C_q$ do not lie in the radical of $\phi_\lambda$.

Graham and Lehrer show \cite[Theorem 7.3]{GrahamLehrer} that, over a field, semisimplicity of $A$ is equivalent to all of the bilinear forms $\phi_\lambda$ being nondegenerate, which is a much stronger condition. Since their condition is defined $\lambda$-wise, this is actually equivalent to semisimplicity of the quotients $\faktor{A}{I_X}$ for each $X$. In this case, the above theorem is trivial: if $\faktor{A}{I_X}$ and $\faktor{A}{I_Y}$ are both semisimple, then all exact sequences of modules over these two algebras are split, so all of their finitely generated modules are projective, and both of the $\Tor$ groups appearing in Theorem \ref{thm:globalCellular} vanish automatically.

\subsection{The structure of this paper}

In Section \ref{section:Classes} we will define naive-cellular and diagram-like algebras (Definitions \ref{algDef} and \ref{def:dgrmLike}), and prove that diagram like algebras are naive-cellular (Proposition \ref{prop:dgrmLikeIsWossit}), and that cellular algebras are naive-cellular (Proposition \ref{prop:cellIsWossit}). We will then show (Section \ref{section:Ex}) that the Brauer, Jones annular, and Temperley-Lieb algebras are diagram-like. We repeat the caution that the naive-cellular structures on these algebras that we will work with are not the same as the ones obtained from Graham and Lehrer's cellular structures.

In Section \ref{section:BasicProperties}, we develop the basic theory. This follows Graham and Lehrer's development quite closely, with no important surprises. The basic theory culminates with the definition of the link modules $W(\lambda)$ and the associated bilinear form, which again is a direct analogue of Graham and Lehrer's definition.

At this stage, we have the language to prove Theorem \ref{thm:global}, and we do so in Section \ref{section:GlobalResults}. We specialise this theorem to subalgebras of the Brauer algebras in Section \ref{section:Specialisation}, and in Section \ref{section:Jones} we use the specialised theorem to recover the odd strand and invertible parameter results for the Jones annular algebra proven in \cite{Boyde2} from a different perspective.

To prove stability results, we need a bit more. In Section \ref{section:LSO} we define the notion of a \emph{link state ordering} (Definition \ref{def:LSOrdering}) and we use it to prove our second main theoretical result, Proposition \ref{prop:Idempotents}. In the final section (Section \ref{section:HS}), we use Proposition \ref{prop:Idempotents} together with the main theorem of \cite{Boyde2} (Theorem \ref{oldThm}) to give a different perspective on Sroka's proof of homological stability for Temperley-Lieb algebras.

\subsection{Omissions and further directions}

There are many things that we do not discuss here, but which might be good directions for further work.

Patzt \cite{Patzt} and Sitaraman \cite{Sitaraman} have already studied representation stability (\'a la Church-Ellenberg-Farb \cite{ChurchEllenbergFarb}) for the Temperley-Lieb algebras, and Patzt also treats the Brauer and partition algebras. We wonder if there is a useful way to take a cellular point of view on these results.

There is another point of view on cellular algebras, due to K\"onig and Xi \cite{KonigXi1,KonigXi2}. Some of what we do here closely resembles some of what they did, and it would be interesting to see if there is any connection.

\subsection*{Acknowledgements}

I would like to thank Richard Hepworth for his encouragement, and for providing the original impetus for this project, by telling me about Graham and Lehrer's paper \cite{GrahamLehrer}, and for suggesting that it might provide the correct language for the results of my paper \cite{Boyde}. I would also like to acknowledge the substantial technical debt to Graham and Lehrer. This work was supported by the European Research council (ERC) through Gijs Heuts' grant “Chromatic homotopy theory of spaces”, grant no. 950048.

\section{Classes of algebras} \label{section:Classes}

In this section, we define {\wossit} and diagram-like algebras, recalling first the definition of cellular algebras.

\subsection{Cellular algebras}

Throughout, $R$ will be a commutative ring with unit. In \cite{GrahamLehrer}, the authors define a class of algebras that they call \emph{cellular}:

\begin{definition}[{\cite[Definition 1.1]{GrahamLehrer}}] \label{cellAlgDef} A \emph{cellular algebra} over $R$ is an associative unital algebra $A$, together with \emph{cell datum} $(\Lambda, M, C, *)$, where
\begin{axioms}{C}
  \item \label{item:C1} $\Lambda$ is a poset, and for each $\lambda \in \Lambda$, $M(\lambda)$ is a finite set (the set of \emph{tableaux of type $\lambda$}) such that \begin{align*}
      \coprod_{\lambda \in \Lambda} M(\lambda) \times M(\lambda) & \to A \\
      (S,T) & \mapsto C_{S,T}^{\lambda}
  \end{align*}
  is an injective map with image an $R$-basis of $A$.
  \item \label{item:C2} The map $*$ is an $R$-linear anti-involution of $A$ such that $$(C_{S,T}^{\lambda})^*=C_{T,S}^{\lambda}.$$
  \item \label{item:C3} If $\lambda \in \Lambda$ and $S,T \in M(\lambda)$ then for any element $a \in A$ we have $$a C_{S,T}^{\lambda} \equiv \sum_{S' \in M(\lambda)} r_a(S',S) C_{S',T}^{\lambda} (\mathrm{mod} A(< \lambda)),$$ where $r_a(S',S) \in R$ is independent of $T$ and where $A(< \lambda)$ is the $R$-submodule of $A$ generated by the elements $\{C_{A,B}^{\mu} \lvert  A,B \in M(\mu), \mu < \lambda\}$.
\end{axioms}
\end{definition}

\subsection{Naive-cellular algebras}

Our main class of algebra will be defined as follows.

\begin{definition} \label{algDef} A \emph{{\wossit} algebra} over $R$ is an associative unital algebra $A$, together with \emph{{\wossit} datum} $(\Lambda, G, M, C, *)$, where
\begin{axioms}{\axiomletter}
  \item \label{item:W1} $\Lambda$ is a poset. For each $\lambda \in \Lambda$, $G(\lambda)$ is a group, and $M(\lambda)$ is a finite set (the set of \emph{{\linkstate}s of type $\lambda$}) such that \begin{align*}
      \coprod_{\lambda \in \Lambda} M(\lambda) \times G(\lambda) \times M(\lambda) & \to A \\
      (p,\sigma,q) & \mapsto C_{p,q}^{\sigma}
  \end{align*}
  is an injective map with image an $R$-basis of $A$.
  \item \label{item:W2} The map $*$ is an $R$-linear anti-involution of $A$ such that $$(C_{p,q}^{\sigma})^*=C_{q,p}^{\sigma^{-1}}.$$
  \item \label{item:W3} If $\sigma \in G(\lambda)$ and $p,q \in M(\lambda)$ then for any element $a \in A$ we have $$a C_{p,q}^{\sigma} \equiv \sum_{\substack{\sigma' \in G(\lambda) \\ p' \in M(\lambda) }} r_a(p',\sigma'\sigma^{-1},p) C_{p',q}^{\sigma'} (\mathrm{mod} I_{< \lambda}),$$ where $r_a(p',\sigma'\sigma^{-1},p) \in R$ depends only on $a$, $p'$, $p$, and the product $\sigma'\sigma^{-1}$, and where $I_{< \lambda}$ is the $R$-submodule of $A$ generated by the elements $\{C_{p'',q''}^{\tau} \lvert  \mu < \lambda, \tau \in G(\mu), p'',q'' \in M(\mu)\}$.
\end{axioms}
\end{definition}

\begin{remark} \label{rmk:W3} Much as for cellular algebras, applying the involution $*$ to the equation \ref{item:W3}, we get, for any $a \in A$, $\lambda \in \Lambda$, $\sigma \in G(\lambda)$ and $p,q \in M(\lambda)$, \begin{equation*} C_{q,p}^{\sigma^{-1}}a^* \equiv \sum_{\substack{\sigma' \in G(\lambda) \\ p' \in M(\lambda)}} r_a(p',\sigma'\sigma^{-1},p)C_{q,p'}^{(\sigma')^{-1}} (\mathrm{mod} I_{< \lambda}).
\end{equation*}
Here we have used the fact that Axiom \ref{item:W2} implies that $I_{< \lambda}^*=I_{< \lambda}$. For later use, we record the more useful form of this equation obtained by changing variables:
\begin{equation*} C_{p,q}^{\tau}b \equiv \sum_{\substack{\tau' \in G(\lambda) \\ q' \in M(\lambda)}} r_{b^*}(q',(\tau')^{-1}\tau,q)C_{p,q'}^{\tau'} (\mathrm{mod} I_{< \lambda}).
\end{equation*}
\end{remark}

It follows immediately from the definitions that {\wossit} algebras do indeed generalise cellular algebras:

\begin{proposition} \label{prop:cellIsWossit} The map $$(\Lambda, M, C, *) \mapsto (\Lambda, 1, M, C, *),$$ given by assigning the trivial group to each $\lambda \in \Lambda$, carries a cell datum to a {\wossit} datum, and identifies cellular algebras with those {\wossit} algebras where each group $G(\lambda)$ is the trivial group. \qed
\end{proposition}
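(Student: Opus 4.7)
The plan is to observe that when every $G(\lambda)$ is the trivial group, the axioms \ref{item:W1}, \ref{item:W2}, \ref{item:W3} reduce \emph{verbatim} to \ref{item:C1}, \ref{item:C2}, \ref{item:C3} under the identification $C_{p,q}^{1} \leftrightarrow C_{S,T}^{\lambda}$ (with $S = p$, $T = q$ and $\lambda$ recorded by the component of the coproduct). So the proof is entirely a matter of unpacking definitions; there is no real obstacle.

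First I would verify the forward direction: given a cell datum $(\Lambda, M, C, *)$ producing a cellular algebra $A$, I set $G(\lambda) := 1$ for all $\lambda$ and check the three naive-cellular axioms. For \ref{item:W1}, the indexing coproduct $\coprod_\lambda M(\lambda) \times G(\lambda) \times M(\lambda)$ collapses to $\coprod_\lambda M(\lambda) \times M(\lambda)$, and injectivity-onto-a-basis is exactly \ref{item:C1}. For \ref{item:W2}, with $\sigma = \sigma^{-1} = 1$ the required formula becomes $(C_{p,q}^{1})^* = C_{q,p}^{1}$, which is \ref{item:C2}. For \ref{item:W3}, I would first note that $I_{<\lambda}$ (spanned by all $C_{p'',q''}^\tau$ with $\mu < \lambda$, $\tau \in G(\mu) = 1$) coincides with Graham--Lehrer's $A(<\lambda)$; then, with all group elements forced to be $1$, the sum in \ref{item:W3} collapses to $a C_{p,q}^{1} \equiv \sum_{p'} r_a(p', 1, p) C_{p',q}^{1} \pmod{I_{<\lambda}}$, and setting $r_a(p',p) := r_a(p', 1, p)$ reproduces \ref{item:C3}, including the crucial independence-of-$q$ condition.

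For the converse direction, I would run exactly the same checks in reverse: if $(\Lambda, 1, M, C, *)$ is a naive-cellular datum with every $G(\lambda)$ trivial, then \ref{item:W1}, \ref{item:W2}, \ref{item:W3} give \ref{item:C1}, \ref{item:C2}, \ref{item:C3} verbatim by the same identification. This establishes that the map on data is a bijection between cell data and naive-cellular data with trivial groups, and since neither definition imposes any further condition on the algebra itself, the two classes of algebras are identified. \qed
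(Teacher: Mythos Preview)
Your proposal is correct and is precisely the ``immediate from the definitions'' verification that the paper has in mind; the paper offers no proof beyond the preceding sentence ``It follows immediately from the definitions'' and the \qed, and you have simply written out the obvious axiom-by-axiom check.
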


\subsection{Diagram-like algebras}

\begin{definition} \label{def:dgrmLike} A \emph{diagram-like algebra} over $R$ is defined identically to a {\wossit} algebra, except that instead of Axiom \ref{item:W3} of that definition, we require the following condition on the product of two basis elements:

For any $\lambda_1, \lambda_2 \in \Lambda$, $\sigma_1 \in G(\lambda_1), \sigma_2 \in G(\lambda_2)$, $p_1, q_1 \in M(\lambda_1)$ and $p_2,q_2 \in M(\lambda_2)$, we have:
$$C_{p_1,q_1}^{\sigma_1} C_{p_2,q_2}^{\sigma_2} = \kappa C_{p,q}^{\sigma},$$ where the objects ($\kappa,p,q,\sigma$) on the right are implicitly functions of those on the left, such that (letting $\lambda \in \Lambda$ be the element corresponding to $C_{p,q}^{\sigma}$) we have:
\begin{enumerate}
    \item \label{item:dl1} $\lambda \leq \lambda_1$, and $\lambda \leq \lambda_2$ 
    \item \label{item:dl2} $\lambda$ and $\kappa$ depend only on $q_1$ and $p_2$
    \item \label{item:dl3} $\sigma$ depends only on $\sigma_1, q_1, p_2,$ and $\sigma_2$
    \item \label{item:dl4} $p$ depends only on $p_1,\sigma_1,q_1,$ and $p_2$.
    \item \label{item:dl5} If $\lambda = \lambda_2$, then $q = q_2$, and $\sigma \sigma_2^{-1}$ depends only on $\sigma_1, q_1, $ and $p_2$.
\end{enumerate}
\end{definition}

We will call Conditions (\ref{item:dl1}) - (\ref{item:dl5}) the \emph{dependency conditions}.


\begin{proposition} \label{prop:dgrmLikeIsWossit} A diagram-like algebra is in particular a {\wossit} algebra.
\end{proposition}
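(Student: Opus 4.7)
The plan is to verify axiom \ref{item:W3}; axioms \ref{item:W1} and \ref{item:W2} are part of the definition of a diagram-like algebra and so are already in place. By $R$-linearity of \ref{item:W3} in $a$, it suffices to check the axiom when $a = C_{p_1,q_1}^{\sigma_1}$ is a single basis element, so I would fix such an $a$, fix $\lambda \in \Lambda$, $\sigma \in G(\lambda)$, and $p,q \in M(\lambda)$, and apply the diagram-like product formula to write
$$C_{p_1,q_1}^{\sigma_1}\,C_{p,q}^{\sigma} \;=\; \kappa\, C_{p',q'}^{\sigma'}$$
for some $\lambda' \in \Lambda$, $p',q' \in M(\lambda')$, $\sigma' \in G(\lambda')$, and $\kappa \in R$ produced by the product.

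The next step is to split into two cases according to whether $\lambda' = \lambda$ or $\lambda' < \lambda$ (by dependency condition (\ref{item:dl1}) these are the only possibilities, and by (\ref{item:dl2}) the value of $\lambda'$ depends only on $q_1$ and $p$, hence only on $a$ and $p$). When $\lambda' < \lambda$, the product lies in $I_{<\lambda}$ and contributes $0$ modulo $I_{<\lambda}$, so I would simply set $r_a(p'',\eta,p) = 0$ for all $p''$ and $\eta$. When $\lambda' = \lambda$, condition (\ref{item:dl5}) forces $q' = q$ and says that $\sigma'\sigma^{-1}$ depends only on $\sigma_1, q_1, p$, i.e.\ only on $a$ and $p$; call this element $\tau$. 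Combined with (\ref{item:dl2}) and (\ref{item:dl4}), both $\kappa$ and $p'$ also depend only on $a$ and $p$. So the product simplifies to $\kappa\, C_{p',q}^{\tau\sigma}$, with $\kappa$, $p'$, and $\tau$ all functions of $a$ and $p$ alone.

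I would then define
$$r_a(p'',\eta,p) \;=\; \begin{cases} \kappa & \text{if } p'' = p' \text{ and } \eta = \tau, \\ 0 & \text{otherwise,} \end{cases}$$
and verify that the sum $\sum_{\sigma'' \in G(\lambda),\, p'' \in M(\lambda)} r_a(p'',\sigma''\sigma^{-1},p)\,C_{p'',q}^{\sigma''}$ collapses to the single term $\kappa\, C_{p',q}^{\tau\sigma}$, matching the product. The only thing that needs to be checked is that $r_a$ really depends only on $a$, $p''$, $p$, and $\eta = \sigma''\sigma^{-1}$, not on $\sigma$ or $q$; this is precisely the content of condition (\ref{item:dl5}) — it is what prevents $\sigma'$ itself (which does depend on $\sigma$) from appearing in the definition of $r_a$.

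The main obstacle, if any, is essentially bookkeeping: making sure that the asymmetry in the dependency conditions — in particular the distinction in (\ref{item:dl5}) between $\sigma'$ and $\sigma'\sigma^{-1}$ — is correctly exploited so that the coefficient $r_a$ has the exact functional dependence demanded by \ref{item:W3}. Once this is set up, everything follows directly from the five dependency conditions and the definition of $I_{<\lambda}$.
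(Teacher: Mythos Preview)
Your proposal is correct and follows essentially the same approach as the paper's proof: both reduce to basis elements $a = C_{p_1,q_1}^{\sigma_1}$, split according to whether the resulting $\lambda'$ equals $\lambda$ or is strictly smaller, and use dependency condition (\ref{item:dl5}) in the crucial step to ensure that only the combination $\sigma'\sigma^{-1}$ (your $\tau$) enters the coefficient. The paper phrases this via indicator functions, writing the coefficient as $\overline{\kappa}(q_1,p)\chi_{\sigma'}(\sigma'')\chi_{p'}(p'')$ and then using $\chi_{\sigma'}(\sigma'') = \chi_{\sigma'\sigma^{-1}}(\sigma''\sigma^{-1})$, but this is exactly your case-based definition of $r_a$ in different clothing.
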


\begin{proof} The two definitions differ only in the replacement of the third axiom (\ref{item:W3}) defining a {\wossit} algebra by the condition of Definition \ref{def:dgrmLike}. It therefore suffices to show that, together with the first two axioms defining a {\wossit} algebra, this condition implies the third axiom.

The algebra multiplication is $R$-bilinear, so in particular it is $R$-linear in the first variable, and $a \in A$ is of course uniquely expressible in the basis. It therefore suffices to verify this third axiom in the case that $a = C_{p_1,q_1}^{\sigma_1}$ is a basis element.

Thus, consider a product $a C_{p_2,q_2}^{\sigma_2} = C_{p_1,q_1}^{\sigma_1} C_{p_2,q_2}^{\sigma_2}$.

The idea is to transform the problem into one about indicator functions. Definition \ref{def:dgrmLike} says first of all that $$a C_{p_2,q_2}^{\sigma_2} = C_{p_1,q_1}^{\sigma_1} C_{p_2,q_2}^{\sigma_2} = \kappa C_{p,q}^{\sigma}.$$

By Dependency Condition (\ref{item:dl1}) of the condition, the product lies in $I_{\leq \lambda_2}$, so the above equation may be rewritten as $$C_{p_1,q_1}^{\sigma_1} C_{p_2,q_2}^{\sigma_2} = \sum_{\lambda' \leq \lambda_2} \sum_{\substack{\sigma' \in G(\lambda') \\ p' \in M(\lambda')}} \kappa(q_1,p_2)\chi_{\sigma}(\sigma')\chi_p(p') C_{p',q_2}^{\sigma'}$$ where $\chi_\sigma$ and $\chi_p$ are the indicator functions of $\sigma$ and $p$ respectively.

By Dependency Condition (\ref{item:dl2}), there exists a function $\overline{\kappa}:M(\lambda_1) \times M(\lambda_2) \to R$ defined by setting $\overline{\kappa}(q_1,p_2) = \begin{cases} \kappa & \lambda = \lambda_2 \\
0 & \lambda < \lambda_2.
\end{cases}$

Modulo $I(< \lambda_2)$, we may therefore write:
$$a C_{p_2,q_2}^{\sigma_2} = C_{p_1,q_1}^{\sigma_1} C_{p_2,q_2}^{\sigma_2} \equiv \sum_{\substack{\sigma' \in G(\lambda_2) \\ p' \in M(\lambda_2)}} \overline{\kappa}(q_1,p_2)\chi_{\sigma}(\sigma')\chi_p(p') C_{p',q_2}^{\sigma'},$$ and what must be established is that this is of the correct form for Axiom \ref{item:W3}, namely that $\overline{\kappa}(q_1,p_2)\chi_{\sigma}(\sigma')\chi_p(p') \in R$ depends only on $a$, $\sigma'\sigma_2^{-1},$ $p_2$, and $p'$. It suffices to verify this factor-by factor.

First $\overline{\kappa}(q_1,p_2)$ depends only on $q_1$ and $p_2$ (Dependency Condition (\ref{item:dl2})) which is to say only on $a$ and $p_2$, as required. This implies the result when $\overline{\kappa}(q_1,p_2) = 0$, so it suffices to establish it in the case $\overline{\kappa}(q_1,p_2) \neq 0$, which means we may henceforth assume $\lambda=\lambda_2$.

On the face of it, $\chi_{\sigma}(\sigma')$ depends on $\sigma$ and $\sigma'$, but $\sigma$ is implicitly (Dependency Condition (\ref{item:dl3})) a function of $a$, $p_2$, and $\sigma_2$. We must show that this reduces the dependency of $\chi_{\sigma}(\sigma')$ on $\sigma$ and $\sigma'$ to a dependency on $a$, $p_2$, and $\sigma'\sigma_2^{-1}$. Using the fact that $\chi_{\sigma}$ is an indicator function on a group, we get $$\chi_{\sigma}(\sigma') = \chi_{\sigma}(\sigma'\sigma_2^{-1}\sigma_2) = \chi_{\sigma \sigma_2^{-1}}(\sigma'\sigma_2^{-1}).$$ By Dependency Condition (\ref{item:dl5}), $\sigma \sigma_2^{-1}$ depends only on $a$ and $p_2$, so, $\chi_{\sigma}(\sigma')$ depends only on $a$, $p_2$, and $\sigma'\sigma_2^{-1}$, as required.

Dependency Condition (\ref{item:dl4}) then immediately gives that $\chi_p(p')$ depends only on $a$, $p_2$, and $p'$, as required.
\end{proof}

\subsection{Restriction of link states and groups}

The following ideas will be useful for verifying examples.

\begin{definition} \label{def:restriction} Let $A$ be a diagram-like algebra, with {\wossit} datum $(\Lambda,G,M,C,*)$. A subalgebra $A'$ of $A$ will be said to be \emph{obtained by restriction} if for each $\lambda$ in $\Lambda$ there exists a (perhaps empty) subset $M'(\lambda)$ of $M(\lambda)$ and a subgroup $G'(\lambda)$ of $G(\lambda)$ such that \begin{align*}
      \coprod_{\lambda \in \Lambda} M'(\lambda) \times G'(\lambda) \times M'(\lambda) & \to A \\
      (p,\sigma,q) & \mapsto C_{p,q}^{\sigma}
\end{align*}
has image an $R$-basis of $A'$.
\end{definition}

If $G'(\lambda)=G(\lambda)$ for each $\lambda$, that is, if we have only restricted the set of available link states, then we will say that $A'$ is obtained from $A$ \emph{by restriction of link states}. Similarly, if $M'(\lambda)=M(\lambda)$ for each $\lambda$ then we will say that $A'$ is obtained from $A$ \emph{by restriction of groups}.

\begin{proposition} \label{prop:restriction} Let $A$ be a diagram-like algebra, with {\wossit} datum $(\Lambda,G,M,C,*)$. Let $A' \subset A$ be a subalgebra. If $A'$ is obtained by restriction in the sense of Definition \ref{def:restriction}, then $A'$ is a diagram-like algebra, with {\wossit} datum $(\Lambda\lvert_{M'},G',M',C,*)$, where $$\Lambda\lvert_{M'} = \{\lambda \in \Lambda \mid M'(\lambda) \neq \emptyset\},$$ and we abuse notation to identify $G',C$, and $*$ with appropriate restrictions. \end{proposition}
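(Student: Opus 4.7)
The plan is to verify the axioms of a diagram-like algebra for the tuple $(\Lambda\lvert_{M'}, G', M', C, *)$ one at a time. Most of the work is purely formal inheritance from $A$; the only place where the subalgebra hypothesis plays a substantive role is the multiplication condition.

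First I verify Axiom \ref{item:W1}. The poset $\Lambda\lvert_{M'}$ inherits its partial order from $\Lambda$; each $M'(\lambda)$ is finite because $M(\lambda)$ is; and each $G'(\lambda)$ is a group as a subgroup of $G(\lambda)$. The injectivity of the restricted indexing map, and the fact that its image is an $R$-basis of $A'$, are exactly what it means for $A'$ to be obtained by restriction in the sense of Definition \ref{def:restriction}.

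Next I verify Axiom \ref{item:W2}. For $p,q \in M'(\lambda)$ and $\sigma \in G'(\lambda)$, the identity $(C_{p,q}^{\sigma})^* = C_{q,p}^{\sigma^{-1}}$ expresses $(C_{p,q}^{\sigma})^*$ as a basis element of $A'$, since $q,p \in M'(\lambda)$ and $\sigma^{-1} \in G'(\lambda)$. By $R$-linearity, $*$ restricts to an $R$-linear anti-involution of $A'$ with the required action on basis elements.

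The main step is the multiplication condition of Definition \ref{def:dgrmLike}. Given basis elements $C_{p_1,q_1}^{\sigma_1}$ and $C_{p_2,q_2}^{\sigma_2}$ of $A'$, their product viewed in $A$ equals $\kappa C_{p,q}^{\sigma}$ with dependencies as in (\ref{item:dl1})--(\ref{item:dl5}). Because $A'$ is a subalgebra, this product lies in $A'$, and hence expands uniquely in the restricted basis. If $\kappa = 0$ there is nothing to check; otherwise, by uniqueness of the basis expansion in $A$, the element $C_{p,q}^{\sigma}$ must itself be a basis element of $A'$, which forces $p,q \in M'(\lambda)$ and $\sigma \in G'(\lambda)$, and in particular $M'(\lambda) \neq \emptyset$, so $\lambda \in \Lambda\lvert_{M'}$. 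The dependency conditions transfer verbatim from $A$, since they only assert that certain outputs are functions of certain subsets of the inputs, a property preserved under restriction. This closure argument is the only real obstacle, and it is clean once one notes that the subalgebra hypothesis is doing all the work.
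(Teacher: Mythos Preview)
Your proof is correct and follows essentially the same route as the paper's: both verify Axioms \ref{item:W1} and \ref{item:W2} by direct inheritance (using that $G'(\lambda)$ is a subgroup for closure under $*$), and both observe that the dependency conditions transfer directly once multiplicative closure of $A'$ guarantees that the product $\kappa C_{p,q}^{\sigma}$ lies in $A'$. Your argument is slightly more explicit about why $\kappa \neq 0$ forces $C_{p,q}^{\sigma}$ to be a restricted basis element (via uniqueness of the basis expansion), whereas the paper simply asserts that the dependency conditions hold in $A'$ since they held in $A$; the paper also separately dispatches the degenerate case $\Lambda\lvert_{M'} = \emptyset$, which you leave implicit.
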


Note that we are taking it as given that $A'$ is closed under the multiplication in $A$, but not necessarily that it is closed under the involution $*$.

\begin{proof} Either $\Lambda\lvert_{M'}$ is empty, in which case $A'=0$ and we are done, or it is non-empty, and inherits a poset structure as a subset of $\Lambda$. We must verify Axioms \ref{item:W1} and \ref{item:W2} of Definition \ref{algDef}, as well as the dependency conditions of Definition \ref{def:dgrmLike}.

Axiom \ref{item:W1} follows immediately from the definition of restriction (Definition \ref{def:restriction}). Axiom \ref{item:W2} holds (in the sense that the involution $*$ preserves $A'$), since the basis $C_{p,q}^{\sigma}$ of $A'$ is closed under interchanging $p$ and $q$, and under inversion of $\sigma$ (since each $G'(\lambda)$ is a subgroup of $G(\lambda)$). We have assumed that $A'$ is multiplicatively closed, and the dependency conditions hold in $A'$ since they held already in $A$. This completes the proof.
\end{proof}

\section{Examples} \label{section:Ex}

We will use the `priming/unpriming' convention of \cite[Definition 3.1]{Boyde2}. The sets that we will use as vertex labels will consist of a positive integer, together with some number of superscript `prime' markings. In particular, we write $\underline{n}$ for the set of symbols $\{1,2,\dots , n\}$, and $\underline{n}'$ for the set $\{1',2', \dots, n'\}$.

\begin{example} \label{ex:Brauer} Let $R$ be a commutative ring with unit, and let $\delta \in R$. We take the \emph{Brauer algebra} $\Brauer_n(\delta)$ \cite{Brauer} to be defined as having an $R$-basis consisting of partitions of the set $\underline{n} \cup \underline{n}'$ where each part contains two elements. As is standard (see e.g. \cite{GrahamLehrer} or \cite{BHP}), we will think of these basis elements as diagrams on the vertex set $\underline{n} \cup \underline{n}'$, where an edge is drawn between pairs of vertices which lie in the same part of the partition. The multiplication is given by `concatenation of diagrams and replacement of loops by factors of $\delta$' (c.f. \cite{GrahamLehrer,BHP}). \end{example}

\begin{remark} \label{rmk:NotTheSame}Graham and Lehrer show \cite[Theorem 4.10]{GrahamLehrer} that the the Brauer algebra $\Brauer_n(\delta)$ is cellular. It then follows from Proposition \ref{prop:cellIsWossit} that $\Brauer_n(\delta)$ is {\wossit}. We will work with a more elementary {\wossit} structure. For the reader familiar with \cite{GrahamLehrer}, our basis elements $C_{p,q}^{\sigma}$ are what Graham and Lehrer would denote $[p,q,\sigma]$ (though they prefer to use different letters, as in $[S,T,w]$).
\end{remark}

\begin{definition} \label{def:BrauerDGL}
Formally, we assign a datum $(\Lambda,G,M,C,*)$ (which is independent of $\delta$, and will be seen to make $\Brauer_n(\delta)$ into a diagram-like algebra) as follows:
\begin{itemize}
    \item $\mathbf{\Lambda:}$ Let $\Lambda = \{t \in \underline{n}_0 \mid  n-t \textrm{ is even} \}$.
    \item $\mathbf{G:}$ For each $t \geq 1 \in \Lambda$, let $G(t)=\Sigma_t$ be the symmetric group of order $t$. We regard $\Sigma_0$ as the trivial group, i.e. as self-bijections of the empty set $\emptyset$.
    \item $\mathbf{M:}$ Let $M(t)$ be the \emph{Brauer link states on $\underline{n}$ with $t$ defects}: these are partitions of $\underline{n}$ into parts of cardinality either 1 or 2. The parts of cardinality 1 (which may be identified with their single element) will be called \emph{defects} or \emph{defect vertices}.
    \item $\mathbf{C:}$ Let $t \in \Lambda$. For $t \geq 0$, $\sigma \in \Sigma_t$, and $p,q \in M(t)$ the basis element $C_{p,q}^\sigma$ is the unique partition of $\underline{n} \cup \underline{n}'= \{1, \dots, n,1', \dots n'\}$ such that:
    \begin{itemize}
        \item The restriction of $C_{p,q}^\sigma$ to $\underline{n} = \{1, \dots , n\}$ is $p$.
        \item The restriction of $C_{p,q}^\sigma$ to $\underline{n}' = \{1', \dots n'\}$ is the partition $q'$ which corresponds to $q$ under the priming/unpriming bijection $\{1, \dots , n\} \cong \{1', \dots, n'\}$.
        \begin{itemize}
            \item If $t=0$, then ($G(t)$ is the trivial group, and) $C_{p,q}^1$ has no parts containing both an element of $\underline{n}$ and $\underline{n}'$.
        \item If $t \geq 1$, then the partition $C_{p,q}^\sigma$ will have $t$ parts containing both an element of of $\underline{n}$ and an element of $\underline{n}'$, as follows. The $t$ defects in $p$ are a well-ordered subset of  $\underline{n}$, and this defines a bijection with $\underline{t}$. The vertex mapped to $i \in \underline{t}$ by this correspondence will be called the $i$-th \emph{$i$-th defect}. Similarly, we put the parts of $q'$ having a defect in bijection with $\underline{t}'$. The partition $C_{p,q}^\sigma$ consists of the cardinality 2 parts of $p$ and $q'$, together with the unions of the $i'-th$ defect vertex of $q'$ with the $\sigma(i)$-th defect vertex of $p$ for each $i$.
        \end{itemize}
    \end{itemize}
    \item $\mathbf{*:}$ The anti-involution $*$ is the unique bijection which interchanges $\underline{n}$ and $\underline{n}'$, and preserves the order of each of these subsets individually.
\end{itemize}
\end{definition}

The following definition will be helpful.

\begin{definition} \label{def:pairGraph} Let $q,p$ be Brauer link states with $t_1$ and $t_2$ defects respectively. The \emph{pair graph} $\Gamma_{q,p}$ is the graph with vertices $\underline{n}$, an edge (thought of as `on the left') connecting each pair of vertices which are connected in $q$, and an edge (thought of as `on the right') connecting each pair of vertices which are connected in $p$. The \emph{pair set} $S_{\langle q,p \rangle}$ is the subset of the path components $\pi_0(\Gamma_{\langle q,p \rangle})$ given by the intersection of the maps $\underline{t}_1 \to \Gamma_{\langle q,p \rangle} \to \pi_0(\Gamma_{\langle q,p \rangle})$ and $\underline{t}_1 \to \pi_0(\Gamma_{\langle q,p \rangle})$ marking the defects. In other words, $S_{\langle q,p \rangle}$ is the set of path components in the pair graph which contain a defect vertex from $q$ and a defect vertex from $p$. \end{definition}

\begin{example} Taking $n=11$, if \begin{center}
$q =$
    \quad
\begin{tikzpicture}[x=1.5cm,y=-.5cm,baseline=-1.05cm]

\def\wid{2}
\def\hei{0.5}
\def\nodesize{3}
\def\ang{90}

\node[v, minimum size=\nodesize] (c1) at (2* \wid,0*\hei) {};
\node[v, minimum size=\nodesize] (c2) at (2* \wid,1*\hei) {};
\node[v, minimum size=\nodesize] (c3) at (2* \wid,2*\hei) {};
\node[v, minimum size=\nodesize] (c4) at (2* \wid,3*\hei) {};
\node[v, minimum size=\nodesize] (c5) at (2* \wid,4*\hei) {};
\node[v, minimum size=\nodesize] (c6) at (2* \wid,5*\hei) {};
\node[v, minimum size=\nodesize] (c7) at (2* \wid,6*\hei) {};
\node[v, minimum size=\nodesize] (c8) at (2* \wid,7*\hei) {};
\node[v, minimum size=\nodesize] (c9) at (2* \wid,8*\hei) {};
\node[v, minimum size=\nodesize] (c10) at (2* \wid,9*\hei) {};
\node[v, minimum size=\nodesize] (c11) at (2* \wid,10*\hei) {};

\draw[e] (c1) to[out=180, in=180] (c3);
\draw[e] (c2) to[out=180, in=180] (c5);
\draw[e] (c7) to[out=180, in=180] (c8);
\draw[e] (c6) to[out=180, in=180] (c10);

\draw[e] (c4) to [out=180,in=0] (1.5*\wid,3*\hei);
\draw[e] (c9) to [out=180,in=0] (1.5*\wid,8*\hei);
\draw[e] (c11) to [out=180,in=0] (1.5*\wid,10*\hei);

\end{tikzpicture}
\quad $\in M(3),$ \textrm{ and } $p =$
    \quad
\begin{tikzpicture}[x=1.5cm,y=-.5cm,baseline=-1.05cm]

\def\wid{2}
\def\hei{0.5}
\def\nodesize{3}
\def\ang{90}

\node[v, minimum size=\nodesize] (c1) at (2* \wid,0*\hei) {};
\node[v, minimum size=\nodesize] (c2) at (2* \wid,1*\hei) {};
\node[v, minimum size=\nodesize] (c3) at (2* \wid,2*\hei) {};
\node[v, minimum size=\nodesize] (c4) at (2* \wid,3*\hei) {};
\node[v, minimum size=\nodesize] (c5) at (2* \wid,4*\hei) {};
\node[v, minimum size=\nodesize] (c6) at (2* \wid,5*\hei) {};
\node[v, minimum size=\nodesize] (c7) at (2* \wid,6*\hei) {};
\node[v, minimum size=\nodesize] (c8) at (2* \wid,7*\hei) {};
\node[v, minimum size=\nodesize] (c9) at (2* \wid,8*\hei) {};
\node[v, minimum size=\nodesize] (c10) at (2* \wid,9*\hei) {};
\node[v, minimum size=\nodesize] (c11) at (2* \wid,10*\hei) {};

\draw[e] (c2) to[out=180, in=180] (c6);
\draw[e] (c5) to[out=180, in=180] (c4);
\draw[e] (c7) to[out=180, in=180] (c8);
\draw[e] (c9) to[out=180, in=180] (c1);
\draw[e] (c3) to[out=180, in=180] (c11);

\draw[e] (c10) to [out=180,in=0] (1.5*\wid,9*\hei);

\end{tikzpicture}
\quad $\in M(1),$ then
\end{center}

\begin{center}
$\Gamma_{\langle q,p \rangle} =$
    \quad
\begin{tikzpicture}[x=1.5cm,y=-.5cm,baseline=-1.05cm]

\def\wid{2}
\def\hei{0.5}
\def\nodesize{3}
\def\ang{90}

\node[v, minimum size=\nodesize] (c1) at (2* \wid,0*\hei) {};
\node[v, minimum size=\nodesize] (c2) at (2* \wid,1*\hei) {};
\node[v, minimum size=\nodesize] (c3) at (2* \wid,2*\hei) {};
\node[v, minimum size=\nodesize] (c4) at (2* \wid,3*\hei) {};
\node[v, minimum size=\nodesize] (c5) at (2* \wid,4*\hei) {};
\node[v, minimum size=\nodesize] (c6) at (2* \wid,5*\hei) {};
\node[v, minimum size=\nodesize] (c7) at (2* \wid,6*\hei) {};
\node[v, minimum size=\nodesize] (c8) at (2* \wid,7*\hei) {};
\node[v, minimum size=\nodesize] (c9) at (2* \wid,8*\hei) {};
\node[v, minimum size=\nodesize] (c10) at (2* \wid,9*\hei) {};
\node[v, minimum size=\nodesize] (c11) at (2* \wid,10*\hei) {};

\draw[e] (c1) to[out=180, in=180] (c3);
\draw[e] (c2) to[out=180, in=180] (c5);
\draw[e] (c7) to[out=180, in=180] (c8);
\draw[e] (c6) to[out=180, in=180] (c10);

\draw[e] (c4) to [out=180,in=0] (1.5*\wid,3*\hei);
\draw[e] (c9) to [out=180,in=0] (1.5*\wid,8*\hei);
\draw[e] (c11) to [out=180,in=0] (1.5*\wid,10*\hei);

\draw[e] (c2) to[out=0, in=0] (c6);
\draw[e] (c5) to[out=0, in=0] (c4);
\draw[e] (c7) to[out=0, in=0] (c8);
\draw[e] (c9) to[out=0, in=0] (c1);
\draw[e] (c3) to[out=0, in=0] (c11);

\draw[e] (c10) to [out=0,in=180] (2.5*\wid,9*\hei);

\end{tikzpicture}
\quad
\end{center}
and $S_{\langle q,p \rangle} \subset \pi_0(\Gamma_{\langle q,p \rangle})$ consists of the single indicated (dashed) path component:
\begin{center}
\quad
\begin{tikzpicture}[x=1.5cm,y=-.5cm,baseline=-1.05cm]

\def\wid{2}
\def\hei{0.5}
\def\nodesize{3}
\def\ang{90}

\node[v, minimum size=\nodesize] (c1) at (2* \wid,0*\hei) {};
\node[v, minimum size=\nodesize] (c2) at (2* \wid,1*\hei) {};
\node[v, minimum size=\nodesize] (c3) at (2* \wid,2*\hei) {};
\node[v, minimum size=\nodesize] (c4) at (2* \wid,3*\hei) {};
\node[v, minimum size=\nodesize] (c5) at (2* \wid,4*\hei) {};
\node[v, minimum size=\nodesize] (c6) at (2* \wid,5*\hei) {};
\node[v, minimum size=\nodesize] (c7) at (2* \wid,6*\hei) {};
\node[v, minimum size=\nodesize] (c8) at (2* \wid,7*\hei) {};
\node[v, minimum size=\nodesize] (c9) at (2* \wid,8*\hei) {};
\node[v, minimum size=\nodesize] (c10) at (2* \wid,9*\hei) {};
\node[v, minimum size=\nodesize] (c11) at (2* \wid,10*\hei) {};

\draw[e] (c1) to[out=180, in=180] (c3);
\draw[e,dashed] (c2) to[out=180, in=180] (c5);
\draw[e] (c7) to[out=180, in=180] (c8);
\draw[e,dashed] (c6) to[out=180, in=180] (c10);

\draw[e,dashed] (c4) to [out=180,in=0] (1.5*\wid,3*\hei);
\draw[e] (c9) to [out=180,in=0] (1.5*\wid,8*\hei);
\draw[e] (c11) to [out=180,in=0] (1.5*\wid,10*\hei);

\draw[e,dashed] (c2) to[out=0, in=0] (c6);
\draw[e,dashed] (c5) to[out=0, in=0] (c4);
\draw[e] (c7) to[out=0, in=0] (c8);
\draw[e] (c9) to[out=0, in=0] (c1);
\draw[e] (c3) to[out=0, in=0] (c11);

\draw[e,dashed] (c10) to [out=0,in=180] (2.5*\wid,9*\hei);

\end{tikzpicture}
\quad
\end{center}
\end{example}

\begin{proposition} \label{prop:BrauerDGL} With the datum of Definition \ref{def:BrauerDGL}, the Brauer algebra $\Brauer_n(\delta)$ is diagram-like. In particular, given elements $C_{p_1,q_1}^{\sigma_1}$ associated to $t_1 \in \Lambda$ and $C_{p_2,q_2}^{\sigma_2}$ associated to $t_2 \in \Lambda$, we have
$$C_{p_1,q_1}^{\sigma_1} C_{p_2,q_2}^{\sigma_2} = \delta^i C_{p,q}^{\sigma},$$
where 
\begin{itemize}
    \item $i$ is the number of loops in the pair graph $\Gamma_{\langle q_1, p_2 \rangle}$, and
    \item the element $t \in \Lambda$ associated to the product $C_{p,q}^{\sigma}$ is equal to $t_2$ if and only if the cardinality of the pair-set $S_{\langle q_1, p_2 \rangle}$ is $t_2$.
\end{itemize}
 \end{proposition}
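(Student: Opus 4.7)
The plan is to compute the product $C_{p_1,q_1}^{\sigma_1} C_{p_2,q_2}^{\sigma_2}$ directly from the diagrammatic definition of Brauer multiplication and then read off both the explicit formula and the dependency conditions of Definition \ref{def:dgrmLike}. Axioms \ref{item:W1} and \ref{item:W2} are immediate from the construction of Definition \ref{def:BrauerDGL}, since each Brauer diagram decomposes uniquely as a triple $(p,\sigma,q)$ by restricting to $\underline{n}$, restricting to $\underline{n}'$, and reading off the defect-matching permutation, and the involution $*$ clearly exchanges the roles of $p$ and $q$ while inverting $\sigma$.

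The key observation is that stacking the two diagrams---with the right half of the first glued to the left half of the second---produces in the middle column exactly the pair graph $\Gamma_{\langle q_1, p_2 \rangle}$: the right half of the first diagram contributes the edges of $q_1$ together with attachments of its defects to $p_1$-defects via $\sigma_1$, while the left half of the second contributes the edges of $p_2$ with attachments of its defects to $q_2$-defects via $\sigma_2$. Each vertex of $\Gamma_{\langle q_1, p_2 \rangle}$ has degree at most $2$, so its components are cycles or (possibly trivial) paths, and any path endpoint is necessarily a defect of $q_1$ or of $p_2$. Cycles become closed loops, producing the factor $\delta^i$. Paths whose endpoints are both defects of $q_1$ give rise to new edges of $p$, traced back through $\sigma_1$ to $p_1$-vertices; paths whose endpoints are both defects of $p_2$ give new edges of $q$; and the bridging paths---those having one endpoint of each kind, including single-vertex components whose sole vertex is a defect of both---which are precisely the members of $S_{\langle q_1, p_2 \rangle}$, become the through-strands of the product. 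This forces $t = |S_{\langle q_1, p_2 \rangle}|$, and the permutation $\sigma$ is the natural composition of $\sigma_1$, the bridge matching, and $\sigma_2$.

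With this description in hand, the dependency conditions can be checked in order. Since each bridge consumes a defect of both $q_1$ and $p_2$, we have $t \leq \min(t_1,t_2)$, giving (\ref{item:dl1}); both the loop count $i$ and the through-strand count $t$ depend only on the pair graph, giving (\ref{item:dl2}). The explicit descriptions of $\sigma$ and $p$ above establish (\ref{item:dl3}) and (\ref{item:dl4}). For (\ref{item:dl5}), if $t = t_2$ then every defect of $p_2$ lies on a bridge, so no defect of $q_2$ is absorbed into a right-pair edge, forcing $q = q_2$; the $\sigma_2$-factor of $\sigma$ then peels off, and $\sigma \sigma_2^{-1}$ depends only on $\sigma_1$, $q_1$, and $p_2$. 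The final ``if and only if'' statement of the proposition is then immediate from the general identity $t = |S_{\langle q_1, p_2 \rangle}|$.

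The principal subtlety will be verifying (\ref{item:dl5}): one must track the re-indexing of defects by $\underline{t_i}$ through the pair graph carefully enough to see that stripping the $\sigma_2$ factor genuinely eliminates all dependence on $\sigma_2$ from the composite $\sigma$. The remaining ingredients are essentially a routine unpacking of the diagrammatic definitions.
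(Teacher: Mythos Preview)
Your argument is correct: the direct analysis of the pair graph $\Gamma_{\langle q_1,p_2\rangle}$ does yield the product formula and all five dependency conditions, and you have correctly flagged the one genuinely delicate point, namely that in verifying (\ref{item:dl5}) the re-indexing of the surviving $p_1$-defects as $p$-defects is determined by \emph{which} indices in $\underline{t_1}$ survive (hence by $\sigma_1,q_1,p_2$) and not by the actual positions of those defects in $\underline{n}$ (which would drag in a dependence on $p_1$).

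The paper, however, takes a quite different route: rather than carrying out this diagrammatic computation from scratch, it simply cites Graham and Lehrer \cite{GrahamLehrer}. Axiom \ref{item:W1} is their Proposition~4.4; the dependency conditions (\ref{item:dl1})--(\ref{item:dl4}) are read off from their Remark~4.6(iii) and the displayed equations in their Proposition~4.7; and (\ref{item:dl5}) is obtained by specialising those same equations to the case $\lambda=\lambda_2$ and observing that the restriction maps in Graham--Lehrer's formulas become identities. Your approach is self-contained and makes the combinatorics transparent, which is arguably better pedagogy; the paper's approach is shorter and deliberately ties the result into Graham--Lehrer's existing notation, consistent with the paper's overall aim of connecting the topological story to their representation-theoretic framework.
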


\begin{proof} The ingredients we need are all proven by Graham and Lehrer (in different notation). They show \cite[Proposition 4.4]{GrahamLehrer} that the $C_{p,q}^{\sigma}$ do indeed form an $R$-module basis of $\Brauer_n(\delta)$ (which is the canonical basis used to define $\Brauer_n(\delta)$) verifying Condition (\ref{item:W1}) of Definition \ref{algDef}.

The first bullet point above then follows immediately from the definition of the Brauer algebras. Likewise, Condition (\ref{item:W2}) follows immediately from the definitions, and the dependency conditions of Definition \ref{def:dgrmLike} are verified by Graham and Lehrer in different notation \cite[Proposition 4.7]{GrahamLehrer}. Precisely, it follows from the definition of the Brauer algebra that a product of two basis elements takes the form:
$$C_{p_1,q_1}^{\sigma_1} C_{p_2,q_2}^{\sigma_2} = \kappa C_{p,q}^{\sigma},$$
and
\begin{itemize}
    \item Conditions (\ref{item:dl1}) and (\ref{item:dl2}) follows from \cite[Remark 4.6iii]{GrahamLehrer}.
    \item Conditions (\ref{item:dl3}) and (\ref{item:dl4}) follow respectively from the third and second displayed equations in \cite[Proposition 4.7]{GrahamLehrer}. The second bullet point above follows likewise from the third equation.
    \item Condition (\ref{item:dl5}) follows from the same two displayed equations: specifically, if $\lambda = \lambda_2$, then (in Graham and Lehrer's notation) then the restriction functions become the identity, the second displayed equation becomes $$ S_2'' = S_2',$$ (so $q = q_2$), and the third displayed equation becomes $$w'' = w \lvert_{T_{S_2}(S_2,S_1')} w(S_2,S_1') w',$$ so $$w'' (w')^{-1} = w \lvert_{T_{S_2}(S_2,S_1')} w(S_2,S_1'),$$ so $\sigma \sigma_2^{-1} = w'' (w')^{-1}$ depends only on $q_1=S_2$, $p_2=S_1'$, and $\sigma_1=w$, as required.
\end{itemize}
This verifies the conditions of Definition \ref{def:dgrmLike}, and completes the proof. \end{proof}

Recall from e.g. \cite{Jones, GrahamLehrer} that the Jones annular algebra $\Jones_n(\delta)$ is the subalgebra of $\Brauer_n(\delta)$ spanned by diagrams `which can be embedded on the cylinder without overlapping edges'. We will freely conflate the vertex set $\underline{n}$ with the cyclic group $C_n$.

The following definition is due to Graham and Lehrer \cite[Lemma 6.12]{GrahamLehrer}.

\begin{definition} \label{def:AnnularLS} Let $p$ be a Brauer link state, that is to say, a partition of $\underline{n}$ into subsets of cardinality 1 or 2, and that the defect parts are precisely those of cardinality 1. We will say that $p$ is \emph{annular} if whenever $i$ and $j$ are connected by an edge in $p$, we have:
    \begin{itemize}
        \item No edge in $p$ connects a vertex from the cyclic interval $(i,j)$ to one from $(j,i)$. In terms of partitions, $(i,j)$ and $(j,i)$ are unions of parts of $p$.
        \item Either all defects of $p$ are contained in $(i,j)$, or all defects of $p$ are contained in $(j,i)$.
    \end{itemize}
    
\end{definition}

\begin{example} \label{ex:Jones} As in \cite{Boyde2}, by the \emph{cyclic interval} $[i,j]$ in the cyclic group $C_n$ we mean the set $\{i,i+1,i+2, \dots, j \}$. Open cyclic intervals are defined similarly. For example, we then have that $[i,j]$ and $(j,i)$ are disjoint, and that their union is $C_n$. In what follows, our presentation will differ from Graham and Lehrer's because we speak in cyclic intervals.

Graham and Lehrer \cite[Proposition 6.14]{GrahamLehrer} show (with different terminology) that annular partitions are precisely those $\rho = C_{p,q}^{\sigma}$ (corresponding to $t \in \Lambda$) for which:
\begin{itemize}
    \item $\sigma \in C_t \leq \Sigma_t$ is an element of the cyclic group of order $t$, and
    \item $p$ and $q$ are annular link states in the sense of Definition \ref{def:AnnularLS}.
\end{itemize}

These results of Graham and Lehrer show that $\Jones_n(\delta)$ is obtained from $\Brauer_n(\delta)$, by restriction (Definition \ref{def:restriction}) of both link states and groups.

It follows by Proposition \ref{prop:restriction} that $\Jones_n(\delta)$ is a diagram-like algebra. Its {\wossit} datum coincides with that of $\Brauer_n(\delta)$ except that:
\begin{itemize}
    \item For $t \in \Lambda  = \{t \in \underline{n}_0 \mid n-t \textrm{ is even} \}$, we set $G(t) = C_t$, the cyclic group of order $t$ (with the convention that $C_0$ is the trivial group).
    \item For $t \in \Lambda$, $M(t)$ is the set of annular link states with $t$ defects.
\end{itemize}
\end{example}

Again, the following definition is due to Graham and Lehrer \cite[Lemma 6.2]{GrahamLehrer}.

\begin{definition} \label{def:PlanarLS} We will say that a Brauer link state $p$ is \emph{planar} if whenever $i<j$ and $i,j$ are connected in $p$, we have:
\begin{itemize}
    \item If $k, \ell$ lie in the same part of $p$ and $k$ lies in the closed interval $[i,j]$, then $\ell$ also lies in $[i,j]$. In terms of partitions, $[i,j]$ is a union of parts of $p$, and
    \item the closed interval $[i,j]$ contains no defects.
\end{itemize}
\end{definition}

\begin{example} \label{ex:TL} We take the \emph{Temperley-Lieb algebra} $\TL_n(\delta)$ to be a subalgebra of $\Brauer_n(\delta)$, defined identically to the Jones annular algebra, except that diagrams are now required to have planar representatives in the square $[0,1] \times [0,1]$ when $i$ is embedded as $(\frac{i-1}{n-1},0)$ and $i'$ is embedded as $(\frac{i-1}{n-1},1)$. These algebras were originally defined by Temperley and Lieb \cite{TemperleyLieb}, and their diagrammatic interpretation is due to Kauffman \cite{Kauffman}.

Graham and Lehrer show \cite[Proposition 6.5ii]{GrahamLehrer} that the Temperley-Lieb algebra has a basis consisting of those partitions $\rho = C_{p,q}^{\sigma}$ for which $\sigma = \mathrm{id}$ is the identity permutation, and $p$ and $q$ are \emph{planar} link states.

As usual, this implies that $\TL_n(\delta)$ is diagram-like (obtained by restriction from $\Jones_n(\delta)$), with diagram-like data which coincides with that of $\Brauer_n(\delta)$ except that:
\begin{itemize}
    \item Each $G(t)$ is trivial, and
    \item $M(t)$ is the set of planar link states with $t$ defects.
\end{itemize}
\end{example}

\section{Basic properties} \label{section:BasicProperties}

\subsection{Twosided Ideals}

Throughout this subsection, to clean up the statements, $A$ will be a fixed {\wossit} algebra, with {\wossit} datum $(\Lambda, G, M, C, *)$.

\begin{definition} \label{def:I} Let $X \subset \Lambda$. Define
$$I_X := \Span_R\{C_{p,q}^{\sigma} \mid \mu \in X, \sigma \in g(\mu), p,q \in M(\mu)\}.$$
In particular, given $\lambda \in \Lambda$, write $$I_{\leq \lambda} := \Span_R\{C_{p,q}^{\sigma} \mid \mu \leq \lambda, \sigma \in g(\mu), p,q \in M(\mu)\},$$
and
$$I_{< \lambda} := \Span_R\{C_{p,q}^{\sigma} \mid \mu < \lambda, \sigma \in g(\mu), p,q \in M(\mu)\}.$$
\end{definition}

The following lemma is an immediate consequence of Axiom \ref{item:W2}.

\begin{lemma} \label{lem:IStar} Let $X \subset \Lambda$ be any subset. Then $(I_X)^*=I_X$. \qed
\end{lemma}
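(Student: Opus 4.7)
The plan is to verify the claim directly on the spanning set $\{C_{p,q}^{\sigma}\}$ defining $I_X$, and then transfer to the whole module by $R$-linearity of $*$.

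First I would unpack the definitions: by Definition \ref{def:I}, $I_X$ is the $R$-span of those basis elements $C_{p,q}^{\sigma}$ with $\mu \in X$, $\sigma \in G(\mu)$, and $p, q \in M(\mu)$. Axiom \ref{item:W2} tells us that $(C_{p,q}^{\sigma})^* = C_{q,p}^{\sigma^{-1}}$. Since $G(\mu)$ is a group, $\sigma^{-1} \in G(\mu)$, and of course $q, p \in M(\mu)$ with the same $\mu \in X$, so $C_{q,p}^{\sigma^{-1}}$ is again among the spanning elements of $I_X$. Hence $*$ sends the given spanning set of $I_X$ into itself.

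Since $*$ is $R$-linear, this gives $(I_X)^* \subseteq I_X$. To obtain equality, I would simply apply $*$ again: $*$ is an anti-involution, so $** = \mathrm{id}$, and therefore $I_X = ((I_X)^*)^* \subseteq (I_X)^*$. Combined with the previous inclusion, this gives $(I_X)^* = I_X$.

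There is no real obstacle here; the argument is essentially immediate from Axiom \ref{item:W2} and the closure of each $G(\mu)$ under inversion. The only thing to be mildly careful about is that one verifies the containment on a spanning set rather than only on arbitrary elements, and then invokes $R$-linearity of $*$ to conclude.
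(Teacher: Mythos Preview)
Your proof is correct and is exactly the argument the paper has in mind: the lemma is stated as an immediate consequence of Axiom~\ref{item:W2}, and you have simply spelled out that immediacy by checking the spanning set is preserved and invoking $R$-linearity together with $**=\mathrm{id}$.
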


A free $R$-module modulo a subset of a basis is again a free $R$-module on the complement of this subset. In our case, we have:

\begin{lemma} \label{lem:quotientBasis} Let $X \subset \Lambda$ be any subset, and let $\pi_X : A \to \faktor{A}{I_X}$ denote the projection. Then $\faktor{A}{I_X}$ is a free $R$-module with basis \[\pushQED{\qed} 
\{\pi_X(C_{p,q}^{\sigma}) \mid \mu \not\in X, \sigma \in g(\mu), p,q \in M(\mu)\}. \qedhere
\popQED
\] \end{lemma}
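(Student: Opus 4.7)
The plan is to invoke the standard fact that a free $R$-module modulo the $R$-submodule generated by a subset of its basis is itself free, on the image of the complementary subset under the projection. By Axiom \ref{item:W1}, $A$ is a free $R$-module with basis
$$B := \{C_{p,q}^{\sigma} \mid \mu \in \Lambda,\ \sigma \in G(\mu),\ p,q \in M(\mu)\},$$
so the claim reduces to identifying $I_X$ as the $R$-span of a distinguished subset of $B$.

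This is immediate from Definition \ref{def:I}: by construction, $I_X$ is the $R$-span of
$$B_X := \{C_{p,q}^{\sigma} \mid \mu \in X,\ \sigma \in G(\mu),\ p,q \in M(\mu)\} \subset B.$$
Writing $B^c := B \setminus B_X$, the direct sum decomposition $A = \Span_R(B_X) \oplus \Span_R(B^c)$ induced by the basis $B$ restricts to $I_X = \Span_R(B_X)$, and $\pi_X$ restricts to an $R$-linear isomorphism $\Span_R(B^c) \xrightarrow{\cong} A/I_X$. Thus $\pi_X(B^c)$ is an $R$-basis for $A/I_X$, which is precisely the set described in the statement.

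There is essentially no obstacle; the only subtlety worth flagging is that the injectivity clause of Axiom \ref{item:W1} is what ensures that distinct triples $(p,\sigma,q)$ with $\mu \notin X$ index genuinely distinct elements $\pi_X(C_{p,q}^{\sigma})$, so the indexing set in the conclusion is in bijection with the resulting basis.
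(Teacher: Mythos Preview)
Your proof is correct and follows exactly the approach the paper takes: the paper states this lemma without proof (note the inline \qedhere), immediately after the remark that a free $R$-module modulo a subset of a basis is again free on the complement. Your argument simply spells out this standard fact using Axiom~\ref{item:W1} and Definition~\ref{def:I}, which is precisely what the paper has in mind.
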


Recall that a subset $X$ of a poset $\Lambda$ is said to be downward closed if whenever $\mu \leq \lambda$ and $\lambda \in X$ then $\mu \in X$.

\begin{lemma} \label{lem:IIdeal} Let $X \subset \Lambda$ be downward closed. Then $I_X$ is a twosided ideal of $A$. In particular, for each $\lambda \in \Lambda$, both $I_{\leq \lambda}$ and $I_{< \lambda}$ are twosided ideals.
\end{lemma}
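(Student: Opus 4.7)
The plan is to reduce to checking closure on basis elements, apply Axiom \ref{item:W3} to get left-ideal closure, then leverage the anti-involution to get right-ideal closure for free.

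First I would note that since $I_X$ is the $R$-span of the basis elements $C_{p,q}^{\sigma}$ with $\mu \in X$, and multiplication is $R$-bilinear, it suffices to show that $a \cdot C_{p,q}^{\sigma} \in I_X$ for any $a \in A$ and any basis element $C_{p,q}^{\sigma}$ with $\mu \in X$. For this I would invoke Axiom \ref{item:W3}, which gives
$$a C_{p,q}^{\sigma} \equiv \sum_{\substack{\sigma' \in G(\mu) \\ p' \in M(\mu)}} r_a(p',\sigma'\sigma^{-1},p)\, C_{p',q}^{\sigma'} \pmod{I_{<\mu}}.$$
The explicit sum on the right lies in $I_X$ because each term has type $\mu \in X$, and the error term lies in $I_{<\mu}$. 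Since $X$ is downward closed, every $\nu < \mu$ also lies in $X$, so $I_{<\mu} \subseteq I_X$. Combining these, $a \cdot C_{p,q}^{\sigma} \in I_X$, establishing that $I_X$ is a left ideal.

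For right multiplication, the slickest route is to use the anti-involution. By Lemma \ref{lem:IStar}, $(I_X)^* = I_X$, and Axiom \ref{item:W2} gives $(C_{p,q}^{\sigma})^* = C_{q,p}^{\sigma^{-1}}$, which again has type $\mu \in X$. For any $a \in A$, we then have
$$C_{p,q}^{\sigma} \cdot a \;=\; \bigl((C_{p,q}^{\sigma} \cdot a)^{*}\bigr)^{*} \;=\; \bigl(a^{*} \cdot C_{q,p}^{\sigma^{-1}}\bigr)^{*},$$
and the inner expression lies in $I_X$ by the left-ideal argument just given, hence the whole expression lies in $(I_X)^{*} = I_X$. (Alternatively, one could apply the version of \ref{item:W3} obtained in Remark \ref{rmk:W3} directly, with the same downward-closure argument.) Therefore $I_X$ is a twosided ideal.

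Finally, for the ``in particular'' clause, observe that for any $\lambda \in \Lambda$ the sets $\{\mu \in \Lambda \mid \mu \leq \lambda\}$ and $\{\mu \in \Lambda \mid \mu < \lambda\}$ are tautologically downward closed, so both $I_{\leq \lambda}$ and $I_{<\lambda}$ are twosided ideals by the main statement. I do not expect any real obstacle here: the entire argument rests on Axiom \ref{item:W3} producing a sum whose types are either equal to $\mu$ (hence in $X$) or strictly below $\mu$ (hence in $X$ by downward closure), together with the bookkeeping provided by $*$.
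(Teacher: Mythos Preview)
Your argument is correct and matches the paper's approach exactly: the paper's proof simply observes that downward closure gives $I_{<\mu}\subset I_X$ for $\mu\in X$, then invokes Axiom~\ref{item:W3} and its dual (Remark~\ref{rmk:W3}) --- precisely the left-ideal step plus anti-involution (or dual axiom) step you spelled out.
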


\begin{proof} Since $X \subset \Lambda$ is downward closed, we have that if $\mu \in X$ then $I_{< \mu} \subset I_X$. The result then follows from Axiom \ref{item:W3} and its dual, Remark \ref{rmk:W3}. \end{proof}

If $I_X$ is a twosided ideal of $A$, then the quotient $\faktor{A}{I_X}$ is again an $R$-algebra. In fact, it is again a {\wossit} algebra:

\begin{proposition} \label{prop:quotientWossit} Let $X \subset \Lambda$ be a downward closed proper subset. Then $\faktor{A}{I_X}$ is a {\wossit} algebra with {\wossit} datum $$(\Lambda \setminus X, G, M, \pi_X \circ C, *),$$ where $\pi_X$ is the projection $A \to \faktor{A}{I_X}$, and we abuse notation by still writing $G$, $M$, and $C$ for the restrictions of those assignments to $\Lambda \setminus X$, and by identifying $*$ with the induced map on the quotient.
\end{proposition}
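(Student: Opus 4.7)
The plan is to verify Axioms \ref{item:W1}, \ref{item:W2}, and \ref{item:W3} in turn for the proposed datum on $\faktor{A}{I_X}$, using the corresponding axioms for $A$ together with the structural lemmas already established. Throughout, write $\pi = \pi_X$ for the quotient map.

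First I would check \ref{item:W1}: this is essentially immediate from Lemma \ref{lem:quotientBasis}. That lemma says $\faktor{A}{I_X}$ is $R$-free with basis $\{\pi(C_{p,q}^\sigma) \mid \mu \notin X,\ \sigma \in G(\mu),\ p,q \in M(\mu)\}$, which is exactly what the datum $(\Lambda \setminus X, G, M, \pi \circ C, *)$ prescribes, the indexing set $\Lambda \setminus X$ inheriting its poset structure from $\Lambda$. Axiom \ref{item:W2} follows from Lemma \ref{lem:IStar}: since $I_X^* = I_X$, the map $*$ descends to a well-defined $R$-linear anti-involution on $\faktor{A}{I_X}$, and the identity $(\pi(C_{p,q}^\sigma))^* = \pi(C_{q,p}^{\sigma^{-1}})$ is simply the image of Axiom \ref{item:W2} for $A$.

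The substantive step is Axiom \ref{item:W3}. Given $\bar{a} \in \faktor{A}{I_X}$, choose any lift $a \in A$; for a basis element $C_{p,q}^\sigma$ with $\lambda \in \Lambda \setminus X$, apply \ref{item:W3} for $A$:
$$a C_{p,q}^\sigma \equiv \sum_{\substack{\sigma' \in G(\lambda) \\ p' \in M(\lambda)}} r_a(p',\sigma'\sigma^{-1},p)\, C_{p',q}^{\sigma'} \pmod{I_{<\lambda}}.$$
Apply $\pi$ to both sides. The key observation is that, since $X$ is downward closed, the image $\pi(I_{<\lambda})$ inside $\faktor{A}{I_X}$ is spanned by those $\pi(C_{p'',q''}^\tau)$ with $\mu < \lambda$ and $\mu \notin X$, which is precisely the ideal $I_{<\lambda}$ computed inside the proposed datum for the quotient. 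Thus the congruence pushes down with the correct meaning.

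The remaining point is that the coefficients $r_a$ descend to functions of $\bar{a}$, independent of the choice of lift. If $a_1 - a_2 \in I_X$, then $(a_1 - a_2)C_{p,q}^\sigma \in I_X$, and subtracting the two \ref{item:W3} expansions gives
$$\sum_{\substack{\sigma' \in G(\lambda) \\ p' \in M(\lambda)}} \bigl(r_{a_1} - r_{a_2}\bigr)(p',\sigma'\sigma^{-1},p)\, C_{p',q}^{\sigma'} \in I_X + I_{<\lambda}.$$
Since $\lambda \notin X$, none of the basis elements $C_{p',q}^{\sigma'}$ appearing on the left lies in $I_X + I_{<\lambda}$; linear independence of basis elements modulo $I_X + I_{<\lambda}$ (a consequence of Lemma \ref{lem:quotientBasis} applied to the downward closed set $X \cup \{\mu \in \Lambda \mid \mu < \lambda\}$) forces $r_{a_1} = r_{a_2}$. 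Therefore we may define $r_{\bar{a}} := r_a$ for any lift, and the dependency requirement of \ref{item:W3} in $\faktor{A}{I_X}$ is inherited directly from the corresponding statement in $A$. The main obstacle I anticipate is this well-definedness argument; once it is in place, the rest is bookkeeping.
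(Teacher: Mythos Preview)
Your proof is correct and follows essentially the same route as the paper: verify \ref{item:W1} via Lemma \ref{lem:quotientBasis}, \ref{item:W2} via Lemma \ref{lem:IStar}, and \ref{item:W3} by pushing down the congruence for $A$ through $\pi_X$ using that $\pi_X(I_{<\lambda})$ is the correct submodule in the quotient. Your additional argument that the coefficients $r_a$ are independent of the chosen lift is more detail than the paper supplies (the paper simply invokes the correspondence theorem), but it is correct and a reasonable thing to make explicit.
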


\begin{proof} Since $X$ is downward closed, $I_X$ is a twosided ideal, so $\faktor{A}{I_X}$ is again an associative $R$-algebra. Since $X$ is proper, $I_X$ is a proper ideal of $A$, so $\faktor{A}{I_X}$ is unital. The set $\Lambda \setminus X$ is a poset by restricting the order. The functions $G$ and $M$ still assign a group and a set respectively to each $\lambda \in \Lambda \setminus X$, and $\pi_X \circ C$ assigns an element of $\faktor{A}{I_X}$, to each triple $(p, \sigma, q)$ in $ \coprod_{\lambda \in \Lambda \setminus X} M(\lambda) \times G(\lambda) \times M(\lambda)$. The involution $*$ descends to the quotient by Lemma \ref{lem:IStar}. This establises that the datum is of the correct form, and we must now check that it satisfies the axioms.

Axiom \ref{item:W1} follows from Lemma \ref{lem:quotientBasis}. Axiom \ref{item:W2} is automatic. Axiom \ref{item:W3} is essentially the correspondence theorem: note that for $\lambda \in \Lambda \setminus X$, the projection $\pi_X$ gives an isomorphism between the ideal $\pi_X(I_{< \lambda})$ of $\faktor{A}{I_X}$ and the ideal $I_{< \lambda} + I_X$ of $A$. This latter ideal certainly contains $I_{< \lambda}$, so Axiom \ref{item:W3} for $\faktor{A}{I_X}$ is implied by Axiom \ref{item:W3} for $A$. \end{proof}

\begin{remark} \label{rmk:basisAbuse} We will often abuse notation by writing $C_{p,q}^{\sigma}$ for $\pi_X(C_{p,q}^{\sigma})$, which is to say we will identify the basis of $\faktor{A}{I_X}$ with the appropriate subset of the basis of $A$.
\end{remark}

If $\lambda$ is a minimal element of $\Lambda \setminus X$, then $I_{< \lambda} \subset I_X$. This implies the following refinement of Axiom \ref{item:W3}, and its dual, Remark \ref{rmk:W3}, in the quotient $\faktor{A}{I_X}$.

\begin{proposition} \label{prop:quotientAx3} Let $X$ be a downward closed subset of $\Lambda$, and consider the quotient algebra $\faktor{A}{I_X}$. For any minimal element $\lambda$ of $\Lambda \setminus X$, any $\sigma \in G(\lambda)$, any $p,q \in M(\lambda)$, and any $a \in \faktor{A}{I_X}$, we have the equalities $$a C_{p,q}^{\sigma} = \sum_{\substack{\sigma' \in G(\lambda) \\ p' \in M(\lambda) }} r_a(p',\sigma'\sigma^{-1},p) C_{p',q}^{\sigma'}$$ and $$C_{p,q}^{\sigma}a = \sum_{\substack{\sigma' \in G(\lambda) \\ q' \in M(\lambda)}} r_{a^*}(q',(\sigma')^{-1}\sigma,q)C_{p,q'}^{\sigma'}$$ in $\faktor{A}{I_{X}}$. \qed
\end{proposition}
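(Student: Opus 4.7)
The plan is to reduce both equalities directly to Axiom \ref{item:W3} and its dual (Remark \ref{rmk:W3}) by exploiting the order-theoretic observation made immediately before the proposition: since $X$ is downward closed and $\lambda$ is minimal in $\Lambda \setminus X$, every $\mu < \lambda$ must lie in $X$, so $I_{<\lambda} \subseteq I_X$.

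For the first equation, I would pick a lift $\tilde{a} \in A$ of $a \in \faktor{A}{I_X}$ and apply Axiom \ref{item:W3} to obtain
$$\tilde{a}\, C_{p,q}^{\sigma} \equiv \sum_{\substack{\sigma' \in G(\lambda) \\ p' \in M(\lambda) }} r_{\tilde{a}}(p',\sigma'\sigma^{-1},p)\, C_{p',q}^{\sigma'} \pmod{I_{<\lambda}}.$$
Because $I_{<\lambda} \subseteq I_X$, projecting via $\pi_X$ upgrades this congruence to an honest equality in $\faktor{A}{I_X}$, which is exactly the asserted formula, provided we set $r_a := r_{\tilde{a}}$. The second equation is proven identically, starting from the rewritten form of Axiom \ref{item:W3} recorded at the end of Remark \ref{rmk:W3} rather than from Axiom \ref{item:W3} itself.

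The only genuine thing to verify -- and essentially the sole obstacle -- is that the coefficients $r_{\tilde{a}}(\cdot)$ descend to well-defined functions of $a$. Two lifts differ by some $x \in I_X$, and subtracting the two resulting instances of Axiom \ref{item:W3} yields
$$x\, C_{p,q}^{\sigma} \equiv \sum_{\substack{\sigma' \in G(\lambda) \\ p' \in M(\lambda)}} \bigl(r_{\tilde{a}+x}-r_{\tilde{a}}\bigr)(p',\sigma'\sigma^{-1},p)\, C_{p',q}^{\sigma'} \pmod{I_{<\lambda}}.$$
The left-hand side lies in $I_X$ (since $I_X$ is a twosided ideal) and $I_{<\lambda} \subseteq I_X$, so the right-hand side must vanish modulo $I_X$. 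By Lemma \ref{lem:quotientBasis}, the images $\pi_X(C_{p',q}^{\sigma'})$ with $p' \in M(\lambda)$ and $\sigma' \in G(\lambda)$ are $R$-linearly independent in $\faktor{A}{I_X}$ (because $\lambda \notin X$), forcing every coefficient difference to be zero. Hence $r_a$ is well-defined, and both equalities of the proposition follow.
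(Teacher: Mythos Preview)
Your argument is correct and follows the same route the paper takes: the paper simply notes that $I_{<\lambda}\subset I_X$ for $\lambda$ minimal in $\Lambda\setminus X$, so the congruences of Axiom~\ref{item:W3} and Remark~\ref{rmk:W3} become equalities in the quotient, and marks the proposition with a \texttt{qed}. Your extra check that $r_a$ is independent of the lift $\tilde a$ is a detail the paper leaves implicit (it is absorbed into Proposition~\ref{prop:quotientWossit}, which already equips $\faktor{A}{I_X}$ with its own {\wossit} datum and hence its own coefficient functions), but it is correct and a reasonable thing to spell out.
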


The following two lemmas are elementary.

\begin{lemma} \label{lem:XcupLambda} If $X$ is a downward closed subset of a poset $\Lambda$, and $\lambda$ is a minimal element of $\Lambda \setminus X$, then $X \cup \{ \lambda \}$ is again downward closed. \qed
\end{lemma}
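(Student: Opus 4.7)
The plan is a straightforward case analysis on the two ways an element can belong to $X \cup \{\lambda\}$. Let $\mu \leq \nu$ with $\nu \in X \cup \{\lambda\}$; we must show $\mu \in X \cup \{\lambda\}$. If $\nu \in X$, then downward closure of $X$ gives $\mu \in X$ immediately, and we are done. So the interesting case is $\nu = \lambda$, and the task reduces to showing that any $\mu \leq \lambda$ lies in $X \cup \{\lambda\}$.

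For this remaining case, I would split further: if $\mu = \lambda$ the conclusion is trivial, so assume $\mu \neq \lambda$, hence $\mu < \lambda$ (antisymmetry/definition of strict order). Then $\mu$ cannot lie in $\Lambda \setminus X$, because that would contradict the assumed minimality of $\lambda$ in $\Lambda \setminus X$. Therefore $\mu \in X \subset X \cup \{\lambda\}$, as required.

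There is no real obstacle here: the argument uses only the defining properties of downward closed subsets and minimal elements of a poset, plus a trivial case split. The only subtlety worth stating explicitly in the write-up is the observation that a minimal element of $\Lambda \setminus X$ is not required to be minimal in $\Lambda$, so one really does need to make the argument via the minimality hypothesis rather than, say, claiming $\mu < \lambda$ implies $\mu$ is automatically below every element of $\Lambda \setminus X$.
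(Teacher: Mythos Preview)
Your argument is correct; the paper itself gives no proof at all (the lemma is stated with an immediate \qed), treating it as an elementary poset fact, and your case analysis is exactly the routine verification one would supply.
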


\begin{lemma} \label{lem:YcupLambda} If $Y \subset X$ are downward closed subsets of a poset $\Lambda$, and $\lambda$ is minimal in $X \setminus Y$, then $Y \cup \{\lambda\}$ is again downward closed. \qed
\end{lemma}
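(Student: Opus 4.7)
The plan is to verify the defining property of downward closedness directly, by taking an arbitrary pair $\mu \leq \nu$ with $\nu \in Y \cup \{\lambda\}$ and showing that $\mu \in Y \cup \{\lambda\}$. I would split into two cases according to whether $\nu \in Y$ or $\nu = \lambda$. If $\nu \in Y$, the conclusion $\mu \in Y \subseteq Y \cup \{\lambda\}$ is immediate from downward closedness of $Y$ (Lemma~\ref{lem:XcupLambda} is not even needed here). So the only interesting case is $\nu = \lambda$, which is where minimality of $\lambda$ must enter.

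In the case $\nu = \lambda$, subdivide further according to whether $\mu = \lambda$ or $\mu < \lambda$. When $\mu = \lambda$ there is nothing to prove. When $\mu < \lambda$, first apply downward closedness of $X$ to $\lambda \in X$ to deduce $\mu \in X$; then observe that if $\mu$ were not in $Y$ it would lie in $X \setminus Y$ strictly below $\lambda$, contradicting the hypothesis that $\lambda$ is minimal in $X \setminus Y$. Hence $\mu \in Y$, completing the proof. The only nontrivial step is this final appeal to minimality; everything else is pure bookkeeping on the definitions.
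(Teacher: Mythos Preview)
Your argument is correct and is exactly the obvious direct verification; the paper itself omits the proof entirely (the lemma is stated with a bare \qed), so there is nothing to compare against beyond noting that your write-up is what the reader is expected to supply.
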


\subsection{The product of two basis elements}

For any minimal element $\lambda$ of $\Lambda \setminus X$, consider the product of two basis elements $C_{p_1,q_1}^{\sigma_1} C_{p_2,q_2}^{\sigma_2}$ using the formulae of Proposition \ref{prop:quotientAx3} (with $\sigma_1, \sigma_2 \in G(\lambda)$, and any $p_1,p_2,q_1,q_2 \in M(\lambda)$). The first formula shows that only basis elements of the form $C_{p',q_2}^{\sigma'}$ may appear with nonzero coefficient, and the second shows that only basis elements of the form $C_{p_1,q'}^{\sigma'}$ may appear with nonzero coefficient, so in fact only basis elements of the form $C_{p_1,q_2}^{\sigma'}$ may appear.

We therefore have: \begin{align*} C_{p_1,q_1}^{\sigma_1}  C_{p_2,q_2}^{\sigma_2} & = \sum_{\substack{\sigma' \in G(\lambda)}} r_{C_{p_1,q_1}^{\sigma_1}}(p_1,\sigma'\sigma_2^{-1},p_2) C_{p_1,q_2}^{\sigma'} \\ & = \sum_{\substack{\sigma' \in G(\lambda)}} r_{C_{q_2,p_2}^{\sigma_2^{-1}}}(q_2,(\sigma')^{-1}\sigma_1,q_1)C_{p_1,q_2}^{\sigma'}
\end{align*}

Equating coefficients, we have $$r_{C_{p_1,q_1}^{\sigma_1}}(p_1,\sigma'\sigma_2^{-1},p_2) = r_{C_{q_2,p_2}^{\sigma_2^{-1}}}(q_2,(\sigma')^{-1}\sigma_1,q_1)$$ for each $\sigma' \in G(\lambda)$. In this equality, $p_1$ appears only on the left hand side, $q_2$ appears only on the left, and the two appear in the same two entries. Thus, $r_{C_{p_1,q_1}^{\sigma_1}}(p_1,\sigma'\sigma_2^{-1},p_2)$ is independent of $p_1$, and the function $$s(\sigma_1,q_1,p_2,\tau) := r_{C_{p_1,q_1}^{\sigma_1}}(p_1,\tau,p_2) \in R$$ is well defined. We have established the following corollary of Proposition \ref{prop:quotientAx3}.

\begin{corollary} \label{cor:basisMultFormula} Let $X$ be a downward closed subset of $\Lambda$, and consider the quotient algebra $\faktor{A}{I_X}$. For any minimal element $\lambda$ of $\Lambda \setminus X$, any $\sigma_1, \sigma_2 \in G(\lambda)$, and any $p_1,p_2,q_1,q_2 \in M(\lambda)$, we have 
\begin{align*} C_{p_1,q_1}^{\sigma_1}  C_{p_2,q_2}^{\sigma_2} & = \sum_{\substack{\sigma' \in G(\lambda)}} s(\sigma_1,q_1,p_2,\sigma'\sigma_2^{-1}) C_{p_1,q_2}^{\sigma'} \\ & = \sum_{\substack{\sigma' \in G(\lambda)}} s(\sigma_2^{-1},p_2,q_1,(\sigma')^{-1}\sigma_1) C_{p_1,q_2}^{\sigma'}
\end{align*}
 in $\faktor{A}{I_{X}}$. In particular, for each $\sigma' \in G(\lambda)$ we have
\[\pushQED{\qed} 
s(\sigma_1,q_1,p_2,\sigma'\sigma_2^{-1})=s(\sigma_2^{-1},p_2,q_1,(\sigma')^{-1}\sigma_1). \qedhere
\popQED
\]
\end{corollary}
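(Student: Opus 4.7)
The corollary is essentially a careful formalization of the paragraph preceding it, so the plan is to make that reasoning explicit. The first step is to apply Proposition \ref{prop:quotientAx3} in two different ways to the product $C_{p_1,q_1}^{\sigma_1} C_{p_2,q_2}^{\sigma_2}$: first by taking $a = C_{p_1,q_1}^{\sigma_1}$ and using the left-multiplication formula with $C_{p_2,q_2}^{\sigma_2}$ playing the role of $C_{p,q}^{\sigma}$; then by taking $a = C_{p_2,q_2}^{\sigma_2}$ and using the right-multiplication formula with $C_{p_1,q_1}^{\sigma_1}$ playing the role of $C_{p,q}^{\sigma}$. In the second application one uses Axiom \ref{item:W2} to rewrite $a^* = (C_{p_2,q_2}^{\sigma_2})^* = C_{q_2,p_2}^{\sigma_2^{-1}}$.

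Next, I would compare the two expansions. By Lemma \ref{lem:quotientBasis}, the relevant $C_{p',q'}^{\tau}$ are $R$-linearly independent in $\faktor{A}{I_X}$, so coefficients may be compared. The first expansion contains only basis elements of the form $C_{p', q_2}^{\sigma'}$, while the second contains only those of the form $C_{p_1, q'}^{\sigma'}$. Consequently any $C_{p', q_2}^{\sigma'}$ with $p' \neq p_1$ must have zero coefficient in the first sum, which therefore collapses to $\sum_{\sigma' \in G(\lambda)} r_{C_{p_1,q_1}^{\sigma_1}}(p_1, \sigma'\sigma_2^{-1}, p_2)\, C_{p_1, q_2}^{\sigma'}$. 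By symmetry the second sum collapses to $\sum_{\sigma' \in G(\lambda)} r_{C_{q_2, p_2}^{\sigma_2^{-1}}}(q_2, (\sigma')^{-1}\sigma_1, q_1)\, C_{p_1, q_2}^{\sigma'}$.

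Equating coefficients of $C_{p_1,q_2}^{\sigma'}$ gives, for each $\sigma' \in G(\lambda)$, the identity
$$r_{C_{p_1,q_1}^{\sigma_1}}(p_1, \sigma'\sigma_2^{-1}, p_2) = r_{C_{q_2, p_2}^{\sigma_2^{-1}}}(q_2, (\sigma')^{-1}\sigma_1, q_1).$$
Among $\{p_1, q_2\}$, the left-hand side depends only on $p_1$ and the right-hand side only on $q_2$; since the two sides are equal for all valid choices, the left-hand side must be independent of $p_1$ and the right-hand side of $q_2$. This legitimises the definition $s(\sigma_1, q_1, p_2, \tau) := r_{C_{p_1,q_1}^{\sigma_1}}(p_1, \tau, p_2)$, independent of $p_1$. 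Substituting $\tau = \sigma'\sigma_2^{-1}$ on the left and reading the right-hand side of the displayed identity as $s(\sigma_2^{-1}, p_2, q_1, (\sigma')^{-1}\sigma_1)$ (via the same definition applied to the shuffled data) yields both displayed equalities of the corollary and the final relation $s(\sigma_1,q_1,p_2,\sigma'\sigma_2^{-1}) = s(\sigma_2^{-1},p_2,q_1,(\sigma')^{-1}\sigma_1)$.

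There is no genuine obstacle; the argument is pure bookkeeping around the two symmetric forms of Axiom \ref{item:W3}, and the only subtlety is confirming that the symmetry forces the dependencies of the function $r$ to drop as claimed, which is immediate from equating coefficients against a free basis.
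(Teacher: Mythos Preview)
Your proposal is correct and follows essentially the same route as the paper: apply both formulae of Proposition~\ref{prop:quotientAx3} to the product, intersect the supports to see that only terms $C_{p_1,q_2}^{\sigma'}$ survive, equate coefficients, and then use the ``appears only on one side'' observation to show the coefficient is independent of $p_1$ (and of $q_2$), legitimising the definition of $s$. Your explicit invocation of Lemma~\ref{lem:quotientBasis} for linear independence and of Axiom~\ref{item:W2} for $a^*$ are details the paper leaves implicit, but the argument is otherwise identical.
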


\subsection{Onesided ideals}

Again, in this subsection, $A$ will be a fixed {\wossit} algebra, with {\wossit} datum $(\Lambda, G, M, C, *)$.

\begin{definition} \label{def:J} Let $\lambda \in \Lambda$, and let $q \in M(\lambda)$. Define
$$J_q := \Span_R\{C_{p,q}^{\sigma} \mid \sigma \in g(\lambda), p \in M(\lambda)\} \subset A.$$
\end{definition}

If $\lambda$ is minimal in $\Lambda \setminus X$ then $I_{<\lambda} \subset I_X$. Axiom \ref{item:W3} then implies the following lemma.

\begin{lemma} \label{lem:JIdeal} If $X$ is a downward closed subset of $\Lambda$ (so that by Proposition \ref{prop:quotientWossit}, $\faktor{A}{I_X}$ is again a {\wossit} algebra), and $\lambda$ is minimal in $\Lambda \setminus X$, then $\pi_X(J_q)$ is a left ideal of $\faktor{A}{I_X}$. \qed
\end{lemma}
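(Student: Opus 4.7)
The plan is to reduce everything to a direct application of the first displayed equation in Proposition \ref{prop:quotientAx3}. Since left multiplication in $\faktor{A}{I_X}$ is $R$-linear, and $\pi_X(J_q)$ is the $R$-span of the elements $\pi_X(C_{p,q}^{\sigma})$ with $\sigma \in G(\lambda)$, $p \in M(\lambda)$, it suffices to show that for every basis element $a$ of $\faktor{A}{I_X}$ and every generator $\pi_X(C_{p,q}^{\sigma})$ of $\pi_X(J_q)$, the product $a \cdot \pi_X(C_{p,q}^{\sigma})$ lies in $\pi_X(J_q)$.

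First I would observe that because $\lambda$ is minimal in $\Lambda \setminus X$, every $\mu < \lambda$ lies in $X$, hence $I_{< \lambda} \subset I_X$. This is exactly what makes the congruence in Axiom \ref{item:W3} into an equality after passing to the quotient, and is the content of Proposition \ref{prop:quotientAx3}. Applying that proposition, for any $a \in \faktor{A}{I_X}$ we get
$$a \cdot \pi_X(C_{p,q}^{\sigma}) = \sum_{\substack{\sigma' \in G(\lambda) \\ p' \in M(\lambda)}} r_a(p',\sigma'\sigma^{-1},p)\, \pi_X(C_{p',q}^{\sigma'})$$
in $\faktor{A}{I_X}$. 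The crucial point is that the second subscript on every basis element appearing on the right is still $q$; thus each summand lies in $\pi_X(J_q)$, and therefore so does the whole sum.

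Since this holds for each generator of $\pi_X(J_q)$, and $a$ was arbitrary in $\faktor{A}{I_X}$, extending by $R$-linearity shows $\pi_X(J_q)$ is closed under left multiplication by $\faktor{A}{I_X}$. It is also an $R$-submodule by definition, so it is a left ideal, as required. There is no real obstacle here: the lemma is essentially a direct corollary of Proposition \ref{prop:quotientAx3}, the only subtle point being the initial observation that minimality of $\lambda$ in $\Lambda \setminus X$ gives $I_{<\lambda} \subset I_X$, which is precisely the hypothesis needed to invoke that proposition.
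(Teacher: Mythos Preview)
Your proof is correct and follows essentially the same approach as the paper: the paper simply notes that minimality of $\lambda$ in $\Lambda \setminus X$ gives $I_{<\lambda} \subset I_X$, and then Axiom~\ref{item:W3} immediately yields the lemma. You have spelled this out via Proposition~\ref{prop:quotientAx3}, which is just the packaged form of the same observation.
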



Axiom \ref{item:W1} says that if $q \neq q'$ then the canonical bases of $J_q$ and $J_{q'}$ are disjoint subsets of the canonical basis of $A$. This gives the following structural property.

\begin{lemma} \label{lem:JDisjt} Let $\lambda, \lambda' \in \Lambda$, $q \in M(\lambda)$, and $q' \in M(\lambda')$. If $q \neq q'$ then $J_q \cap J_{q'}=0$ in $A$. \qed \label{JInt}
\end{lemma}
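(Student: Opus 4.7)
The plan is to deduce this directly from Axiom \ref{item:W1}, which provides the canonical $R$-basis of $A$. By Definition \ref{def:J}, the algebras $J_q$ and $J_{q'}$ are the $R$-spans of the subsets
$$B_q := \{C_{p,q}^{\sigma} \mid \sigma \in G(\lambda),\ p \in M(\lambda)\}, \qquad B_{q'} := \{C_{p',q'}^{\sigma'} \mid \sigma' \in G(\lambda'),\ p' \in M(\lambda')\}$$
of the canonical basis of $A$.

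First I would verify that $B_q \cap B_{q'} = \emptyset$. By the injectivity clause of Axiom \ref{item:W1}, two basis elements $C_{p,q}^{\sigma}$ and $C_{p',q'}^{\sigma'}$ coincide only if the triples $(p,\sigma,q)$ and $(p',\sigma',q')$ are equal as elements of the disjoint union $\coprod_{\mu \in \Lambda} M(\mu) \times G(\mu) \times M(\mu)$; in particular this forces equality of the third coordinates (and of the indexing $\lambda$). Since $q \neq q'$ by hypothesis, no such equality can occur, regardless of whether $\lambda = \lambda'$ or not.

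The lemma now reduces to the general principle that in a free $R$-module with basis $\mathcal{B}$, if $B_1, B_2 \subseteq \mathcal{B}$, then $\Span_R(B_1) \cap \Span_R(B_2) = \Span_R(B_1 \cap B_2)$. Explicitly, any $x \in A$ admits a unique expansion in the basis $\{C_{p,q}^{\sigma}\}$; any element of $J_q$ is supported on $B_q$, and any element of $J_{q'}$ is supported on $B_{q'}$. If $x \in J_q \cap J_{q'}$ then by uniqueness its support lies in $B_q \cap B_{q'} = \emptyset$, so $x = 0$, as desired.

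There is no substantive obstacle here: the only thing to check is the disjointness of $B_q$ and $B_{q'}$, which is immediate from the injectivity in Axiom \ref{item:W1}, after which the conclusion is just the standard fact about supports in a free module.
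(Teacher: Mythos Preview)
Your proof is correct and follows exactly the approach the paper indicates: the sentence preceding the lemma already notes that Axiom~\ref{item:W1} makes the canonical bases of $J_q$ and $J_{q'}$ disjoint, and the lemma is then stated with an immediate \qed. Your write-up simply spells out this one-line observation in full detail.
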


From this lemma, we obtain the next one, which will be very important to us.

\begin{lemma} \label{lem:decomposition} If $X$ is a downward closed subset of $\Lambda$, and $\lambda$ is a minimal element of $\Lambda \setminus X$, then the inclusions $J_q \to I_\lambda$ induce an isomorphism $$\pi_X(I_{X \cup \{\lambda\}}) = \faktor{I_{X \cup \{\lambda\}}}{I_{X}} \cong \bigoplus_{q \in M(\lambda)} \pi_X(J_q)$$ of left $A$-modules.
\end{lemma}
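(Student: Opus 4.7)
The plan is to prove this by comparing $R$-bases on both sides and using the already-established facts that $\pi_X(J_q)$ is a left ideal of $\faktor{A}{I_X}$ (Lemma \ref{lem:JIdeal}) and that quotients by the ideals $I_X$ are $R$-free on the surviving basis elements (Lemma \ref{lem:quotientBasis}).

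First I would spell out the map. The inclusions $J_q \hookrightarrow I_{X \cup \{\lambda\}} \subset A$, summed and post-composed with $\pi_X$, give a map of $R$-modules
$$\Phi : \bigoplus_{q \in M(\lambda)} J_q \longrightarrow \pi_X(I_{X \cup \{\lambda\}}).$$
I need to check that $\Phi$ kills exactly the part of $\bigoplus_q J_q$ that lies in $I_X$, that this passes to an iso onto $\pi_X(I_{X \cup \{\lambda\}})$, and that each $\pi_X(J_q)$ is a left $A$-submodule.

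Next I would do the basis count. By Definition \ref{def:I} and Lemma \ref{lem:quotientBasis}, $\pi_X(I_{X \cup \{\lambda\}})$ is $R$-free on the images $\pi_X(C_{p,q}^\sigma)$ with $\sigma \in G(\lambda)$ and $p,q \in M(\lambda)$ (the basis elements of $I_{X \cup \{\lambda\}}$ indexed by $\mu \in X$ are precisely those killed by $\pi_X$). On the domain side, $J_q$ is $R$-free on $\{C_{p,q}^\sigma : \sigma \in G(\lambda), p \in M(\lambda)\}$ by Definition \ref{def:J}; since $\lambda \notin X$, Lemma \ref{lem:quotientBasis} says that these basis vectors remain linearly independent after applying $\pi_X$, so $\pi_X$ is injective on each $J_q$. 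Moreover, by Lemma \ref{lem:JDisjt} the bases of different $J_q$ are disjoint subsets of the basis of $A$, and they remain disjoint after applying $\pi_X$ (again by Lemma \ref{lem:quotientBasis}, since all the indices $\mu$ involved are equal to $\lambda \notin X$). Hence $\Phi$ sends the union of chosen bases bijectively onto the specified basis of $\pi_X(I_{X \cup \{\lambda\}})$, so $\Phi$ is an $R$-module isomorphism.

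Finally I would address the module structure. By Lemma \ref{lem:JIdeal}, each $\pi_X(J_q)$ is a left ideal of $\faktor{A}{I_X}$, hence a left $A$-submodule via the quotient map $A \to \faktor{A}{I_X}$; similarly $\pi_X(I_{X \cup \{\lambda\}})$ is a left $A$-submodule. The inclusions $\pi_X(J_q) \hookrightarrow \pi_X(I_{X \cup \{\lambda\}})$ are therefore $A$-linear, so $\Phi$ is a map of left $A$-modules. Combined with the $R$-module isomorphism established above, this gives the claimed isomorphism.

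There is no real obstacle here: the lemma is essentially a bookkeeping consequence of the definitions together with Lemmas \ref{lem:quotientBasis}, \ref{lem:JIdeal}, and \ref{lem:JDisjt}. The only thing worth being careful about is making clear that $J_q$ itself is generally not a left ideal of $A$; it is only after projecting to $\faktor{A}{I_X}$ (with $\lambda$ minimal in $\Lambda \setminus X$) that the left $A$-module structure kicks in, which is exactly the content of Lemma \ref{lem:JIdeal}.
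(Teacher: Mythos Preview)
Your proof is correct and follows essentially the same approach as the paper: both use Lemma \ref{lem:JIdeal} for the left $A$-module structure on each $\pi_X(J_q)$, the basis description from Lemma \ref{lem:quotientBasis} to see that $\faktor{I_{X \cup \{\lambda\}}}{I_X}$ is spanned by the $\pi_X(J_q)$, and Lemma \ref{lem:JDisjt} to conclude that the sum is direct. Your version is slightly more explicit about the map and the basis bijection, but the content is the same.
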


\begin{proof} By Lemma \ref{lem:JIdeal}, the quotients $\pi_X(J_q)$ are indeed left $A$-modules. By Lemma \ref{lem:XcupLambda}, $X \cup \{ \lambda \}$ is again downwards closed, so by Lemma \ref{lem:IIdeal}, $I_{X \cup \{\lambda\}}$, hence the quotient, is a left (in fact twosided) $A$-module.

The twosided ideal $I_X$ of $A$ is defined to be the $R$-span of the elements $C_{p,q}^{\mu}$, for $\mu \in X$ and $p,q \in M(\mu)$. It follows that $\faktor{I_{X \cup \{\lambda\}}}{I_{X}}$ is the $R$-span of the elements $C_{p,q}^{\lambda}$ for $p,q \in M(\lambda)$, hence that every element of $\faktor{I_{X \cup \{\lambda\}}}{I_{X}}$ is an $R$-linear combination of elements drawn from the $J_q$, for $q \in M(\lambda)$: $$\faktor{I_{X \cup \{\lambda\}}}{I_{X}} \cong \sum_{q \in M(\lambda)} \pi_X(J_q).$$
By Lemma \ref{lem:JDisjt}, this sum is in fact direct. This completes the proof. \end{proof}

\subsection{Equivariance}

Let $R$ be a commutative ring, and let $A$ be a {\wossit} algebra over $R$, with {\wossit} datum $(\Lambda, G, M, C, *)$.

We here give a perhaps more intuitive reformulation of the equivariance condition of Axiom \ref{item:W3}, and its dual, Remark \ref{rmk:W3}. Let $\lambda \in \Lambda$, $\sigma, \tau \in G(\lambda)$ and $p,q \in M(\lambda)$ then for any element $a \in A$, Axiom \ref{item:W3} gives $$a C_{p,q}^{\sigma \tau} \equiv \sum_{\substack{\sigma'' \in G(\lambda) \\ p' \in M(\lambda) }} r_a(p',\sigma''\tau^{-1}\sigma^{-1},p) C_{p',q}^{\sigma''} (\mathrm{mod} I_{< \lambda}),$$ and reindexing the sum via $\sigma'=\sigma'' \tau^{-1}$ gives \begin{align*}
a C_{p,q}^{\sigma \tau}  &  \equiv \sum_{\substack{\sigma' \in G(\lambda) \\ p' \in M(\lambda) }} r_a(p',(\sigma' \tau) \tau^{-1}\sigma^{-1},p) C_{p',q}^{\sigma' \tau} (\mathrm{mod} I_{< \lambda}) \\
  & = \sum_{\substack{\sigma' \in G(\lambda) \\ p' \in M(\lambda) }} r_a(p',\sigma' \sigma^{-1},p) C_{p',q}^{\sigma' \tau} (\mathrm{mod} I_{< \lambda}).
\end{align*}

We record this congruence, and the dual obtained from Remark \ref{rmk:W3}, as the next lemma.

\begin{lemma} \label{lem:equivariance} For any $\lambda \in \Lambda$, $\sigma, \tau \in G(\lambda)$ and $p,q \in M(\lambda)$, and any $a \in A$, we have
$$a C_{p,q}^{\sigma \tau}  \equiv \sum_{\substack{\sigma' \in G(\lambda) \\ p' \in M(\lambda) }} r_a(p',\sigma' \sigma^{-1},p) C_{p',q}^{\sigma' \tau} (\mathrm{mod} I_{< \lambda}),$$ and \[\pushQED{\qed} 
C_{p,q}^{\tau\sigma}a \equiv \sum_{\substack{\sigma' \in G(\lambda) \\ q' \in M(\lambda)}} r_{a^*}(q',(\sigma')^{-1}\sigma,q)C_{p,q'}^{\tau\sigma'} (\mathrm{mod} I_{< \lambda}). \qedhere
\popQED
\]
\end{lemma}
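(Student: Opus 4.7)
The plan is simply to apply Axiom \ref{item:W3} with $\sigma$ replaced by the product $\sigma\tau \in G(\lambda)$ and then reindex the resulting sum so that the group element appearing in the coefficient $r_a$ has the form $\sigma'\sigma^{-1}$ rather than $\sigma''(\sigma\tau)^{-1}$.

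Explicitly, first I would apply Axiom \ref{item:W3} directly to the product $a C_{p,q}^{\sigma\tau}$, producing
\begin{equation*}
a C_{p,q}^{\sigma\tau} \equiv \sum_{\substack{\sigma'' \in G(\lambda) \\ p' \in M(\lambda)}} r_a(p', \sigma''\tau^{-1}\sigma^{-1}, p)\, C_{p',q}^{\sigma''} \pmod{I_{<\lambda}}.
\end{equation*}
Then I would carry out the bijective change of variables $\sigma'' = \sigma' \tau$ on $G(\lambda)$. Under this substitution $\sigma''\tau^{-1}\sigma^{-1} = \sigma'\sigma^{-1}$ and $C_{p',q}^{\sigma''} = C_{p',q}^{\sigma'\tau}$, which yields exactly the first claimed congruence.

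For the dual statement, the same argument applies with Axiom \ref{item:W3} replaced by its dual form recorded in Remark \ref{rmk:W3}: I would apply that congruence to $C_{p,q}^{\tau\sigma} a$ (viewing $\tau\sigma \in G(\lambda)$ as the single index), and then reindex via $\sigma'' = \tau \sigma'$ to rewrite $(\sigma'')^{-1}(\tau\sigma)$ as $(\sigma')^{-1}\sigma$. This produces the second congruence.

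There is essentially no obstacle here: the displayed calculation immediately preceding the lemma is already the proof of the first half, and the only thing to note is that the group reindexing is valid because left (resp.\ right) multiplication by $\tau$ is a bijection of $G(\lambda)$. The lemma is really just a convenient repackaging of Axiom \ref{item:W3} and Remark \ref{rmk:W3} that isolates the factor of $\tau$ on the right (resp.\ left) of the permutation index, so the proof can be concluded in a single line by citing the preceding derivation together with its dual.
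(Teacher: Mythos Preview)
Your proposal is correct and matches the paper's own argument essentially line for line: the paper also applies Axiom~\ref{item:W3} to $aC_{p,q}^{\sigma\tau}$, reindexes via $\sigma'=\sigma''\tau^{-1}$ (equivalently $\sigma''=\sigma'\tau$), and then records the dual by the same manipulation applied to Remark~\ref{rmk:W3}. There is nothing to add.
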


For a product of basis elements corresponding to the same $\lambda$, in terms of the function $s$ of Corollary \ref{cor:basisMultFormula}, we get:

\begin{corollary} \label{cor:basisMultEquivariance} Let $X$ be a downward closed subset of $\Lambda$, and consider the quotient algebra $\faktor{A}{I_X}$. For any minimal element $\lambda$ of $\Lambda \setminus X$, any $\sigma_1, \sigma_2 \in G(\lambda)$, and any $p_1,p_2,q_1,q_2 \in M(\lambda)$, we have 
\begin{align*} C_{p_1,q_1}^{\tau_1 \sigma_1}  C_{p_2,q_2}^{\sigma_2 \tau_2} & = \sum_{\substack{\sigma' \in G(\lambda)}} s(\sigma_1,q_1,p_2,\sigma'\sigma_2^{-1}) C_{p_1,q_2}^{\tau_1 \sigma' \tau_2} \\ & = \sum_{\substack{\sigma' \in G(\lambda)}} s(\sigma_2^{-1},p_2,q_1,(\sigma')^{-1}\sigma_1) C_{p_1,q_2}^{\tau_1 \sigma' \tau_2}
\end{align*}
 in $\faktor{A}{I_{X}}$. \qed
\end{corollary}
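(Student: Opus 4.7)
The approach is to apply Corollary \ref{cor:basisMultFormula} directly to the product (with $\tau_1\sigma_1$ and $\sigma_2\tau_2$ substituted for the roles of the arguments $\sigma_1$ and $\sigma_2$ in that corollary), and then to reindex and invoke a left-equivariance property of the structure constant $s$. We assume, as is clearly intended but not stated explicitly, that $\tau_1, \tau_2 \in G(\lambda)$.

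Applied in this way, Corollary \ref{cor:basisMultFormula} gives
$$C_{p_1,q_1}^{\tau_1 \sigma_1} C_{p_2,q_2}^{\sigma_2 \tau_2} = \sum_{\eta \in G(\lambda)} s(\tau_1 \sigma_1, q_1, p_2, \eta \tau_2^{-1} \sigma_2^{-1}) C_{p_1, q_2}^{\eta},$$
and, from its second formula, an analogous expression with coefficient $s(\tau_2^{-1}\sigma_2^{-1}, p_2, q_1, \eta^{-1} \tau_1 \sigma_1)$. Reparametrizing by $\eta = \tau_1 \sigma' \tau_2$ converts the basis element into $C_{p_1,q_2}^{\tau_1 \sigma' \tau_2}$ and simplifies the fourth argument of $s$ to $\tau_1 \sigma' \sigma_2^{-1}$ in the first expression and to $\tau_2^{-1}(\sigma')^{-1}\sigma_1$ in the second.

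The remaining task is to reduce $s(\tau_1\sigma_1, q_1, p_2, \tau_1 \sigma' \sigma_2^{-1})$ to $s(\sigma_1, q_1, p_2, \sigma' \sigma_2^{-1})$, and similarly for the second. For this we use the symmetry $s(\alpha, q_1, p_2, \beta\gamma^{-1}) = s(\gamma^{-1}, p_2, q_1, \beta^{-1}\alpha)$ from Corollary \ref{cor:basisMultFormula}. Its right-hand side depends on the pair $(\alpha, \beta)$ only through the product $\beta^{-1}\alpha$, which is invariant under the replacement $(\alpha,\beta)\mapsto(\tau\alpha, \tau\beta)$ for any $\tau$. This yields the left-equivariance $s(\tau\alpha, q_1, p_2, \tau\delta) = s(\alpha, q_1, p_2, \delta)$ for all $\tau, \alpha, \delta \in G(\lambda)$. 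Applied with $\tau = \tau_1$ it absorbs the leading $\tau_1$ factors in the first coefficient, and applied with $\tau = \tau_2^{-1}$ it handles the second.

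The main obstacle is recognizing this left-equivariance as the crucial input. Neither the defining formula $s(\sigma_1, q_1, p_2, \tau) = r_{C_{p_1,q_1}^{\sigma_1}}(p_1, \tau, p_2)$ nor Axiom \ref{item:W3} applied to a single factor makes it manifest; it really comes from the two-sided symmetry built into Corollary \ref{cor:basisMultFormula}. Once this is spotted, the proof reduces to routine reindexing.
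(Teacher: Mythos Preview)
Your proof is correct. The paper treats this corollary as an immediate consequence of Lemma~\ref{lem:equivariance} (the equivariance lemma) read in terms of the function $s$ from Corollary~\ref{cor:basisMultFormula}: multiplying an extra $\tau_1$ into the left superscript and $\tau_2$ into the right superscript simply shifts the superscripts in the output by $\tau_1$ on the left and $\tau_2$ on the right, leaving the coefficients $s(\sigma_1,q_1,p_2,\sigma'\sigma_2^{-1})$ unchanged.

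You take a slightly different route: you apply Corollary~\ref{cor:basisMultFormula} with the full superscripts $\tau_1\sigma_1$ and $\sigma_2\tau_2$, reindex, and then remove the extraneous $\tau$'s from the coefficients by deriving a left-equivariance $s(\tau\alpha,q_1,p_2,\tau\delta)=s(\alpha,q_1,p_2,\delta)$ from the symmetry relation $s(\alpha,q_1,p_2,\beta\gamma^{-1})=s(\gamma^{-1},p_2,q_1,\beta^{-1}\alpha)$ already recorded in Corollary~\ref{cor:basisMultFormula}. This is a nice observation: it makes the argument self-contained within Corollary~\ref{cor:basisMultFormula} and avoids a separate appeal to Lemma~\ref{lem:equivariance}. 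The paper's route is perhaps more transparent, since Lemma~\ref{lem:equivariance} is precisely the equivariance statement needed, and your left-equivariance of $s$ is really just that lemma in disguise; but both arguments are short and valid.
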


One useful consequence of equivariance is the following, which will be necessary to show that the \emph{link modules} of the next section are indeed $A$-modules (Proposition \ref{prop:linkIsModule}).

\begin{lemma} \label{lem:eqProductReindexing}For each $\lambda \in \Lambda$, and every $a,b \in A$, the function $r_a$ satisfies the equation
$$\sum_{\sigma'\in G(\lambda)} r_{ab}(p',\sigma',p) = \sum_{\sigma'\in G(\lambda)} \sum_{\substack{\sigma'' \in G(\lambda) \\ p'' \in M(\lambda)}} r_a(p',\sigma',p'') r_b(p'',\sigma'',p)$$
\end{lemma}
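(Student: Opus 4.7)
The plan is to exploit associativity of multiplication in $A$, computing $(ab)\cdot C_{p,q}^{1}$ in two different ways modulo $I_{<\lambda}$. Here $q\in M(\lambda)$ can be chosen arbitrarily, and $1$ denotes the identity of the group $G(\lambda)$. Since $\{\mu:\mu<\lambda\}$ is downward closed, Lemma \ref{lem:IIdeal} tells us that $I_{<\lambda}$ is a twosided ideal, so multiplying congruences modulo $I_{<\lambda}$ by elements of $A$ on either side is legitimate.

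First, applying Axiom \ref{item:W3} directly to the product $ab$ gives
$$(ab)\,C_{p,q}^{1}\equiv \sum_{\substack{\sigma'\in G(\lambda)\\ p'\in M(\lambda)}} r_{ab}(p',\sigma',p)\,C_{p',q}^{\sigma'}\pmod{I_{<\lambda}}.$$
Second, applying Axiom \ref{item:W3} to $b\cdot C_{p,q}^{1}$ and then multiplying on the left by $a$, followed by another application of Axiom \ref{item:W3} to each $a\cdot C_{p'',q}^{\sigma''}$, produces
$$a(b\,C_{p,q}^{1})\equiv \sum_{\substack{\sigma',\sigma''\in G(\lambda)\\ p',p''\in M(\lambda)}} r_a(p',\sigma'(\sigma'')^{-1},p'')\,r_b(p'',\sigma'',p)\,C_{p',q}^{\sigma'}\pmod{I_{<\lambda}}.$$
Associativity equates these two expressions in $A$, and Lemma \ref{lem:quotientBasis} (applied to $X=\{\mu:\mu<\lambda\}$) says that the basis elements $C_{p',q}^{\sigma'}$ with $p'\in M(\lambda)$, $\sigma'\in G(\lambda)$ are $R$-linearly independent modulo $I_{<\lambda}$. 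Equating coefficients of $C_{p',q}^{\sigma'}$ yields the sharper, unsummed ``convolution'' identity
$$r_{ab}(p',\sigma',p)=\sum_{\substack{\sigma''\in G(\lambda)\\ p''\in M(\lambda)}} r_a(p',\sigma'(\sigma'')^{-1},p'')\,r_b(p'',\sigma'',p).$$

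Finally, summing over $\sigma'\in G(\lambda)$ and observing that for each fixed $\sigma''$ the map $\sigma'\mapsto\sigma'(\sigma'')^{-1}$ is a bijection of $G(\lambda)$, the inner sum $\sum_{\sigma'} r_a(p',\sigma'(\sigma'')^{-1},p'')$ becomes $\sum_{\sigma'} r_a(p',\sigma',p'')$, which is independent of $\sigma''$. This reindexing produces exactly the right-hand side of the desired equation. The only substantive point is the first step — handling the congruence correctly — and this is taken care of by Lemma \ref{lem:IIdeal}; the equating of coefficients and the group reindexing are routine.
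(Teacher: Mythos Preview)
Your proof is correct and follows essentially the same approach as the paper: both use associativity of $(ab)C_{p,q}^1 = a(bC_{p,q}^1)$, apply Axiom~\ref{item:W3} to obtain the unsummed convolution identity $r_{ab}(p',\rho,p) = \sum_{\sigma'', p''} r_a(p',\rho(\sigma'')^{-1},p'')r_b(p'',\sigma'',p)$, and then sum over the outer group variable and reindex via the bijection $\sigma' \mapsto \sigma'(\sigma'')^{-1}$ of $G(\lambda)$. Your explicit citations of Lemma~\ref{lem:IIdeal} and Lemma~\ref{lem:quotientBasis} to justify working modulo $I_{<\lambda}$ and equating coefficients are welcome additions that the paper leaves implicit.
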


\begin{proof} This lemma is essentially a consequence of the following: since $A$ is associative, we have
$$(ab)C_{p,q}^1 = a(bC_{p,q}^1).$$
Applying Axiom \ref{item:W3} to the left hand side, one sees that the coefficient of $C_{p',q}^{\rho}$ in $(ab)C_{p,q}^1$ is $r_{ab}(p',\rho,p)$. Applying the same axiom (twice) to the right hand side, one sees that the coefficient of the same basis element in $a(bC_{p,q}^1)$ is $\sum_{\sigma'', p''} r_a(p',\rho(\sigma'')^{-1},p'')r_b(p'',\sigma'',p)$. These coefficients must be equal, that is: $$r_{ab}(p',\rho,p) = \sum_{\sigma'', p''} r_a(p',\rho(\sigma'')^{-1},p'')r_b(p'',\sigma'',p).$$

The point is now that equivariance allows us to reindex in the desired manner on the right hand side, at least after an additional sum over $\rho$. Namely, \begin{align*} \sum_{\rho \in G(\lambda)} r_{ab}(p',\rho,p) & = \sum_{\rho \in G(\lambda)}\sum_{\substack{\sigma'' \in G(\lambda) \\ p'' \in M(\lambda)}} r_a(p',\rho(\sigma'')^{-1},p'')r_b(p'',\sigma'',p) \\
& = \sum_{\substack{(\rho,\sigma'') \in G(\lambda) \times G(\lambda) \\ p'' \in M(\lambda)}} r_a(p',\rho(\sigma'')^{-1},p'')r_b(p'',\sigma'',p) \\
& = \sum_{\substack{(\tau,\sigma'') \in G(\lambda) \times G(\lambda) \\ p'' \in M(\lambda)}} r_a(p',\tau,p'')r_b(p'',\sigma'',p) \\
& = \sum_{\tau \in G(\lambda)}\sum_{\substack{\sigma'' \in G(\lambda) \\ p'' \in M(\lambda)}} r_a(p',\tau,p'')r_b(p'',\sigma'',p),
\end{align*} where we used the fact that $$(\rho,\sigma'') \mapsto (\rho (\sigma'')^{-1}, \sigma'') = (\tau, \sigma'')$$ is a self-bijection of $G(\lambda) \times G(\lambda)$. This completes the proof. \end{proof}

\section{{\Link} modules and bilinear forms} \label{section:Form}

In this subsection, fix a commutative ring $R$, and a {\wossit} algebra $A$ over $R$, with {\wossit} datum $(\Lambda, G, M, C, *)$.

\begin{definition} \label{def:LinkMod} For $\lambda \in \Lambda$, let the \emph{{\link} module} $W(\lambda)$ be the free $R$-module on the symbols $C_p$, together with a left action of $A$ given by $$a C_p = \sum_{\substack{\sigma'\in G(\lambda) \\ p' \in M(\lambda)}} r_a(p',\sigma',p) C_{p'}.$$
\end{definition}

\begin{remark} \label{rmk:cellMod} If $A$ is a cellular algebra in the sense that each $G(\lambda)$ is trivial (Proposition \ref{prop:cellIsWossit}), then this definition reduces to Graham and Lehrer's definition \cite[Definition 2.1]{GrahamLehrer} of the \emph{cell module} associated to $\lambda$ (which they also denote by $W(\lambda)$). \end{remark}

\begin{remark} \label{rmk:mimicMult} As in the cellular case, note that the definition `mimics the multiplication from $A$' in the sense that Axiom \ref{item:W3} gives that, modulo $A(< \lambda)$, we have $$a C_{p,q}^{\tau} = \sum_{\substack{\sigma'\in G(\lambda) \\ p' \in M(\lambda)}} r_a(p',\sigma',p) C_{p',q}^{\sigma' \tau},$$ where we use the form of equivariance from Lemma \ref{lem:equivariance}. \end{remark}

We must verify that this multiplication rule actually makes $W(\lambda)$ into a left $A$-module.

\begin{proposition} \label{prop:linkIsModule} Each {\link} module $W(\lambda)$ is a left $A$-module. \end{proposition}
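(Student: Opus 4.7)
The plan is to verify the three axioms for a left $A$-module: $R$-bilinearity of the action, the unit acts as the identity, and associativity. The free $R$-module structure on $W(\lambda)$ is built in, so all that needs checking is that the given action respects these properties.

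For $R$-bilinearity, the action is already $R$-linear in $C_p$ by definition (we extend freely). Linearity in $a$ follows by expanding $a$ in the canonical basis and noting that Axiom \ref{item:W3} forces the coefficients $r_a(p',\sigma',p)$ to be $R$-linear in $a$, since different basis elements $C^{\sigma'}_{p',q}$ of $A$ are $R$-linearly independent by Axiom \ref{item:W1}.

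For the unit axiom, I would apply Axiom \ref{item:W3} with $a = 1$ and $\sigma = 1$ to the basis element $C_{p,q}^{1}$: this gives
\[
C_{p,q}^{1} \equiv \sum_{\sigma' \in G(\lambda),\, p' \in M(\lambda)} r_1(p',\sigma',p)\, C_{p',q}^{\sigma'} \pmod{I_{<\lambda}},
\]
and comparing coefficients using Axiom \ref{item:W1} forces $r_1(p',\sigma',p) = \delta_{p',p}\delta_{\sigma',1}$. Hence $1 \cdot C_p = \sum_{\sigma',p'} r_1(p',\sigma',p) C_{p'} = C_p$, as required.

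The main content is associativity, $(ab)C_p = a(bC_p)$, and this is precisely where Lemma \ref{lem:eqProductReindexing} is designed to intervene. Expanding using the definition, the coefficient of $C_{p'}$ on the left is $\sum_{\sigma' \in G(\lambda)} r_{ab}(p',\sigma',p)$, while on the right I would first expand $bC_p = \sum_{\sigma'',p''} r_b(p'',\sigma'',p) C_{p''}$ and then act by $a$ to obtain coefficient
\[
\sum_{\sigma' \in G(\lambda)} \sum_{\substack{\sigma'' \in G(\lambda) \\ p'' \in M(\lambda)}} r_a(p',\sigma',p'')\, r_b(p'',\sigma'',p).
\]
Equality of these two expressions is exactly the content of Lemma \ref{lem:eqProductReindexing}, so associativity follows immediately.

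The hard part has effectively been done in advance: without the equivariance reindexing already isolated in Lemma \ref{lem:eqProductReindexing}, one would only get a pointwise identity for each $\rho \in G(\lambda)$ (the coefficient of $C^\rho_{p',q}$ in $(ab)C^1_{p,q}$), not the summed identity needed here, and that gap is precisely what the $G(\lambda)$-action forces us to close by summing over $\rho$.
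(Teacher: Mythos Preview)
Your proof is correct and follows essentially the same approach as the paper's: bilinearity from $R$-linearity of the $r_a$'s, the unit axiom from comparing coefficients in Axiom~\ref{item:W3} with $a=1$ (the paper phrases this via Remark~\ref{rmk:mimicMult}, but the content is identical), and associativity reduced coefficient-by-coefficient to Lemma~\ref{lem:eqProductReindexing}. Your closing remark about why the summed identity is needed, rather than the pointwise one, is exactly the point of isolating that lemma.
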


\begin{proof} We must check that the multiplication is bilinear, and that for $x \in W(\lambda)$ and $a,b\in A$, we have $1 \cdot x = x$, and $a(bx)=(ab)x$.

Since the algebra multiplication in $A$ is bilinear, for fixed $p',\sigma,$ and $p$, we have that $r_a(p',\sigma,p)$ is $R$-linear in $a$. The action of $A$ on $C_p$ is therefore $R$-linear in the first variable. Linearity in the second variable is automatic, because the action is defined as the $R$-linear extension of an action on the basis $C_p$. This establishes bilinearity, and it therefore suffices to check the equalities $1 \cdot x = x$ and $a(bx)=(ab)x$ in the case that $x = C_p$ is a basis element.

The first equality then follows immediately from Remark \ref{rmk:mimicMult}. For the second equality, we must check that $$\sum_{\substack{\sigma'\in G(\lambda) \\ p'\in M(\lambda)}} r_{ab}(p',\sigma',p) = \sum_{\substack{\sigma'\in G(\lambda)  \\ p'\in M(\lambda)}} \sum_{\substack{\sigma'' \in G(\lambda) \\ p'' \in M(\lambda)}} r_a(p',\sigma',p'') r_b(p'',\sigma'',p),$$ but this is just the sum over $p'$ of the equality given in Lemma \ref{lem:eqProductReindexing}. This completes the proof.
\end{proof}

\subsection{Bilinear forms}

Let $C_{p_1,q_1}^{\sigma_1}$ and $C_{p_2,q_2}^{\sigma_2}$ be basis elements associated to the same $\lambda \in \Lambda$. Recall that by Corollary \ref{cor:basisMultFormula}, there is a function $s$, defined by $$s(\sigma_1,q_1,p_2,\rho) := r_{C_{p_1,q_1}^{\sigma_1}}(p_1,\rho,p_2) \in R,$$ such that, modulo $I(< \lambda)$, we have 
\begin{align*} C_{p_1,q_1}^{\sigma_1}  C_{p_2,q_2}^{\sigma_2} & = \sum_{\substack{\sigma' \in G(\lambda)}} s(\sigma_1,q_1,p_2,\sigma'\sigma_2^{-1}) C_{p_1,q_2}^{\sigma'} \\ & = \sum_{\substack{\sigma' \in G(\lambda)}} s(\sigma_2^{-1},p_2,q_1,(\sigma')^{-1}\sigma_1) C_{p_1,q_2}^{\sigma'}
\end{align*}

\begin{definition} \label{def:innerProduct} Let $\lambda \in \Lambda$, and let $\tau \in M(\lambda)$. The \emph{bilinear form (on $W(\lambda)$) associated to $\tau$} is defined on the basis by setting $$\langle C_q,C_p \rangle_{\tau} : = s(1,q,p,\tau),$$ and then extended bilinearly over all of $W(\lambda) \times W(\lambda)$.
\end{definition}

In other words, $\langle C_q,C_p \rangle_{\tau}$ is the coefficient of $C_{p_1,q_2}^{\tau}$ in $C_{p_1,q}^{1}  C_{p,q_2}^{1}$ for any $p_1,q_2 \in M(\lambda)$. 

\begin{remark} \label{rmk:cellForm} If $A$ is a cellular algebra in the sense that each $G(\lambda)$ is trivial (Proposition \ref{prop:cellIsWossit}), then (Remark \ref{rmk:cellMod}) the module $W(\lambda)$ coincides with Graham and Lehrer's cell module, and this definition reduces immediately to Graham and Lehrer's definition \cite[Definition 2.3]{GrahamLehrer} of the bilinear form $\phi_\lambda$.
\end{remark}

We record some easy properties of this bilinear form.

\begin{proposition} \label{prop:bilFormProperties} \begin{enumerate} We have the following properties.
    \item $\langle C_q,C_p \rangle_{\tau}$ is equal to the coefficient of $C_{p_1,q_2}^{\sigma_1\tau\sigma_2}$ in $C_{p_1,q}^{\sigma_1}  C_{p,q_2}^{\sigma_2}$ for any $\sigma_1,\sigma_2 \in G(\lambda)$ and any $p_1,q_2 \in M(\lambda)$.
    \item We have the symmetry property $$\langle u,v \rangle_{\tau} = \langle v,u \rangle_{\tau^{-1}}.$$
\end{enumerate}
\end{proposition}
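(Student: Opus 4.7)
The plan is to derive both properties directly from the multiplication formulae already recorded, namely Corollary \ref{cor:basisMultFormula} and its equivariant version, Corollary \ref{cor:basisMultEquivariance}. Both parts reduce to checking equalities of coefficients, so the only real content is a clean bookkeeping of which $\sigma_i$ and $\tau_i$ we want to play what role.

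For part (1), I would apply Corollary \ref{cor:basisMultEquivariance} to the product $C_{p_1,q}^{\sigma_1}C_{p,q_2}^{\sigma_2}$ by reading the corollary's $\tau_1\sigma_1$ as $\sigma_1\cdot 1$ (so the inner $\sigma_1$ is trivial and $\tau_1=\sigma_1$), and its $\sigma_2\tau_2$ as $1\cdot\sigma_2$ (so the inner $\sigma_2$ is trivial and $\tau_2=\sigma_2$). The first displayed equation of Corollary \ref{cor:basisMultEquivariance} then reads
\[
C_{p_1,q}^{\sigma_1}C_{p,q_2}^{\sigma_2}=\sum_{\sigma'\in G(\lambda)} s(1,q,p,\sigma')\,C_{p_1,q_2}^{\sigma_1\sigma'\sigma_2},
\]
and $s(1,q,p,\sigma')=\langle C_q,C_p\rangle_{\sigma'}$ by Definition \ref{def:innerProduct}. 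Reindexing via $\sigma'=\tau$ gives $\langle C_q,C_p\rangle_\tau$ as the coefficient of $C_{p_1,q_2}^{\sigma_1\tau\sigma_2}$, which is exactly (1) on the basis; bilinearity of the form and of the algebra multiplication is automatic.

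For part (2), I would invoke the second identity of Corollary \ref{cor:basisMultFormula},
\[
s(\sigma_1,q_1,p_2,\sigma'\sigma_2^{-1})=s(\sigma_2^{-1},p_2,q_1,(\sigma')^{-1}\sigma_1),
\]
and specialise to $\sigma_1=\sigma_2=1$ with $\sigma'=\tau$, $q_1=q$, $p_2=p$. This yields $s(1,q,p,\tau)=s(1,p,q,\tau^{-1})$, i.e.\ $\langle C_q,C_p\rangle_\tau=\langle C_p,C_q\rangle_{\tau^{-1}}$ on basis elements. The general statement for arbitrary $u,v\in W(\lambda)$ then follows by extending bilinearly, noting that the form is $R$-bilinear on both sides by construction.

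There is essentially no main obstacle: the work has already been done in establishing Corollary \ref{cor:basisMultEquivariance} and the symmetry of $s$ in Corollary \ref{cor:basisMultFormula}. The only point that requires mild care is the translation between the corollary's $(\tau_1,\sigma_1,\sigma_2,\tau_2)$ parametrisation and the proposition's $(\sigma_1,\sigma_2)$, which is why I would spell out the reassignment $\tau_1=\sigma_1$, $\tau_2=\sigma_2$, inner $\sigma_i=1$ explicitly before reading off the coefficient.
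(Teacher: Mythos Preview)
Your proposal is correct and follows essentially the same approach as the paper: part (1) via Corollary \ref{cor:basisMultEquivariance} and part (2) via the symmetry identity in Corollary \ref{cor:basisMultFormula}, reducing to basis elements by bilinearity. Your version is simply more explicit about the parameter substitutions than the paper's terse ``follows from'' references.
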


\begin{proof} Since $\langle C_q,C_p \rangle_{\tau}$ is the coefficient of $C_{p_1,q_2}^{\tau}$ in $C_{p_1,q}^{1}  C_{p,q_2}^{1}$ for any $p_1,q_2 \in M(\lambda)$, the first property follows from Corollary \ref{cor:basisMultEquivariance}.

By bilinearity, it suffices to verify the second property in the case $u=C_q, v=C_p$, but in this case the result is immediate from Corollary \ref{cor:basisMultFormula}. This completes the proof.
\end{proof}

If $A$ is a diagram-like algebra, then the definition of the bilinear form becomes more comprehensible. It is given by the next lemma, which follows immediately from the definitions.

\begin{lemma} \label{lem:dgrmLikeBilinearForm} Suppose that $A$ is diagram-like, let $\lambda \in \Lambda$, $p,q \in M(\lambda)$, and $\tau \in G(\lambda)$. From Definition \ref{def:dgrmLike}, we have, for any $p_1,q_2 \in G(\lambda)$, $$ C_{p_1,q}^1 C_{p,q_2}^1 = \kappa C_{p_1,q_2}^\sigma,$$ where the basis element on the right is associated to $\mu \in \Lambda$ depending only on $p$ and $q$, and $\kappa$ and $\sigma$ also depend only on $p,q$. Writing these two variables as functions of $p$ and $q$, we have \[\pushQED{\qed} 
\langle C_q,C_p \rangle_{\tau} = \begin{cases} \kappa(p,q) \chi_{\sigma(p,q)}(\tau) & \textrm{ if $\lambda = \mu(p,q)$, and } \\ 0 \textrm{ otherwise.}
\end{cases} \qedhere
\popQED
\] \end{lemma}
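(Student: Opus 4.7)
The plan is to plug the diagram-like multiplication rule (Definition \ref{def:dgrmLike}) into the definition of the bilinear form (Definition \ref{def:innerProduct}), and then read off the coefficient of $C_{p_1,q_2}^\tau$ using the dependency conditions. By Definition \ref{def:innerProduct}, $\langle C_q, C_p\rangle_\tau$ is the coefficient of $C_{p_1,q_2}^\tau$ in $C_{p_1,q}^1 C_{p,q_2}^1$ taken modulo $I_{<\lambda}$, and this is independent of the choice of $p_1, q_2 \in M(\lambda)$. Diagram-likeness says this product is a single scaled basis element $\kappa_0 C_{p_0,q_0}^{\sigma_0}$ sitting at some level $\mu \in \Lambda$ with $\mu \leq \lambda$ (dependency condition (\ref{item:dl1})). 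Because $\sigma_1 = \sigma_2 = 1$, conditions (\ref{item:dl2}) and (\ref{item:dl3}) force $\mu$, $\kappa_0$, and $\sigma_0$ to depend only on $p$ and $q$; write these as $\mu(p,q)$, $\kappa(p,q)$, $\sigma(p,q)$.

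I would then split on whether $\mu(p,q) = \lambda$ or $\mu(p,q) < \lambda$. In the latter case, the entire product lives in $I_{<\lambda}$, so the coefficient of $C_{p_1,q_2}^\tau$ in its reduction is zero, matching the ``otherwise'' branch. In the former case, condition (\ref{item:dl5}) gives $q_0 = q_2$ and (using $\sigma_2 = 1$) $\sigma_0 = \sigma(p,q)$, so it only remains to check that $p_0 = p_1$. For this I would re-expand $C_{p_1,q}^1 C_{p,q_2}^1$ via Axiom \ref{item:W3} applied to the left factor, together with the dual expansion in Remark \ref{rmk:W3} applied on the right, exactly as in the derivation of Corollary \ref{cor:basisMultFormula}: modulo $I_{<\lambda}$, every surviving basis term has left index $p_1$ and right index $q_2$. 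Since $\kappa(p,q)C_{p_0,q_2}^{\sigma(p,q)}$ sits at level $\lambda$ and so is not killed by $I_{<\lambda}$, comparing these two descriptions forces $p_0 = p_1$. The product is therefore $\kappa(p,q)C_{p_1,q_2}^{\sigma(p,q)}$, and reading off the coefficient of $C_{p_1,q_2}^\tau$ yields $\kappa(p,q)\chi_{\sigma(p,q)}(\tau)$, as claimed.

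The main obstacle is precisely this identification $p_0 = p_1$ in the case $\mu(p,q) = \lambda$: dependency condition (\ref{item:dl4}) by itself only tells us that $p_0$ is some function of $(p_1, \sigma_1, q, p)$, not that it equals $p_1$. Combining the single-basis-element shape imposed by diagram-likeness with the Axiom \ref{item:W3}/Remark \ref{rmk:W3} constraints is what closes the gap. Everything else reduces to bookkeeping with the indicator function $\chi_{\sigma(p,q)}$ and the two cases for $\mu(p,q)$.
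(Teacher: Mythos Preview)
Your argument is correct and is essentially a careful unpacking of what the paper declares to ``follow immediately from the definitions'' (the lemma is stated with an inline \qed{} and no separate proof). You rightly identify the only non-obvious point: the dependency conditions of Definition~\ref{def:dgrmLike} do not by themselves force $p_0 = p_1$ when $\mu(p,q) = \lambda$, and you close this gap via the argument underlying Corollary~\ref{cor:basisMultFormula}. An equally clean alternative is to apply the anti-involution $*$ and use condition~(\ref{item:dl5}) on the flipped product $C_{q_2,p}^{1}C_{q,p_1}^{1}$, but your route through Axiom~\ref{item:W3} and Remark~\ref{rmk:W3} is equivalent and matches the paper's setup.
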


\begin{remark} If one just wishes to work with diagram algebras, it may be better to define a single bilinear form on $W(\lambda)$ as $\langle C_q, C_p\rangle = \kappa(q,p)$. We do not do so because this definition doesn't work well for naive-cellular algebras, and we wanted to be able to compare our Theorem \ref{thm:global} with Graham and Lehrer's results on cellular algebras (as in Section \ref{subsection:Cellular}). \end{remark}

The reason for introducing these bilinear forms is the next proposition. Recall the definition of the ideals $J_q$ (Definition \ref{def:J}), and recall that by Lemma \ref{lem:JIdeal}, if $X$ is a downward closed subset of $\Lambda$, and $\lambda$ is a minimal element of $\Lambda \setminus X$, then the image $\pi_X(J_q)$ in $\faktor{A}{I_X}$ is a left ideal.

\begin{proposition} \label{prop:formBegetIdempt} Let $X$ be a downward closed subset of $\Lambda$, and let $\lambda$ be a minimal element of $\Lambda \setminus X$. Fix $q \in M(\lambda)$.
Then there exists an idempotent $e_q$ in $\faktor{A}{I_X}$ which generates $\pi_X(J_q)$ as a left ideal if and only if the indicator function of the identity in $G(\lambda)$ lies in the $R$-span of the functions \begin{align*} G(\lambda) & \to R \\
\tau & \mapsto \langle C_q , C_p \rangle_{\tau \sigma^{-1}},
\end{align*} for $p \in M(\lambda)$, $\sigma \in G(\lambda)$.
\end{proposition}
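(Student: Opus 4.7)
The plan is to reduce the statement to a single explicit computation by introducing a more tractable equivalent condition. Namely, for any element $e_q \in \pi_X(J_q)$, I would first show that $e_q$ is idempotent and generates $\pi_X(J_q)$ as a left ideal if and only if $C \cdot e_q = C$ for every $C \in \pi_X(J_q)$. The ``if'' direction is immediate: taking $C = e_q$ gives idempotence, and any basis element $C_{p_0, q}^{\tau_0} = C_{p_0, q}^{\tau_0} \cdot e_q$ lies in $\faktor{A}{I_X} \cdot e_q$. For the converse, if $e_q$ is an idempotent generator then any $C \in \pi_X(J_q) = \faktor{A}{I_X} \cdot e_q$ may be written $C = a e_q$, whence $C \cdot e_q = a e_q^2 = a e_q = C$.

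Writing $e_q = \sum_{p, \sigma} \alpha_{p,\sigma} C_{p, q}^{\sigma}$ (which is forced by $e_q \in \pi_X(J_q)$), the next step is to compute $C_{p_0, q}^{\tau_0} \cdot e_q$ explicitly. Applying Corollary~\ref{cor:basisMultFormula} term-by-term and using the equivariance identity $s(\sigma_1, q, p, \rho) = \langle C_q, C_p \rangle_{\sigma_1^{-1} \rho}$ (obtained by comparing Corollary~\ref{cor:basisMultFormula} with Proposition~\ref{prop:bilFormProperties}(1)), one finds that the coefficient of $C_{p_0, q}^{\sigma'}$ in $C_{p_0, q}^{\tau_0} \cdot e_q$ equals $\sum_{p, \sigma} \alpha_{p, \sigma} \langle C_q, C_p \rangle_{\tau_0^{-1} \sigma' \sigma^{-1}}$. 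The condition ``$C_{p_0, q}^{\tau_0} \cdot e_q = C_{p_0, q}^{\tau_0}$ for every basis element'' thus becomes
\[
\sum_{p, \sigma} \alpha_{p, \sigma} \langle C_q, C_p \rangle_{\tau_0^{-1} \sigma' \sigma^{-1}} = \delta_{\sigma', \tau_0}
\]
for all $p_0, \tau_0, \sigma'$. Since the left side is independent of $p_0$ and depends on $(\tau_0, \sigma')$ only through $\tau := \tau_0^{-1} \sigma'$, this collapses to the single family of equations $\sum_{p, \sigma} \alpha_{p, \sigma} \langle C_q, C_p \rangle_{\tau \sigma^{-1}} = \chi_1(\tau)$ for $\tau \in G(\lambda)$, where $\chi_1$ denotes the indicator of the identity.

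The resulting equation is precisely the statement that $\chi_1$ is an $R$-linear combination of the functions $\tau \mapsto \langle C_q, C_p \rangle_{\tau \sigma^{-1}}$ with coefficients $\alpha_{p, \sigma}$. Combined with the equivalence of the first paragraph, this gives both directions of the proposition simultaneously: in the forward direction take the $\alpha_{p,\sigma}$ to be the coefficients from the spanning hypothesis and form $e_q$ accordingly, while in the reverse direction read the $\alpha_{p,\sigma}$ off the expansion of the given idempotent generator. The main obstacle is the equivariance identity for $s$ and the careful bookkeeping of the group elements appearing as subscripts on the bilinear form; once that is nailed down, the remainder of the argument is formal.
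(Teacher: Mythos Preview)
Your proposal is correct and follows essentially the same route as the paper's proof: the paper likewise reduces to the criterion $C\cdot e_q=C$ for all $C\in\pi_X(J_q)$ (stated separately as Lemma~\ref{lem:idempotentGenCharacterisation}), expands $e_q$ in the basis, computes the product using Proposition~\ref{prop:bilFormProperties}(1), and equates coefficients to obtain exactly your condition $\sum_{p,\sigma}\alpha_{p,\sigma}\langle C_q,C_p\rangle_{\tau\sigma^{-1}}=\chi_1(\tau)$. The only cosmetic difference is that you phrase the key step via the identity $s(\sigma_1,q,p,\rho)=\langle C_q,C_p\rangle_{\sigma_1^{-1}\rho}$ and Corollary~\ref{cor:basisMultFormula}, whereas the paper invokes Proposition~\ref{prop:bilFormProperties}(1) directly; these are equivalent reformulations of the same computation.
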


We will use the following lemma in the proof.

\begin{lemma} \label{lem:idempotentGenCharacterisation} Let $A$ be an $R$-algebra, let $J$ be a left ideal of $A$, and let $e \in J$. Then $y e = y$ for every $y \in J$ if and only if $e$ is idempotent and generates $J$ as a left ideal.
\end{lemma}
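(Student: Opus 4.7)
The plan is to prove the two directions of the biconditional separately. This is a standard characterization of idempotent generators of left ideals, and I expect no serious obstacle; the work is essentially verifying two short chains of equalities.

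For the forward direction, I would assume that $ye = e$ for every $y \in J$. Since $e \in J$ itself, taking $y = e$ immediately yields $e^2 = e$, establishing idempotence. To show that $e$ generates $J$ as a left ideal, I would note that for any $y \in J$ the hypothesis gives $y = ye \in Ae$, so $J \subseteq Ae$. The reverse inclusion $Ae \subseteq J$ is automatic: $e \in J$ and $J$ is a left ideal of $A$, so $Ae \subseteq J$. Together, $J = Ae$.

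For the backward direction, I would assume that $e$ is idempotent and that $J = Ae$ as a left ideal. Then every $y \in J$ can be written $y = ae$ for some $a \in A$, and the computation
\[
ye = (ae)e = a(e^2) = ae = y
\]
uses only associativity and $e^2 = e$.

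The only subtlety worth flagging is the convention that ``$e$ generates $J$ as a left ideal'' means $J = Ae$; since $A$ is unital (a standing assumption in the paper), $e \in Ae$, so there is no need to separately enforce $e \in J$ beyond the hypothesis of the lemma. No further machinery from the paper is required.
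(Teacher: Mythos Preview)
Your proof is correct and follows essentially the same approach as the paper's: both directions are handled by the same short chains of equalities, with the paper's version simply being terser (it omits the explicit check that $Ae \subseteq J$, taking this as obvious). One small slip: in your forward direction you wrote ``assume that $ye = e$ for every $y \in J$'' where you clearly meant $ye = y$, as your subsequent reasoning uses the correct form.
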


\begin{proof} If $e$ is idempotent and generates $J$, then every element $y \in J$ has the form $x e$, so $y e = x e^2 = xe = y$.

Conversely, if $ye = y$ for every $y \in J$, then $e$ generates $J$ as a left ideal, and, since we have in particular that $e \in J$, setting $y = e$ gives that $e$ is idempotent. \end{proof}

\begin{proof}[Proof of Proposition \ref{prop:formBegetIdempt}] In this proof, we will commit the abuse of Remark \ref{rmk:basisAbuse}, and write $C_{p,q}^\sigma$ for the image of that basis vector in $\faktor{A}{I_X}$.

By Lemma \ref{lem:idempotentGenCharacterisation}, an element $e_q$ of $\pi_X(J_q)$ is an idempotent generator of $\pi_X(J_q)$ if and only if $y e_q = y$ for all $y \in \pi_X(J_q)$, if and only if $C_{p,q}^\sigma e_q = C_{p,q}^\sigma$ for every basis element $C_{p,q}^\sigma$ of $\pi_X(J_q)$.

Expressing $e_q$ in this basis gives $e_q = \sum_{\substack{p' \in M(\lambda) \\ \sigma' \in G(\lambda)}} \alpha_{p',\sigma'} C_{p',q}^{\sigma'}$, and we write:
\begin{align*}
    C_{p,q}^\sigma e_q & = C_{p,q}^\sigma \sum_{p', \sigma'} \alpha_{p',\sigma'} C_{p',q}^{\sigma'} \\
    & = \sum_{p', \sigma'} \alpha_{p',\sigma'} C_{p,q}^\sigma C_{p',q}^{\sigma'} \\
    & = \sum_{p', \sigma'} \alpha_{p',\sigma'} \sum_{\tau' \in G(\lambda)} \langle C_q, C_{p'}\rangle_{\tau'} C_{p,q}^{\sigma \tau' \sigma'} \\
    & =  \sum_{\tau'} \sum_{p', \sigma'} \alpha_{p',\sigma'} \langle C_q,  C_{p'}\rangle_{\tau'} C_{p,q}^{\sigma \tau' \sigma'} \\
    & =  \sum_{\tau}( \sum_{p', \sigma'} \alpha_{p',\sigma'} \langle C_q,  C_{p'}\rangle_{\tau (\sigma')^{-1}}) C_{p,q}^{\sigma \tau},
\end{align*}
where the third equality is by Proposition \ref{prop:bilFormProperties}. This last expression is equal to $C_{p,q}^\sigma$ if and only if $\sum_{p', \sigma'} \alpha_{p',\sigma'} \langle C_q,  C_{p'}\rangle_{\tau (\sigma')^{-1}}$ is equal to $1$ if $\tau$ is the identity, and zero otherwise.

That is, the coefficients $\alpha_{p',\sigma'}$ express the indicator function of the identity in $G(\lambda)$ as a linear combination of the $\tau \mapsto \langle C_q,  C_{p'}\rangle_{\tau (\sigma')^{-1}}$ if and only if they define an element $e_q$ (as a linear combination of $C_{p',q}^{\sigma'}$) which satisfies $C_{p,q}^\sigma e_q = C_{p,q}^\sigma$ for each $p$ and $\sigma$.

In particular, the indicator function can be expressed as such a linear combination if and only if $\pi_X(J_q)$ is principal and generated by an idempotent, as required. \end{proof}

We will use Proposition \ref{prop:formBegetIdempt} in conjunction with the following lemma. Note that this actually verifies something stronger than the hypothesis of that proposition, since it expresses the indicator function as a span of only those functions corresponding to a particular element $\sigma$.

\begin{lemma} \label{lem:ezCase} Let $\lambda \in \Lambda$, and fix $q \in M(\lambda)$. If there exists $v \in W(\lambda)$ such that for some $\sigma \in G(\lambda)$ we have $$\langle C_q, v \rangle_{\tau} = \begin{cases} 1 & \textrm{ if } \tau=\sigma, \textrm{ and} \\ 0 & \textrm{ otherwise,} \end{cases}$$ then the indicator function of $1 \in G(\lambda)$ lies in the span of the functions \begin{align*} G(\lambda) & \to R \\
\tau & \mapsto \langle C_q , C_p \rangle_{\tau \sigma},
\end{align*} for $p \in M(\lambda)$.
\end{lemma}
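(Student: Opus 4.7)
The plan is to use $R$-bilinearity of $\langle -, - \rangle_\tau$ together with a change of variables in the group $G(\lambda)$. Expand the hypothesised element $v \in W(\lambda)$ in the canonical basis as $v = \sum_{p \in M(\lambda)} \beta_p C_p$ with $\beta_p \in R$. Bilinearity of the form (Definition \ref{def:innerProduct}) then gives, for each $\tau \in G(\lambda)$,
$$\langle C_q, v \rangle_\tau = \sum_{p \in M(\lambda)} \beta_p \langle C_q, C_p \rangle_\tau.$$
By hypothesis, viewed as a function of $\tau$, the left-hand side is the indicator function $\chi_\sigma$ of $\sigma \in G(\lambda)$.

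The second step is simply to substitute $\tau \sigma$ for $\tau$ in the above identity. Since $\tau \sigma = \sigma$ if and only if $\tau = 1$, the map $\tau \mapsto \chi_\sigma(\tau \sigma)$ is precisely the indicator function $\chi_1$ of $1 \in G(\lambda)$. Hence
$$\chi_1(\tau) = \sum_{p \in M(\lambda)} \beta_p \langle C_q, C_p \rangle_{\tau \sigma},$$
which exhibits $\chi_1$ as the desired $R$-linear combination of the functions $\tau \mapsto \langle C_q, C_p \rangle_{\tau \sigma}$ indexed by $p \in M(\lambda)$. There is no substantive obstacle here: the only content is that the group law on $G(\lambda)$ translates an indicator function supported at $\sigma$ to one supported at $1$, while bilinearity reduces the single vector $v$ to the basis vectors $C_p$. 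In particular, no property of the ring $R$ beyond commutativity, and no naive-cellular axiom beyond what is already baked into Definition \ref{def:innerProduct}, is needed.
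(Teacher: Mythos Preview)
Your proof is correct and follows essentially the same approach as the paper: expand $v$ in the basis $C_p$, use bilinearity to write $\chi_\sigma(\tau) = \sum_p \beta_p \langle C_q, C_p\rangle_\tau$, then substitute $\tau\sigma$ for $\tau$ to convert $\chi_\sigma$ into $\chi_1$. The paper's version is slightly more terse but the content is identical.
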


\begin{proof} The hypothesis gives $\chi_{\sigma}(\tau) = \langle C_q, v \rangle_{\tau}$.

Expressing $v$ in the basis $C_p$ of $W(\lambda)$ gives $$v = \sum_{p \in M(\lambda)} \alpha_p C_p$$ for coefficients $\alpha_p \in R$. We may then write $$\chi_{1}(\tau)= \chi_{\sigma}(\tau \sigma) = \langle C_q , v \rangle_{\tau \sigma} =\sum_{p \in M(\lambda)} \alpha_p \langle C_q ,C_p \rangle_{\tau \sigma},$$ which is of the desired form.
\end{proof}

\section{Proof of Theorem \ref{thm:global}} \label{section:GlobalResults}

In this section we will prove Theorem \ref{thm:global}. To do so, we will apply the following observation from \cite{Boyde} inductively.

\begin{theorem}[{\cite[Theorem 3.3]{Boyde}}] \label{thm:quotientingPlus} Let $A$ be an $R$-algebra, let $M$ be a right $A$-module and let $N$ be a left $A$-module. Let $I$ be a twosided ideal of $A$ which acts trivially on $M$ and $N$, and which as a left ideal is a direct sum $I \cong J_1 \oplus \dots \oplus J_k$. Suppose that each $J_i$ is generated as a left ideal by finitely many commuting idempotents. Then
\[\pushQED{\qed} 
\Tor^A_*(M,N) \cong \Tor^{\faktor{A}{I}}_*(M,N). \qedhere
\popQED
\]\end{theorem}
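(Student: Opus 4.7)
The plan is to establish the key vanishing $\Tor^A_q(M, A/I) = 0$ for $q > 0$, together with the natural identification $M \otimes_A A/I \cong M$; a standard base-change-of-rings argument then converts a projective resolution of $M$ over $A$ into one over $A/I$, yielding the desired $\Tor$ isomorphism.

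First, I would reduce to the case where each $J_i$ is generated by a single idempotent. Given commuting idempotent generators $e_{i,1}, \dots, e_{i,m}$ of $J_i$, set $f_i := 1 - \prod_j (1 - e_{i,j})$. The product of the commuting idempotents $(1 - e_{i,j})$ is again idempotent, so $f_i$ is idempotent. Expanding the product shows $f_i \in \sum_j A e_{i,j} = J_i$, while $e_{i,j} f_i = e_{i,j}$ for each $j$ (since $e_{i,j}(1 - e_{i,j}) = 0$), so $e_{i,j} \in A f_i$. Thus $A f_i = J_i$, and henceforth I may assume $J_i = A e_i$ for a single idempotent.

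Next, the decomposition $A = A e_i \oplus A(1-e_i)$ of left $A$-modules shows that each $A e_i$, and hence $I = \bigoplus_i A e_i$, is projective as a left $A$-module. Moreover $m \otimes e_i = m \otimes e_i^2 = m e_i \otimes e_i = 0$ in $M \otimes_A A e_i$ (using that $m e_i = 0$, because $e_i \in I$ acts trivially on $M$), so $M \otimes_A I = 0$. Combined with left-projectivity, this gives $\Tor^A_*(M, I) = 0$. The long exact sequence of $\Tor^A_*(M, -)$ attached to $0 \to I \to A \to A/I \to 0$ then identifies $\Tor^A_q(M, A/I) \cong \Tor^A_q(M, A)$ for all $q$, which vanishes for $q > 0$ and equals $M$ for $q = 0$.

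For the final step, I would pick a projective resolution $Q_\bullet \to M$ by right $A$-modules. Each $Q_i \otimes_A A/I$ is a summand of a free right $A/I$-module and hence projective over $A/I$, and the vanishing just established implies that $Q_\bullet \otimes_A A/I \to M$ is a resolution. Computing $\Tor^{A/I}_*(M, N)$ from this resolution, and applying the associativity isomorphism $(Q_\bullet \otimes_A A/I) \otimes_{A/I} N \cong Q_\bullet \otimes_A N$ (which holds because $A/I \otimes_{A/I} N \cong N$, since $I$ acts trivially on $N$), gives $\Tor^{A/I}_*(M, N) \cong H_*(Q_\bullet \otimes_A N) = \Tor^A_*(M, N)$. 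The hard part is the key vanishing $\Tor^A_*(M, I) = 0$: it uses the idempotent hypothesis in two places simultaneously, to obtain left-projectivity of $I$ (ensuring higher $\Tor$s vanish) and to force $M e_i = 0$ via the idempotency calculation (ensuring the degree-zero $\Tor$ vanishes). Without idempotent generators, neither conclusion would follow from triviality of the $I$-action alone.
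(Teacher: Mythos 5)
This statement is imported verbatim from \cite[Theorem 3.3]{Boyde}; the present paper gives no proof of it (the \(\qed\) in the statement signals that it is taken as a black box), so there is no in-paper argument to compare against. Your proof is correct and self-contained: the inclusion--exclusion element \(f_i = 1 - \prod_j(1-e_{i,j})\) is indeed an idempotent with \(Af_i = J_i\), so \(I \cong \bigoplus_i Af_i\) is projective as a left \(A\)-module while \(M \otimes_A I = 0\) because \(m \otimes af_i = maf_i \otimes f_i = 0\); the long exact sequence then gives \(\Tor_q^A(M, A/I) = 0\) for \(q>0\) and \(M\) for \(q=0\), so base change of a projective resolution along \(A \to \faktor{A}{I}\) yields the isomorphism. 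This is the standard change-of-rings argument and matches the approach of the cited source, correctly using the direct-sum hypothesis for projectivity of \(I\) and the commutativity hypothesis only within each \(J_i\).
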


\begin{remark} The hypotheses of this theorem are reminiscent of the definition of a \emph{block} of an associative algebra (see for example \cite[Section 1.8]{Benson}) though of course we are asking for something much weaker than a block decomposition. \end{remark}

\begin{proof}[Proof of Theorem \ref{thm:global}]
We are given
\begin{itemize}
    \item a {\wossit} algebra $A$ over a ring $R$ (commutative with unit), with {\wossit} datum $(\Lambda, G, M, C, *)$, and
    \item two downward closed subsets $Y \subset X$ of $\Lambda$, with $X \setminus Y$ finite.
\end{itemize}

We assume that for each $\lambda \in X \setminus Y$ we have the following.
\begin{itemize}
    \item For every $q \in M(\lambda)$, there exists $v \in W(\lambda)$ such that $$\langle C_q, v \rangle_{\tau} = \begin{cases} 1 & \textrm{ if } \tau=1, \textrm{ and} \\ 0 & \textrm{ otherwise.} \end{cases}$$
\end{itemize}

In this situation, we must show that for any right $A$-module $M$ and left $A$-module $N$, where $I_X$ acts trivially on both, we have
$$\Tor_*^{\faktor{A}{I_{Y}}}(M,N) \cong \Tor_*^{\faktor{A}{I_{X}}}(M,N).$$

Since $X \setminus Y$ is assumed to be finite and $Y$ is arbitrary downward closed, it suffices by induction and Lemma \ref{lem:YcupLambda} to prove that for any minimal element $\lambda$ of $X \setminus Y$ we have $$\Tor_*^{\faktor{A}{I_{Y}}}(M,N) \cong \Tor_*^{\faktor{A}{I_{Y \cup \{\lambda\}}}}(M,N).$$

By Lemma \ref{lem:decomposition}, we have an isomorphism of left $A$-modules $$\faktor{I_{Y \cup \{\lambda \}}}{I_Y} \cong \bigoplus_{q \in M(\lambda)} \pi_X(J_q),$$ so by Theorem \ref{thm:quotientingPlus} applied to the ideal $\faktor{I_{Y \cup \{\lambda \}}}{I_Y}$ of the algebra $\faktor{A}{I_Y}$, it suffices to show that each left ideal $\pi_X(J_q)$ of $\faktor{A}{I_X}$ is generated by a single idempotent, but this follows from the assumption by Proposition \ref{prop:formBegetIdempt} and Lemma \ref{lem:ezCase}. This completes the proof.
\end{proof}

\section{Specialisation of Theorem \ref{thm:global} to subalgebras of the Brauer algebras} \label{section:Specialisation}

Our first job is to give a concrete interpretation of the bilinear form of Definition \ref{def:innerProduct} in the examples. Since all of the examples we consider here are obtained by restriction from the Brauer algebras, it suffices to do so in that case.

\begin{lemma} \label{lem:formInterpretation} Consider the Brauer algebra $\Brauer_n(\delta)$ (Example \ref{ex:Brauer}). Let $t \in \underline{n}$, and let $q,p \in M(t)$ be link states with $t$ defects. Form the pair-graph $\Gamma_{\langle q,p \rangle}$ (Definition \ref{def:pairGraph}).

Recall that the pair-set $S_{\langle q,p \rangle} \subset \pi_0(\Gamma_{\langle q,p \rangle})$ is the intersection of those path components containing a defect vertex of both $q$ and $p$. Note that $S_{\langle q,p \rangle} \leq t$.

The bilinear forms $\langle C_q,C_p \rangle_{\tau}$ in the Brauer algebra $\Brauer_n(\delta)$ are determined as follows:
\begin{itemize}
    \item If $\norm{S_{\langle q,p \rangle}} < t$, then $\langle C_q,C_p \rangle_{\tau} = 0$ for all $\tau$.
    \item If $\norm{S_{\langle q,p \rangle}} = t$, then $\langle C_q,C_p \rangle_{\tau} = \delta^i \chi_{\sigma(p,q)}(\tau)$, for some $\sigma = \sigma(p,q) \in G(t)$, where $i$ is the number of components in $\pi_0(\Gamma_{\langle q,p \rangle})$ not hit by either copy of $\underline{t}$.
\end{itemize}
\end{lemma}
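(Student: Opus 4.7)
The plan is to apply Lemma \ref{lem:dgrmLikeBilinearForm} (which reduces the bilinear form in a diagram-like algebra to the product of two basis elements) together with the explicit Brauer product formula in Proposition \ref{prop:BrauerDGL}. Fix any $p_1, q_2 \in M(t)$; by definition, $\langle C_q, C_p\rangle_\tau$ is the coefficient of $C_{p_1, q_2}^\tau$ in the product $C_{p_1, q}^1 \cdot C_{p, q_2}^1$. By Proposition \ref{prop:BrauerDGL},
\[ C_{p_1, q}^1 \cdot C_{p, q_2}^1 = \delta^i\, C_{p', q'}^{\sigma'}, \]
where $i$ is the number of loops in $\Gamma_{\langle q, p\rangle}$ and the label $t'$ of the output satisfies $t' = t$ if and only if $\norm{S_{\langle q, p\rangle}} = t$.

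Case 1 ($\norm{S_{\langle q, p\rangle}} < t$): The output lies in $I_{<t}$, whose basis is disjoint from the level-$t$ basis elements $C_{p_1, q_2}^\tau$, so $\langle C_q, C_p\rangle_\tau = 0$ for every $\tau$. Case 2 ($\norm{S_{\langle q, p\rangle}} = t$): Here $t' = t$. Dependency Condition (\ref{item:dl5}) gives $q' = q_2$, and applying $*$ (or inspecting the Brauer concatenation directly on the $\underline{n}$ side) gives the symmetric statement $p' = p_1$. Since $\sigma_1 = \sigma_2 = 1$, Dependency Condition (\ref{item:dl3}) shows $\sigma'$ depends only on $p$ and $q$; call it $\sigma(p,q)$. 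Thus the product equals $\delta^i C_{p_1, q_2}^{\sigma(p,q)}$, and extracting the coefficient of $C_{p_1, q_2}^\tau$ yields $\langle C_q, C_p\rangle_\tau = \delta^i \chi_{\sigma(p,q)}(\tau)$.

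The last step is to identify $i$ with the number of components of $\pi_0(\Gamma_{\langle q,p\rangle})$ missed by both defect maps $\underline{t} \to \Gamma_{\langle q,p\rangle}$. The pair graph has maximum degree $2$, so its components are paths and cycles; vertices of degree $0$ or $1$ are precisely the defects (a degree-$0$ vertex is a defect of both $q$ and $p$; a degree-$1$ vertex is a defect of exactly one). A double count using $\norm{S_{\langle q,p\rangle}} = t$ and the fact that $q$ and $p$ each have $t$ defects forces each component of $S_{\langle q,p\rangle}$ to contain exactly one defect from $q$ and one from $p$; the remaining components therefore contain no defects and must be cycles, which is exactly what the Brauer product sees as loops.

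The only point of genuine friction is deducing $p' = p_1$: Definition \ref{def:dgrmLike}(\ref{item:dl5}) only records the ``right-hand'' equality $q' = q_2$, so one needs to produce the ``left-hand'' version either by invoking the anti-involution $*$ (which swaps $p$ and $q$ coordinates and reverses multiplication order) or by appealing transparently to the diagrammatic description of Brauer concatenation, where the restriction of the product diagram to $\underline{n}$ is just $p_1$. Everything else is an immediate unwinding of Proposition \ref{prop:BrauerDGL} and Lemma \ref{lem:dgrmLikeBilinearForm}.
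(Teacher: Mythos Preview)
Your proof is correct and follows exactly the paper's approach, which is simply ``combine Lemma~\ref{lem:dgrmLikeBilinearForm} and Proposition~\ref{prop:BrauerDGL}''. One small remark: your ``point of friction'' about $p'=p_1$ is already absorbed into Lemma~\ref{lem:dgrmLikeBilinearForm}, whose statement records the product as $\kappa C_{p_1,q_2}^{\sigma}$ with $\kappa,\sigma$ depending only on $p,q$; so you need not rederive that step.
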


\begin{proof} Combine Lemma \ref{lem:dgrmLikeBilinearForm} and Proposition \ref{prop:BrauerDGL}. \end{proof}

\begin{definition} \label{hypothesisDagger} Let $A$ be a diagram-like subalgebra of $\Brauer_n(\delta)$ on a subset of the diagrams, with naive-cellular datum $(\Lambda, M, G, C, *)$ obtained by restriction (Definition \ref{def:restriction}) from the datum for $\Brauer_n(\delta)$ given in Definition \ref{def:BrauerDGL}. Let $t$ be an integer with $0 \leq t \leq n-1$. We will say that $A$ \emph{satisfies Hypothesis $(\dag)$ at $t$} if:
\begin{itemize}
    \item For all $q \in M(t)$ there exists $p \in M(t)$ such that in the {\sth} $\Gamma_{\langle q,p \rangle}$, we have that the cardinality of the pair-set $\norm{S_{\langle q,p \rangle}}$ is $t$, and that $\delta^i$ is invertible, where $i$ is the number of components in $\pi_0(\Gamma_{\langle q,p \rangle})$ not hit by either copy of $\underline{t}$.
\end{itemize}
\end{definition}

The following theorem specialises Theorem \ref{thm:global} to subalgebras of the Brauer algebras, and is what we will use in our first application. Recall that $I_{\leq t}$ (Definition \ref{def:I}) is the twosided ideal of $A$ spanned by diagrams with at most $t$ left-to-right connections.

\begin{theorem} \label{thm:subalgebrasOfBrauer} Let $A$ be a diagram-like subalgebra of $\Brauer_n(\delta)$ on a subset of the partitions, with naive-cellular datum $(\Lambda, M, G, C, *)$ obtained by restriction from the datum for $\Brauer_n(\delta)$ given in Definition \ref{def:BrauerDGL}. If there exist integers $a$ and $b \leq n-1$ such that $A$ satisfies Hypothesis $(\dag)$ for all $t$ with $a < t \leq b$, then the quotient map induces an isomorphism
$$\Tor_*^{\faktor{A}{I_{\leq a}}}(\t,\t) \cong \Tor_*^{\faktor{A}{I_{\leq b}}}(\t,\t).$$
\end{theorem}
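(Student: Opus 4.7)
The plan is to apply Theorem \ref{thm:global} with $Y = \{t \in \Lambda \mid t \leq a\}$ and $X = \{t \in \Lambda \mid t \leq b\}$, taking $M = N = \t$. Both $Y$ and $X$ are plainly downward closed in the totally ordered poset $\Lambda$, and $X \setminus Y$ is finite. Since $b \leq n-1$, the ideal $I_{\leq b}$ is spanned by diagrams with strictly fewer than $n$ left-to-right connections, and so acts by zero on the trivial module $\t$ by definition. By Proposition \ref{prop:dgrmLikeIsWossit}, $A$ is {\wossit}, so Theorem \ref{thm:global} is applicable, and its conclusion in this setting is exactly the desired Tor isomorphism.

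The real content is the verification of the bilinear-form hypothesis of Theorem \ref{thm:global} for each $t$ with $a < t \leq b$: for every $q \in M(t)$ we must produce a vector $v \in W(t)$ and an element $\sigma \in G(t)$ such that $\langle C_q, v \rangle_\tau$ is the indicator function of $\sigma$ on $G(t)$. Hypothesis $(\dag)$ at $t$ delivers a link state $p \in M(t)$ with $\norm{S_{\langle q,p \rangle}} = t$ and with $\delta^i$ invertible, where $i$ is the number of components of $\pi_0(\Gamma_{\langle q,p \rangle})$ not hit by either copy of $\underline{t}$. Feeding this $p$ into the second bullet of Lemma \ref{lem:formInterpretation} yields
$$\langle C_q, C_p \rangle_\tau = \delta^i \chi_{\sigma(p,q)}(\tau).$$
Setting $v := \delta^{-i} C_p \in W(t)$, which is permissible since $\delta^i$ is a unit, and $\sigma := \sigma(p,q) \in G(t)$, one has $\langle C_q, v \rangle_\tau = \chi_\sigma(\tau)$, which is precisely the required condition.

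No serious obstacle is present: the theorem is essentially a translation step, swapping the combinatorial Hypothesis $(\dag)$ for the bilinear-form hypothesis of Theorem \ref{thm:global} via Lemma \ref{lem:formInterpretation} and invertibility of $\delta^i$. Having verified the hypothesis at every $t \in X \setminus Y$, an appeal to Theorem \ref{thm:global} then produces the stated isomorphism of $\Tor$ groups.
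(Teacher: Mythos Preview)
Your proof is correct and follows essentially the same approach as the paper: apply Theorem \ref{thm:global} with $X=\{t\in\Lambda\mid t\leq b\}$ and $Y=\{t\in\Lambda\mid t\leq a\}$, then for each $t\in X\setminus Y$ use Hypothesis $(\dag)$ to produce $p$ and set $v=\delta^{-i}C_p$, invoking Lemma \ref{lem:formInterpretation} to verify the bilinear-form condition. If anything, you are slightly more explicit in checking the side conditions (downward closedness, finiteness of $X\setminus Y$, and that $A$ is {\wossit} via Proposition \ref{prop:dgrmLikeIsWossit}).
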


\begin{proof} We wish to apply Theorem \ref{thm:global} with $X = \{t \in \Lambda \mid t \leq b \}$ and $Y = \{t \in \Lambda \mid t \leq a \}$, noting that $b \leq n-1$, so $I_X$ acts trivially on $\t$.

To verify the hypothesis, we must fix $q \in M(t)$ and find $v \in W(t)$ such that $\langle C_q,v\rangle_\tau$, regarded as a function of $\tau$, is the indicator function of some $\sigma \in G(t) = \Sigma_t$.

Letting $p$ be as given in the hypotheses, since $\delta^i$ is invertible, we may form the element $$v = \frac{1}{\delta^i} C_p \in W(t),$$ which will suffice by Lemma \ref{lem:formInterpretation}. \end{proof}

\section{Application: global results for Jones annular algebras} \label{section:Jones}

\subsection{Jones annular algebras}

Recall that the Jones annular algebra $\Jones_n(\delta)$ is diagram-like, with naive-cellular datum obtained by restriction from that of the Brauer algebra $\Brauer_n(\delta)$ (Example \ref{ex:Jones}).

We have the following lemma (see e.g. \cite{Boyde2}):

\begin{lemma} \label{lem:topThingJ} The quotient $\faktor{\Jones_n(\delta)}{I_{\leq n-1}}$ of the topological partition algebra by the twosided ideal $I_{\leq n-1}$ spanned by diagrams having at most $n-1$ left-to-right connections is the group algebra $R C_n$ of the cyclic group of order $n$.  \qed \end{lemma}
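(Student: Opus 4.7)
The plan is to combine the diagram-like structure of $\Jones_n(\delta)$ from Example \ref{ex:Jones} with the explicit basis description of quotients from Lemma \ref{lem:quotientBasis} and the multiplication formula of Proposition \ref{prop:BrauerDGL}.

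First, I would identify the basis of the quotient. By Lemma \ref{lem:quotientBasis}, $\faktor{\Jones_n(\delta)}{I_{\leq n-1}}$ is $R$-free on the images of those $C_{p,q}^\sigma$ whose associated $t \in \Lambda$ satisfies $t > n-1$, forcing $t = n$ (which lies in $\Lambda$ since $n - n = 0$ is even). Now an annular link state on $\underline{n}$ with $n$ defects must have every vertex as a defect, so $M(n)$ consists of a single element $p_0$. By the Jones annular datum of Example \ref{ex:Jones}, $G(n) = C_n$. Thus the quotient is $R$-free on symbols $\overline{C_{p_0,p_0}^\sigma}$ indexed by $\sigma \in C_n$, and it only remains to match the multiplication with that of $RC_n$.

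Next, I would compute products using Proposition \ref{prop:BrauerDGL}:
$$C_{p_0,p_0}^{\sigma_1} \cdot C_{p_0,p_0}^{\sigma_2} = \delta^i C_{p,q}^{\sigma'},$$
where $i$ is the number of loops in $\Gamma_{\langle p_0, p_0 \rangle}$, and the product lies in the top stratum $t = n$ exactly when $\norm{S_{\langle p_0, p_0 \rangle}} = n$. Since $p_0$ has no arc-parts, $\Gamma_{\langle p_0, p_0 \rangle}$ has no edges at all: it is just $n$ isolated defect vertices. Hence $i = 0$, all $n$ components lie in $S_{\langle p_0, p_0 \rangle}$, and the product equals $\overline{C_{p_0,p_0}^{\sigma'}}$ for some $\sigma' \in C_n$. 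The nontrivial step is then to identify $\sigma'$: unwinding Definition \ref{def:BrauerDGL}, the diagram $C_{p_0,p_0}^\sigma$ connects the $i$th defect on the right to the $\sigma(i)$th defect on the left, and stacking two such diagrams composes these bijections. This yields $\sigma' = \sigma_1 \sigma_2$ (up to conventional ordering), identifying the quotient with $RC_n$.

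The main obstacle is the final bookkeeping step: verifying directly from Definition \ref{def:BrauerDGL} that stacking two all-through-strand annular diagrams composes their defect bijections as group elements of $C_n$. Everything else is a straightforward specialisation of results already in the paper, but this identification requires carefully tracking the ordering conventions in the defect indexing so that the algebra product agrees with the group product in $C_n$; the choice of convention is immaterial to the conclusion, since both orders give an isomorphism onto $RC_n$.
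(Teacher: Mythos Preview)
Your proof is correct. The paper does not actually prove this lemma: it is stated with a \qed\ and a pointer to \cite{Boyde2}, treating it as a known fact. Your argument is a valid self-contained derivation using the paper's own machinery (Lemma~\ref{lem:quotientBasis}, Example~\ref{ex:Jones}, Proposition~\ref{prop:BrauerDGL}, and Definition~\ref{def:BrauerDGL}), and your identification of the basis, the vanishing of loops in $\Gamma_{\langle p_0,p_0\rangle}$, and the composition $\sigma' = \sigma_1\sigma_2$ are all correct.
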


Recall the definition of the pair graph $\Gamma_{\langle q, p \rangle}$ (Definition \ref{def:pairGraph}).

\begin{lemma} \label{lem:Jones} Let $q \in M(t)$ be an annular link state in $\Jones_n(\delta)$, for $t \geq 0$. Let $p \in M(t)$ be the link state obtained by `rotating $q$ by one place', so that (mod $n$) if $i$ and $j$ are connected in $q$ then $i+1$ and $j+1$ are connected in $p$.

Then, for each $i \in \underline{n}$, either $i$ has a defect in $q$, or $i$ is connected to $i+1$ in $\Gamma_{\langle q, p \rangle}$. \end{lemma}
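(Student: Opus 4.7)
Fix $i\in\underline{n}$ with $i$ not a defect of $q$, and set $j=q(i)$. Both the $q$-edge $\{i,j\}$ and the $p$-edge $\{i+1,j+1\}$ (its rotation) lie in $\Gamma_{\langle q,p\rangle}$. The trivial cases are $j=i+1$ (the $q$-edge $\{i,i+1\}$ does it directly) and $j=i-1$ (the $p$-edge $\{i,i+1\}$, obtained by rotating $\{i-1,i\}$, does it); henceforth assume $j\neq i\pm 1$. The annular condition applied to $\{i,j\}$ places all defects on one side of the arc; call the other side $S$, an open cyclic interval containing no defects. The non-crossing clause of Definition~\ref{def:AnnularLS} forces every $q$-arc with an endpoint in $S$ to have both endpoints in $S$, so $q$ restricts to a perfect non-crossing matching on $S$ (every vertex of $S$ must lie in a $q$-arc with its partner also in $S$).

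The main work is a combinatorial lemma on a linear interval: \emph{if $M$ is a non-crossing perfect matching on $\{1,\dots,2\ell\}$ and $M'=\{\{a+1,b+1\}\mid\{a,b\}\in M\}$ is its shift on $\{2,\dots,2\ell+1\}$, then $M\cup M'$ is connected as a graph on $\{1,\dots,2\ell+1\}$.} I would prove this by induction on $\ell$. Let $m=M(1)$, and split $M$ into the arc $\{1,m\}$ together with non-crossing matchings $M_{1}$ on $\{2,\dots,m-1\}$ and $M_{2}$ on $\{m+1,\dots,2\ell\}$, each of size strictly less than $\ell$. The inductive hypothesis applied to each piece gives that $M_{1}\cup M'_{1}$ is connected on $\{2,\dots,m\}$ and that $M_{2}\cup M'_{2}$ is connected on $\{m+1,\dots,2\ell+1\}$. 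The $M$-arc $\{1,m\}$ then attaches the vertex $1$ to the first piece, and the $M'$-arc $\{2,m+1\}$ (the shift of $\{1,m\}$) stitches the two pieces together via the vertices $2$ and $m+1$. Degenerate cases (when $m=2$ or $m=2\ell$, so that one sub-matching is empty) are handled by the same argument.

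To finish, relabel the closed cyclic interval $S\cup\{i,j\}$ as $\{0,1,\dots,k\}$ so that the $q$-matching on $S$ becomes a non-crossing matching on $\{1,\dots,k-1\}$; by construction of $p$, the corresponding $p$-matching is precisely its shift on $\{2,\dots,k\}$. If $S=(i,j)$, so that $i\mapsto 0$ and $j\mapsto k$, then the combinatorial lemma together with the $q$-arc $\{0,k\}$ shows that $\{0,\dots,k\}$ is connected in $\Gamma_{\langle q,p\rangle}$; in particular $i\sim i+1$. If instead $S=(j,i)$, so that $j\mapsto 0$ and $i\mapsto k$, then the same argument yields $j\sim j+1$, and combining with the $q$-edge $\{i,j\}$ and the $p$-edge $\{i+1,j+1\}$ gives $i\sim j\sim j+1\sim i+1$. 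The main obstacle is the combinatorial lemma: the induction is clean, but it relies crucially on the precise match-up between removing the arc $\{1,m\}$ of $M$ and the arc $\{2,m+1\}$ appearing in $M'$.
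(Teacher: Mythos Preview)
Your proof is correct. Both you and the paper reduce (after possibly swapping $i$ and $j$) to the situation where the open interval on one side of the $q$-arc $\{i,j\}$ is defect-free and carries a perfect non-crossing $q$-matching whose $p$-counterpart is its shift by one. From there the two arguments diverge in style. The paper walks the alternating $q$/$p$-path directly: starting from $j$, it repeatedly observes that the current vertex is matched (by $p$ or $q$ as appropriate) to another vertex strictly inside the interval, so the walk cannot escape and must terminate at $i+1$. You instead isolate the combinatorial heart as a standalone lemma---that a non-crossing perfect matching together with its shift is connected---and prove it by structural induction on the outermost arc $\{1,m\}$, splitting into the two nested sub-matchings and stitching them back with the shifted arc $\{2,m+1\}$.

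Your route is a bit longer but cleaner: the induction is self-contained and the termination is never in doubt, whereas the paper's path-tracing argument leaves the reader to check that the walk really does stay inside the interval and really does exhaust it at $i+1$ rather than cycling. On the other hand, the paper's argument is more direct and avoids setting up a separate lemma. Your handling of the two orientations $S=(i,j)$ versus $S=(j,i)$, and the final chain $i\sim j\sim j+1\sim i+1$ in the second case, is exactly right.
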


\begin{proof} We will show that if $i$ does not have a defect in $q$, then $i$ is connected to $i+1$ in $\Gamma_{\langle q, p \rangle}$.

If $i$ does not have a defect in $q$, then $i$ is connected to some $j$ via an edge in $q$. Since $q$ is annular (Example \ref{ex:Jones}), all of its defects lie in either the cyclic interval $(i,j)$ or in the cyclic interval $(j,i)$. By definition, $p$ has an edge between $i+1$ and $j+1$, so $i$ and $i+1$ are connected in $\Gamma_{\langle q, p \rangle}$ if and only if $j$ and $j+1$ are. We may therefore assume without loss of generality (i.e. perhaps interchanging $i$ and $j$) that $q$ has no defects in the cyclic interval $(i,j)$.

By definition, $p$ has no defects in the cyclic interval $(i+1,j+1)$, so in particular $j$ cannot be a defect in $p$. Since $p$ is annular, $(i+1,j+1)$ is a union of parts of $p$, so $j$ must lie in the same part of $p$ as some $k \in (i+1,j)$ (to which it is connected by an edge in $p$). A picture of this portion of $\Gamma_{\langle q, p\rangle}$, with $q$ on the left and $p$ on the right, is as follows:
\begin{center}
\quad
\begin{tikzpicture}[x=1.5cm,y=-.5cm,baseline=-1.05cm]

\def\wid{2}

\node[v] (b1) at (1* \wid,0) {};
\node[v] (b2) at (1* \wid,1) {};
\node[v] (b4) at (1* \wid,4) {};
\node[v] (b6) at (1* \wid,7) {};
\node[v] (b7) at (1* \wid,8) {};

\draw (0,0) node{$j+1$};
\draw (0,1) node{$j$};
\draw (0,4) node{$k$};
\draw (0,7) node{$i+1$};
\draw (0,8) node{$i$};

\draw[e, dotted] (0.3* \wid,0) to (b1);
\draw[e, dotted] (0.3* \wid,1) to (b2);
\draw[e, dotted] (0.3* \wid,4) to (b4);
\draw[e, dotted] (0.3* \wid,7) to (b6);
\draw[e, dotted] (0.3* \wid,8) to (b7);

\draw[e] (b1) to[out=0, in=0] (b6);
\draw[e] (b2) to[out=0, in=0] (b4);
\draw[e] (b2) to[out=180, in=180] (b7);

\path (b2) -- (b4) node [midway, sloped] {$\dots$};
\path (b4) -- (b6) node [midway, sloped] {$\dots$};

\end{tikzpicture}
\quad
\end{center}

Continuing, $k$ is connected via an edge in $q$ to some vertex $\ell$, which must be contained in $(i,j)$ and not have a defect, so is connected in turn to some vertex $m$ via an edge in $p$, and so on. This sequence can only terminate by reaching vertex $i+1$ via some edge in $q$.

The conclusion is that $j$ and $i+1$, hence $i$ and $i+1$, lie in the same component of $\Gamma_{\langle q, p \rangle}$, as required. \end{proof}

\begin{corollary} \label{cor:Jones} Let $q \in M(t)$ be an annular link state in $\Jones_n(\delta)$, for $t \geq 0$. Let $p \in M(t)$ be the link state obtained by `rotating $q$ by one place', so that (mod $n$) if $i$ and $j$ are connected in $q$ then $i+1$ and $j+1$ are connected in $p$.

Then, if $t > 0$, $\Gamma_{\langle q, p \rangle}$ consists of $t$ contractible path components, each containing a single defect from $q$ and a single defect from $p$. If $t=0$ then $\Gamma_{\langle q, p \rangle} \simeq S^1$. \end{corollary}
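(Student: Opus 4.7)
The plan is to combine Lemma \ref{lem:Jones} with an Euler characteristic computation on $\Gamma_{\langle q, p \rangle}$, regarded as a finite (multi)graph. Since $q$ and $p$ each have $(n-t)/2$ pairs, $\Gamma_{\langle q, p \rangle}$ has $n$ vertices and $n - t$ edges (with $q$-edges and $p$-edges counted separately even when they coincide as unordered pairs). Hence $\chi(\Gamma_{\langle q, p \rangle}) = n - (n-t) = t$. Writing $c$ for the number of path components and $\beta_1$ for the rank of $H_1$, this reads $c - \beta_1 = t$, so $c \geq t$ with equality if and only if every component is contractible.

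Suppose $t > 0$, and let $d_1 < d_2 < \dots < d_t$ denote the defects of $q$ in cyclic order (indices on $d$ taken mod $t$). By Lemma \ref{lem:Jones}, for each non-defect vertex $i$, the vertices $i$ and $i+1$ lie in the same path component. Iterating, for each $j$ the entire cyclic interval $K_j := \{d_j + 1, d_j + 2, \dots, d_{j+1}\}$ lies in a single path component. These $t$ intervals partition $\underline{n}$, so $c \leq t$. Together with the lower bound $c \geq t$ from the Euler characteristic, this forces $c = t$ and $\beta_1 = 0$, so each component is a tree, hence contractible. Moreover, $K_j$ visibly contains the $q$-defect $d_{j+1}$ (its rightmost element) and the $p$-defect $d_j + 1$ (its leftmost element, since the defects of $p$ are the cyclic shifts by one of those of $q$). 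Since there are $t$ defects of each type globally and $t$ components, pigeonhole gives that each $K_j$ contains exactly one defect of each type.

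For $t = 0$, Lemma \ref{lem:Jones} applies to every $i$, so $i$ and $i+1$ always lie in the same component. Hence $\Gamma_{\langle q, p \rangle}$ is connected, i.e.\ $c = 1$, and $\chi = 0$ then gives $\beta_1 = 1$. A connected graph with first Betti number one is homotopy equivalent to $S^1$, giving the claim.

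The conceptual work is all done by Lemma \ref{lem:Jones}; the main point requiring care is the bookkeeping for the Euler characteristic. Specifically, one must treat $\Gamma_{\langle q, p \rangle}$ as a multigraph, so that the edge count is exactly $n - t$; the $t > 0$ argument then incidentally rules out any multi-edge, because any such would contribute to $\beta_1$ and force $c > t$, contradicting the interval bound.
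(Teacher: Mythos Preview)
Your argument is correct and takes a genuinely different route from the paper's. The paper invokes both Lemma \ref{lem:Jones} and its dual (``by symmetry, either $i$ has a defect in $p$ or $i$ is connected to $i-1$'') to pin down the components as cyclic intervals bounded on one side by a $q$-defect and on the other by a $p$-defect, and then observes that every vertex has valence at most $2$ to conclude each component is a path (hence contractible) for $t>0$, or a cycle for $t=0$. You instead use Lemma \ref{lem:Jones} only in one direction to obtain the upper bound $c \le t$, and let the Euler-characteristic identity $c - \beta_1 = t$ do the rest: it supplies the matching lower bound, forces $\beta_1 = 0$, and hence delivers contractibility without any valence argument or appeal to the dual statement. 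Your approach is slightly slicker in that it sidesteps the ``by symmetry'' step (which, as stated in the paper, technically requires rerunning the lemma for rotation by $-1$), and your remark about treating $\Gamma_{\langle q,p\rangle}$ as a multigraph so that the edge count is exactly $n-t$ is a genuinely necessary point (e.g.\ $n=2$, $t=0$). The paper's approach, on the other hand, gives a more explicit description of what the components actually look like as subgraphs, which is not needed downstream but is pleasant.
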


\begin{proof} Let $i \in \underline{n}$ be any vertex. By Lemma \ref{lem:Jones}, either $i$ has a defect in $q$, or $i$ is connected to $i+1$ in $\Gamma_{\langle q, p \rangle}$. By symmetry, we also have that either $i$ has a defect in $p$ or $i$ is connected to $i-1$ in $\Gamma_{\langle q, p \rangle}$. It follows that if $t \geq 1$ then the (preimages in $\underline{n}$ of) connected components of $\Gamma_{\langle q, p \rangle}$ are the cyclic intervals $[i,j]$, where $j$ is a defect in $q$, $i$ is a defect in $p$ and there are no defects in $(i,j)$. Since all vertices have valence 2, this establishes the result for $t \geq 1$.

If $p$ has no defects ($t=0$) then all vertices lie in the same connected component of $\Gamma_{\langle q, p \rangle}$. Again, since all vertices have valence 2, the result follows. \end{proof}

\begin{theorem} \label{thm:JR1} Let $R$ be a commutative ring, and let $\delta \in R$. The Jones annular algebra $\Jones_n(\delta)$ satisfies Hypothesis $(\dag)$ for $1 \leq t \leq n-1$, and if $\delta$ is invertible, then additionally Hypothesis $(\dag)$ holds for $t=0.$ \end{theorem}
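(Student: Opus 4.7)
The plan is to directly apply Corollary \ref{cor:Jones} to verify Hypothesis $(\dag)$. Given $q \in M(t)$, the natural candidate for $p$ is already handed to us by that corollary: take $p$ to be the rotation of $q$ by one place, which is again an annular link state with $t$ defects (annularity being preserved by cyclic rotation, c.f. Example \ref{ex:Jones}).

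With this choice, Corollary \ref{cor:Jones} completely describes $\Gamma_{\langle q,p\rangle}$. In the case $t \geq 1$, the pair graph consists of exactly $t$ contractible components, each containing precisely one defect from $q$ and one from $p$; so all $t$ components lie in $S_{\langle q,p\rangle}$, giving $\norm{S_{\langle q,p\rangle}} = t$, while there are no components missed by either copy of $\underline{t}$, so $i = 0$ and $\delta^i = 1$ is invertible. Hypothesis $(\dag)$ therefore holds for all $1 \leq t \leq n-1$ without any assumption on $\delta$.

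For $t = 0$, Corollary \ref{cor:Jones} gives that $\Gamma_{\langle q,p\rangle} \simeq S^1$, a single component. The pair-set $S_{\langle q,p\rangle}$ is empty (as $t = 0$), so $\norm{S_{\langle q,p\rangle}} = 0 = t$ trivially; the single component is not hit by either copy of $\underline{0}$, so $i = 1$, and the hypothesis requires $\delta^1 = \delta$ to be invertible. This is exactly the extra assumption in the statement of the theorem.

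There isn't really a main obstacle here: all of the combinatorial content has been packaged into Lemma \ref{lem:Jones} and Corollary \ref{cor:Jones}, so the proof is just the observation that the rotation-by-one link state witnesses the hypothesis in both cases. The only very small care needed is in noting that the rotation of an annular link state remains annular (so that $p \in M(t)$) and in separating the two cases $t \geq 1$ and $t = 0$ according to whether $i = 0$ or $i = 1$.
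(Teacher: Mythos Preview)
Your proof is correct and follows essentially the same approach as the paper: choose $p$ to be the rotation of $q$ by one place and invoke Corollary \ref{cor:Jones} to read off $\norm{S_{\langle q,p\rangle}}$ and $i$ in the two cases. Your version is slightly more explicit (noting that rotation preserves annularity, and computing $i=1$ in the $t=0$ case), but the argument is the same.
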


\begin{proof} For the first claim, let $q \in M(t)$ for $t \geq 1$. Let $p \in M(t)$ be obtained by rotation, so that if $i$ and $j$ are connected in $q$ then $i+1$ and $j+1$ are connected in $p$.

By Corollary \ref{cor:Jones}, $\norm{S_{\langle q,p\rangle}} = t$, and each path component of $\Gamma_{\langle q, p \rangle}$ is contractible, so $i=0$ by Lemma \ref{lem:formInterpretation}. This establishes Hypothesis $(\dag)$.

When $\delta$ is invertible and $t=0$, we still have $\norm{S_{\langle q,p\rangle}} = t$, so Hypothesis $(\dag)$ again holds. This completes the proof.
\end{proof}

\begin{corollary} \label{cor:JR2} Let $R$ be a commutative ring, let $\delta \in R$, and consider the Jones annular algebra $\Jones_n(\delta)$ (Example \ref{ex:Jones}). Let $I_0$ denote the ideal spanned by partitions $\rho$ such that no part of $\rho$ contains both primed and unprimed elements.

We have
$$\Tor_*^{\faktor{\Jones_n(\delta)}{I_{0}}}(\t,\t) \cong \Tor_*^{R C_n}(\t,\t),$$
and if $\varepsilon$ is invertible or $n$ is odd then additionally
$$\Tor_*^{\Jones_n(\delta)}(\t,\t) \cong \Tor_*^{\faktor{\Jones_n(\delta)}{I_{0}}}(\t,\t).$$
\end{corollary}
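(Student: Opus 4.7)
The plan is to deduce both isomorphisms by applying Theorem \ref{thm:subalgebrasOfBrauer} to $\Jones_n(\delta)$, using Theorem \ref{thm:JR1} to supply Hypothesis $(\dag)$ at the required values of $t$. The first point is to identify the ideal $I_0$ from the corollary's statement with the ideal $I_{\leq 0}$ in the notation of Definition \ref{def:I}: both are spanned by those basis elements $C_{p,q}^{\sigma}$ associated to $t=0$, i.e.\ partitions with no part containing both a primed and an unprimed vertex (and this intersection is empty when $n$ is odd, in which case $I_0 = 0$).

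For the first isomorphism, I would apply Theorem \ref{thm:subalgebrasOfBrauer} with $a=0$ and $b=n-1$. Theorem \ref{thm:JR1} gives Hypothesis $(\dag)$ at every $t$ with $1 \leq t \leq n-1$ such that $t \in \Lambda$, and it holds vacuously when $t \not\in \Lambda$ because $M(t)=\emptyset$ (this handles the parity mismatch). Combined with Lemma \ref{lem:topThingJ}, which identifies $\faktor{\Jones_n(\delta)}{I_{\leq n-1}}$ with $RC_n$, the theorem yields
$$\Tor_*^{\faktor{\Jones_n(\delta)}{I_0}}(\t,\t) \cong \Tor_*^{\faktor{\Jones_n(\delta)}{I_{\leq n-1}}}(\t,\t) \cong \Tor_*^{RC_n}(\t,\t).$$

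For the second isomorphism, if $n$ is odd then $0 \notin \Lambda$, so $I_0 = 0$ and the claim is tautological. In the remaining case, when $\delta$ is invertible, Theorem \ref{thm:JR1} additionally provides Hypothesis $(\dag)$ at $t=0$, so we may apply Theorem \ref{thm:subalgebrasOfBrauer} again with $b=0$ and any $a<0$ (for which $I_{\leq a}=0$), giving
$$\Tor_*^{\Jones_n(\delta)}(\t,\t) \cong \Tor_*^{\faktor{\Jones_n(\delta)}{I_0}}(\t,\t),$$
as required. There is no genuine obstacle here: all of the real content has been placed in Theorem \ref{thm:JR1}, and what remains is to organise the two applications of Theorem \ref{thm:subalgebrasOfBrauer} and to keep the bookkeeping between $I_0$, $I_{\leq 0}$, and the two parities of $n$ consistent.
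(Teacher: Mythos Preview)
Your proposal is correct and follows essentially the same approach as the paper: both deduce the corollary from Theorem \ref{thm:subalgebrasOfBrauer}, Theorem \ref{thm:JR1}, and Lemma \ref{lem:topThingJ}, with the odd-$n$ case handled separately by the parity observation that $I_0=0$. Your version is simply more explicit about the choices of $a$ and $b$ in the two applications of Theorem \ref{thm:subalgebrasOfBrauer} and about why Hypothesis $(\dag)$ is vacuous when $t\notin\Lambda$.
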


\begin{proof} All parts of the claim apart from the case $n$ odd follow from Theorem \ref{thm:subalgebrasOfBrauer}, Lemma \ref{thm:JR1}, and Theorem \ref{lem:topThingJ}.

If $n$ is odd then a partition of $\underline{n} \cup \underline{n}'$ where all parts have cardinality 2 must have at least one part with both a primed and an unprimed element. It follows that $I_0 = 0$, so the result follows. \end{proof}

\section{Link state orderings} \label{section:LSO}

\begin{definition} \label{def:LSOrdering} Let $A$ be a {\wossit} algebra over a ring $R$, with {\wossit} datum $(\Lambda, G, M, C, *)$. A \emph{link state ordering} on the {\wossit} datum is a partial order $<$ on $$M_{\Lambda} := \coprod_{\lambda \in \Lambda} M(\lambda),$$ such that:
\begin{enumerate}
    \item \label{O1} whenever $p,q \in M_{\Lambda}$ have a common lower bound, then they have a greatest common lower bound,
    \item \label{O2} the map $$\pi: \coprod_{\lambda \in \Lambda} M(\lambda) \to \Lambda$$ sending $p \in M(\lambda)$ to $\lambda$ is strictly order-preserving, and
    \item \label{O3} the ordering is compatible with the multiplication in the sense that for all $a \in A$, $\lambda \in \Lambda$, $p,q \in M(\lambda)$ and $\sigma \in G(\lambda)$ we have $$a C_{p,q}^{\sigma} \in \Span_R\{C_{p,q'}^{\sigma} \ \mid \ \mu \leq \lambda, \sigma \in g(\mu), p,q' \in M(\mu), q' \leq q \}$$
\end{enumerate}
By a \emph{link state ordered} {\wossit} algebra, we mean a {\wossit} algebra with a prescribed choice of {\wossit} datum and link state ordering.
\end{definition}

We write $<$ for both the order on $\lambda$ and that on $M_{\lambda}$.

\subsection{Ideals}

In this subsection we fix a {\wossit} algebra over $R$, with {\wossit} datum $(\Lambda, G, M, C, *)$, and a choice of link state ordering.

Recall that for $\lambda \in \Lambda$, and $q \in M(\lambda)$, we define an $R$-submodule $J_q$ of $A$ via 
$$J_q := \Span_R\{C_{p,q}^{\sigma} \ \mid \ \sigma \in g(\lambda), p \in M(\lambda)\}.$$
Using the link state ordering, we now further define $$J_{\leq q} := \Span_R\{C_{p,q'}^{\sigma} \ \mid \ \mu \leq \lambda, \sigma \in g(\mu), p,q' \in M(\mu), q' \leq q \}.$$

Definition \ref{def:LSOrdering}, Condition (\ref{O3}) is then equivalent to:

\begin{lemma} \label{lem} $J_{\leq q}$ is a left ideal of $A$. \qed \end{lemma}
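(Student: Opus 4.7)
The proof is essentially a one-step unwinding: I would check on a spanning set and invoke transitivity.

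The plan is to show that for any $a \in A$ and any basis element $C_{p,q'}^{\sigma}$ in the spanning set of $J_{\leq q}$, the product $a C_{p,q'}^{\sigma}$ lies in $J_{\leq q}$; $R$-linearity then extends this to all of $J_{\leq q}$. So fix such a basis element, meaning $\mu \leq \lambda$, $\sigma \in G(\mu)$, $p, q' \in M(\mu)$ with $q' \leq q$. By Condition (\ref{O3}) of Definition \ref{def:LSOrdering}, applied with $q$ there taken to be our $q'$ and $\lambda$ there taken to be our $\mu$, the product $a C_{p,q'}^{\sigma}$ is an $R$-linear combination of basis elements $C_{p'',q''}^{\sigma'}$ with $\mu' \leq \mu$, $\sigma' \in G(\mu')$, $p'', q'' \in M(\mu')$, and $q'' \leq q'$.

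Now I would invoke transitivity of the partial orders on $\Lambda$ and on $M_\Lambda$: from $\mu' \leq \mu \leq \lambda$ and $q'' \leq q' \leq q$, each such basis element $C_{p'',q''}^{\sigma'}$ is already in the spanning set defining $J_{\leq q}$. Hence $a C_{p,q'}^{\sigma} \in J_{\leq q}$, as required.

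There is no real obstacle here; the only mild points are notational. The statement of Condition (\ref{O3}) reuses the symbols $p$ and $\sigma$ inside the span on the right, so I would want to rename them when invoking it (calling the bound variables $p'', q'', \sigma', \mu'$) to avoid any confusion with the basis element $C_{p,q'}^{\sigma}$ we started with. Beyond that, the proof is a direct unwinding of the definitions, which is why the lemma is tagged with \qed in the statement itself.
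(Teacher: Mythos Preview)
Your argument is correct and matches the paper's approach exactly: the paper simply notes that the lemma is equivalent to Condition~(\ref{O3}) of Definition~\ref{def:LSOrdering} and gives no further proof, and your unwinding---applying Condition~(\ref{O3}) at $q'$ and then using transitivity of the partial order to land in $J_{\leq q}$---is precisely the content of that equivalence. Your care with the bound-variable renaming is appropriate, since the statement of Condition~(\ref{O3}) does reuse symbols.
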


Write $M_{\Lambda} \cup \{-\infty\}$ for the set consisting of $M_{\Lambda}$, together with a single additional element $- \infty$, with the convention that $-\infty < p$ for all $p \in M_{\Lambda}$. By Definition \ref{def:LSOrdering}, Condition (\ref{O1}), we then have:

\begin{lemma} \label{lem:OneMore} $M_{\Lambda} \cup \{-\infty\}$ has all meets. \qed \end{lemma}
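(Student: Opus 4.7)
The plan is a straightforward case analysis based on whether $-\infty$ appears among the two elements whose meet we seek.

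Let $x, y \in M_\Lambda \cup \{-\infty\}$. If either $x$ or $y$ equals $-\infty$, then $-\infty$ is the unique (hence greatest) common lower bound, so $x \wedge y = -\infty$. Otherwise both lie in $M_\Lambda$, and we split on whether they admit a common lower bound inside $M_\Lambda$. If they do, then Axiom (O1) of Definition \ref{def:LSOrdering} immediately yields a greatest common lower bound $z \in M_\Lambda$; this $z$ remains the meet in the enlarged poset $M_\Lambda \cup \{-\infty\}$, since any other common lower bound is either $-\infty$ (which satisfies $-\infty \leq z$) or an element of $M_\Lambda$ (which satisfies $\leq z$ by the defining property of $z$). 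If instead $x$ and $y$ have no common lower bound in $M_\Lambda$, then $-\infty$ is their unique common lower bound in $M_\Lambda \cup \{-\infty\}$, and is therefore trivially their meet.

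There is no real obstacle: the entire content of the lemma is the case split above, combined with the observation that adjoining the bottom element $-\infty$ simultaneously fills in the missing meets of pairs with no common lower bound in $M_\Lambda$ and leaves existing meets undisturbed. If one wants arbitrary (rather than just binary) meets, the same dichotomy applies: a subset $S$ either contains $-\infty$ or has some element bounded below only by $-\infty$, in which case $\bigwedge S = -\infty$; or else $S \subset M_\Lambda$ has a common lower bound and we iterate (O1), noting that the collection of common lower bounds is itself closed under the binary meet provided by (O1), so any maximal element is the greatest.
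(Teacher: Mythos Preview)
Your binary-meet argument is correct and is exactly the content behind the paper's one-line proof (which simply cites Condition~(\ref{O1}) of Definition~\ref{def:LSOrdering}); you have just spelled out the case split that the paper leaves implicit.

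One small caveat: your closing remark about \emph{arbitrary} meets does not quite work as written. Showing that the set of common lower bounds of $S$ is closed under binary meet does not help you find a \emph{greatest} lower bound---you would need closure under something like joins, or a finiteness/chain condition, to conclude that a maximal lower bound is greatest. Fortunately this is irrelevant: the lemma is only ever used (in Lemma~\ref{lem:IntersMeet}) for binary meets $p \wedge q$, so ``all meets'' here should be read as ``all binary meets,'' and your main argument already establishes that.
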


By using the convention that $J_{\leq (- \infty)} = J_{- \infty} = 0$, we can avoid a case statement in the next lemma:

\begin{lemma} \label{lem:IntersMeet} For $p,q \in M_{\Lambda}$, we have $J_{\leq p} \cap J_{\leq q} = J_{\leq (p \wedge q)}$ \qed
\end{lemma}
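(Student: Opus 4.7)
The plan is to identify both sides as the $R$-span of the same subset of the canonical basis of $A$, so that the identity reduces to a purely order-theoretic statement about $M_\Lambda \cup \{-\infty\}$.

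First I would simplify the definition of $J_{\leq q}$. By Condition (\ref{O2}) of Definition \ref{def:LSOrdering}, the map $\pi$ is strictly order-preserving, so the requirement $\mu \leq \lambda$ in the definition is automatic from $q' \leq q$. Hence
$$J_{\leq q} = \Span_R\{C_{p',q'}^{\sigma} \mid q' \in M_\Lambda,\ q' \leq q,\ p' \in M(\pi(q')),\ \sigma \in G(\pi(q'))\}.$$
By Axiom \ref{item:W1}, the elements $C_{p',q'}^{\sigma}$ form an $R$-basis of $A$, so both $J_{\leq p}$ and $J_{\leq q}$ are free $R$-submodules generated by subsets of this basis.

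It is a standard fact that if two submodules of a free module are each spanned by subsets of a fixed basis, then their intersection is spanned by the intersection of those basis subsets. Applying this gives
$$J_{\leq p} \cap J_{\leq q} = \Span_R\{C_{p',q'}^{\sigma} \mid q' \leq p \textrm{ and } q' \leq q\},$$
so the lemma is reduced to showing that the set of $q' \in M_\Lambda$ satisfying $q' \leq p$ and $q' \leq q$ coincides with the set of $q' \in M_\Lambda$ satisfying $q' \leq (p \wedge q)$ (with the convention that the latter set is empty when $p \wedge q = -\infty$).

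I would then split into two cases. If $p$ and $q$ have a common lower bound in $M_\Lambda$, Condition (\ref{O1}) produces a greatest common lower bound $p \wedge q$, and $q' \leq p$ together with $q' \leq q$ is equivalent to $q' \leq (p \wedge q)$ by definition of greatest lower bound; hence the intersection equals $J_{\leq (p \wedge q)}$. If $p$ and $q$ admit no common lower bound in $M_\Lambda$, then the indexing set is empty, so the intersection is $0$, matching the convention $J_{\leq (p \wedge q)} = J_{\leq (-\infty)} = 0$. There is no real obstacle here beyond keeping the $-\infty$ convention straight; the bulk of the content is the reduction to basis intersections, which is immediate from Axiom \ref{item:W1}.
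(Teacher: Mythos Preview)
Your argument is correct. The paper treats this lemma as immediate and gives no proof (the \qed{} is attached directly to the statement), so your write-up simply spells out the intended reasoning: both sides are spans of subsets of the canonical basis from Axiom~\ref{item:W1}, and the equality of these subsets is exactly the defining property of the meet $p \wedge q$ in $M_\Lambda \cup \{-\infty\}$, which exists by Lemma~\ref{lem:OneMore}.
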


\subsection{Idempotent generation}

In this section we fix a {\wossit} algebra over $R$, with {\wossit} datum $(\Lambda, G, M, C, *)$, and a choice of link state ordering.

\begin{definition} \label{def:generating} A link state ordering will be said to be \emph{(left) generating} if $J_{q}$ generates $J_{\leq q}$ as a left ideal.
\end{definition}

Suppose that $A$ is diagram-like. Then, the product of two basis elements $C_{p_1,q_1}^{\sigma_1} C_{p_2,q_2}^{\sigma_2}$ associated to the same $\lambda \in \Lambda$ is a multiple of some element of the basis of $I_{\leq \lambda}$, associated to some $\lambda'$, and knowledge of $q_1$ and $p_2$ is sufficient to determine whether $\lambda = \lambda'$. Let $S_q \subset M(\lambda)$ be the set of those $p$ for which the product $C_{p_1,q}^{\sigma_1} C_{p,q_2}^{\sigma_2}$ lies in the span of the basis elements associated to $\lambda$ (this question being independent of $p_1, \sigma_1, q_2,$ and $\sigma_2$), and let $$\rho_q : J_q \to J_q$$ be the projection onto those basis elements $C_{p',q}^{\sigma'}$ for which $p' \in S_q$.

\begin{lemma} \label{lem:rhoPreserves} For any $a \in J_q$, right multiplication by $\rho_q(a)$ preserves $J_q$.
\end{lemma}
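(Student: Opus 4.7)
My plan is to reduce the claim, by $R$-bilinearity, to the case where $x = C_{p_1,q}^{\sigma_1}$ is a single basis element of $J_q$, and then directly apply the diagram-like multiplication rule together with dependency condition (\ref{item:dl5}).

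First I would expand $\rho_q(a)$ in the canonical basis of $J_q$, obtaining
$$\rho_q(a) = \sum_{\substack{p' \in S_q \\ \sigma' \in G(\lambda)}} \alpha_{p',\sigma'}\, C_{p',q}^{\sigma'},$$
where all the indices $p'$ are constrained to lie in $S_q$ by definition of $\rho_q$. Multiplying on the left by a basis element $x = C_{p_1,q}^{\sigma_1} \in J_q$ and using bilinearity then reduces the problem to showing that each individual product $C_{p_1,q}^{\sigma_1}\, C_{p',q}^{\sigma'}$, for $p' \in S_q$, lies in $J_q$.

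Next I would unpack the definition of $S_q$: since $p' \in S_q$, this product lies in the span of basis elements associated to $\lambda$. Combined with the fact that $A$ is diagram-like, the product is a single term $\kappa C_{p'',q''}^{\sigma''}$ for some scalar $\kappa$ and indices $p'',q'',\sigma''$, and we have either $\kappa = 0$ (in which case there is nothing to show) or the associated element $\lambda''$ of the product equals $\lambda$. In the latter case dependency condition (\ref{item:dl5}) applies with $\lambda_1 = \lambda_2 = \lambda$ and $q_2 = q$, forcing $q'' = q$. Hence $\kappa C_{p'',q}^{\sigma''} \in J_q$, as required.

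I do not anticipate any real obstacle: the only substantive ingredient is dependency condition (\ref{item:dl5}), which was built into the diagram-like axioms precisely to ensure that when a product stays at level $\lambda$, the right index is inherited from the right factor. The set $S_q$ isolates exactly the products for which one is allowed to invoke that condition, so the argument is essentially an unpacking of definitions.
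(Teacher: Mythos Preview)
Your proof is correct and follows essentially the same route as the paper's: reduce by $R$-linearity to a single product of basis elements with left index in $S_q$, then invoke the diagram-like axiom and dependency condition (\ref{item:dl5}) to conclude $q'' = q$. The paper reduces by linearity in $a$ first (taking $a$ to be a basis element and computing $\rho_q(a)$ explicitly) rather than expanding $\rho_q(a)$ as you do, but the substance is identical.
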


\begin{proof} It suffices to verify the lemma in the case that $a = C_{p,q}^{\sigma}$ is a basis element. Then $$\rho_q(C_{p,q}^{\sigma})=\begin{cases} C_{p,q}^{\sigma'} & p \in S_q, \textrm{ and} \\ 0 & \textrm{ otherwise.} \end{cases}$$ That is, we must establish that if $p \in S_q$ then right multiplication by $C_{p,q}^{\sigma}$ preserves $J_q$. The definition of a diagram-like algebra gives that: $$C_{p',q}^{\sigma'} C_{p,q}^{\sigma} = \kappa C_{p'',q''}^{\sigma''},$$ and then, since $p \in S_q$, we have either that $\kappa = 0$, or that $C_{p'',q''}^{\sigma''}$ is associated to $\lambda$. In the first case we are done, and in the second case, the definition of a diagram-like algebra (Definition \ref{def:dgrmLike}, Condition \ref{item:dl5}) gives that $q''=q$, as required. \end{proof}

\begin{lemma} \label{lem:IdempotentLift} Suppose that $A$ is diagram-like. Let $\lambda \in \Lambda$, $q \in M(\lambda)$. If the right action of $e \in J_q$ (equivalently, the right action of $\pi(e) \in \pi(J_q)$) fixes the submodule $\pi(J_q) \subset \faktor{A}{I_{< \lambda}}$, then the right action of $\rho_q(e)$ fixes $J_q$. \end{lemma}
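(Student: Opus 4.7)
The plan is to use the decomposition $e = \rho_q(e) + (e - \rho_q(e))$ and show that each summand behaves independently on $J_q$: the first preserves $J_q$ by Lemma \ref{lem:rhoPreserves}, while the second sends $J_q$ into $I_{<\lambda}$. Combining this with the hypothesis $ye \equiv y \pmod{I_{<\lambda}}$ and with the disjointness $J_q \cap I_{<\lambda} = 0$ coming from Axiom \ref{item:W1} should then force $y\rho_q(e) = y$ exactly in $A$, not merely modulo $I_{<\lambda}$.

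First I would unfold what $e - \rho_q(e)$ looks like: by definition of $\rho_q$, it is an $R$-linear combination of basis elements $C_{p',q}^{\sigma'}$ with $p' \notin S_q$. For any basis element $y = C_{p_1,q}^{\sigma_1}$ of $J_q$, the diagram-like axiom expresses $y \cdot C_{p',q}^{\sigma'}$ as a scalar multiple of a single basis element; the definition of $S_q$, applied with $q_1 = q$ in the left factor and $p_2 = p' \notin S_q$ in the right factor, tells us that this basis element is associated to some $\mu < \lambda$, hence lies in $I_{<\lambda}$. By $R$-linearity one extends to $y(e - \rho_q(e)) \in I_{<\lambda}$ for every $y \in J_q$. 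Lemma \ref{lem:rhoPreserves} then gives $y\rho_q(e) \in J_q$, and the hypothesis gives $ye - y \in I_{<\lambda}$, so
$$y\rho_q(e) - y \;=\; (ye - y) \;-\; y(e - \rho_q(e)) \;\in\; I_{<\lambda}.$$
On the other hand, $y\rho_q(e) - y$ lies in $J_q$. Axiom \ref{item:W1} guarantees that $J_q$ and $I_{<\lambda}$ are spanned by disjoint subsets of an $R$-basis of $A$ (those associated to $\lambda$ versus those associated to strictly smaller elements), so their intersection is zero and we conclude $y\rho_q(e) = y$.

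There is essentially no real obstacle: the conceptual content has already been packaged into the definition of $S_q$ and into Lemma \ref{lem:rhoPreserves}. The only point requiring care is to line up the roles correctly in the definition of $S_q$: when multiplying $y = C_{p_1,q}^{\sigma_1}$ on the right by $C_{p',q}^{\sigma'}$, it is $q_1 = q$ of the left factor paired with $p_2 = p'$ of the right factor that governs whether the level drops, so non-membership $p' \notin S_q$ is exactly what guarantees the product falls into $I_{<\lambda}$. This is the reason $\rho_q$ is defined as projection onto $p' \in S_q$ rather than any other condition.
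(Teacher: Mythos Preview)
Your proof is correct and follows essentially the same approach as the paper's. The paper organizes the argument as a commutative square with $\pi:J_q\to\pi(J_q)$ as vertical isomorphisms, noting that the kernel of $\rho_q$ acts trivially on $\pi(J_q)$ so that $\cdot\,\rho_q(e)$ and $\cdot\,e$ agree after projection; you unfold this into the explicit computation $y\rho_q(e)-y\in J_q\cap I_{<\lambda}=0$, which is the same content written out elementwise.
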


\begin{proof} The kernel of $\rho_q$ acts trivially on $\pi(J_q)$ (from the right), so the right multiplication maps $ \cdot \rho_q(e)$ and $ \cdot e$ coincide as maps $\pi(J_p) \to \faktor{A}{I_{< \lambda}}$ on the quotient. By Lemma \ref{lem:rhoPreserves}, the action of $\rho_q(e)$ preserves $J_q$, so we get a commutative diagram
\begin{center}
\begin{tikzcd}
J_q \ar[d,"\pi"] \ar[r,"\cdot \rho_q(e)"] & J_q \ar[d,"\pi"] \\
\pi(J_q) \ar[r,"\cdot e"] & \pi(J_q).
\end{tikzcd}
\end{center}
The vertical map $\pi$ is an isomorphism of $R$-modules, so since the bottom map is assumed to be the identity, the top map is too. This completes the proof.
\end{proof}

\begin{proposition} \label{prop:Idempotents} Suppose that $A$ is diagram-like. Suppose that the link state ordering is left generating (Definition \ref{def:generating}). Let $\lambda \in \Lambda$, $q \in M(\lambda)$. If there exists $v \in W(\lambda)$ such that for some $\sigma \in G(\lambda)$ we have $$\langle C_{q}, v \rangle_{\tau} = \begin{cases} 1 & \textrm{ if } \tau=\sigma, \textrm{ and} \\ 0 & \textrm{ otherwise,} \end{cases}$$
then there exists an idempotent $e_{q}$ in $J_{q}$ which generates $J_{\leq q}$ as a left ideal.
\end{proposition}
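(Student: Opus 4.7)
The plan is to obtain $e_q$ by first producing an idempotent generator of $\pi(J_q)$ in the quotient $\faktor{A}{I_{<\lambda}}$, then lifting it back to $J_q$ via the projection $\rho_q$, then using the generating hypothesis on the link state ordering to pass from $J_q$ to $J_{\leq q}$.

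To set this up, I would take $X = \{\mu \in \Lambda \mid \mu < \lambda \}$, which is downward closed and has $\lambda$ as a minimal element of its complement, so $I_X = I_{<\lambda}$ and the results of Section~\ref{section:BasicProperties} apply. From the hypothesis and Lemma~\ref{lem:ezCase} (with the given $\sigma$), the indicator function of $1 \in G(\lambda)$ is an $R$-linear combination of the functions $\tau \mapsto \langle C_q, C_p \rangle_{\tau \sigma}$ for $p \in M(\lambda)$; reindexing $\sigma \mapsto \sigma^{-1}$ shows that the hypothesis of Proposition~\ref{prop:formBegetIdempt} is satisfied, so there exists an idempotent $\bar{e}_q \in \pi(J_q) \subset \faktor{A}{I_{<\lambda}}$ which generates $\pi(J_q)$ as a left ideal. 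By Lemma~\ref{lem:idempotentGenCharacterisation}, this means $y \bar{e}_q = y$ for every $y \in \pi(J_q)$.

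Since $J_q$ is spanned by basis vectors $C^{\sigma'}_{p',q}$ corresponding to $\lambda$, and $I_{<\lambda}$ is spanned by basis vectors corresponding to strictly smaller elements of $\Lambda$, the projection $\pi : J_q \to \pi(J_q)$ is an $R$-module isomorphism. Let $e \in J_q$ be the unique preimage of $\bar{e}_q$. Then the right action of $\pi(e)$ fixes $\pi(J_q)$ pointwise, so Lemma~\ref{lem:IdempotentLift} (invoking that $A$ is diagram-like) tells us that right multiplication by $\rho_q(e)$ fixes $J_q$ pointwise. I will define $e_q := \rho_q(e) \in J_q$. Then $e_q e_q = e_q$, so $e_q$ is idempotent, and $y = y \cdot e_q \in A e_q$ for every $y \in J_q$, so $J_q \subset A e_q$.

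Finally, the left-generating hypothesis (Definition~\ref{def:generating}) says $J_{\leq q}$ is the left ideal generated by $J_q$, and Lemma~\ref{lem} tells us $J_{\leq q}$ is itself a left ideal containing $e_q$. Combining these gives
\[
J_{\leq q} \;=\; A \cdot J_q \;\subset\; A \cdot e_q \;\subset\; J_{\leq q},
\]
so $e_q$ generates $J_{\leq q}$ as a left ideal, as required. None of the individual steps is hard; the main bookkeeping hurdle is simply that Proposition~\ref{prop:formBegetIdempt} lives in the quotient algebra while the claim concerns $J_{\leq q}$ in $A$, and Lemma~\ref{lem:IdempotentLift} together with the diagram-like structure is precisely the bridge that lets one pass between them without losing idempotency.
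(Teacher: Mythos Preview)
Your proposal is correct and follows essentially the same route as the paper's proof: obtain an idempotent generator of $\pi(J_q)$ via Lemma~\ref{lem:ezCase} and Proposition~\ref{prop:formBegetIdempt}, lift it to $J_q$ using the $R$-module isomorphism $\pi$ and Lemma~\ref{lem:IdempotentLift}, then use the left-generating hypothesis to extend generation from $J_q$ to $J_{\leq q}$. The only cosmetic difference is that the paper phrases the final step as ``any $y \in J_{\leq q}$ is of the form $ax$ with $x \in J_q$, so $y = ax = ax e_q$'' rather than your chain of inclusions, and your remark about reindexing $\sigma \mapsto \sigma^{-1}$ is unnecessary since Proposition~\ref{prop:formBegetIdempt} ranges over all $\sigma \in G(\lambda)$ anyway.
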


\begin{proof} By Proposition \ref{lem:JIdeal}, the image $\pi(J_q)$ in the quotient algebra $\faktor{A}{I_{<\lambda}}$ is a left ideal. By Lemma \ref{lem:ezCase} and Proposition \ref{prop:formBegetIdempt}, the condition of this theorem guarantees that $\pi(J_q)$ is generated by some idempotent $\varepsilon_q$. This means that every element of $\pi(J_q)$ is of the form $x \varepsilon_q$ for $x \in A$, so since $\varepsilon_q$ is idempotent, right multiplication by $\varepsilon_q$ fixes $\pi(J_q)$.

Let $\widetilde{\varepsilon_q} \in J_q$ be the unique element with $\pi(\widetilde{\varepsilon_q}) = \varepsilon_q$, and let $e_q = \rho_q(\widetilde{\varepsilon_q})$.  By Lemma \ref{lem:IdempotentLift}, right multiplication by $e_q$ fixes $J_q$.

By the assumption that the link state ordering is left generating, we have that any $y \in J_{\leq q}$ is of the form $ax$ for $x \in J_q$. Since right multiplication by $e_q$ fixes $J_q$ we have $x = x e_q$, so $y = ax = ax e_q$. That is, $e_q$ generates $J_{\leq q}$ as a left ideal of $A$.

Lastly, $e_q$ is itself in $J_q$, so is fixed by right multiplication by $e_q$: $e_q \cdot e_q = e_q$. This establishes that $e_q$ is idempotent, and completes the proof.
\end{proof}

\section{Application: Homological stability for Temperley-Lieb algebras} \label{section:HS}

In this section, we will convert Sroka's proof of homological stability for the Temperley-Lieb algebras into our language. We wish to use Proposition \ref{prop:Idempotents}, so we must first write down a link state ordering and show that it is left generating.

For the most part, the maneuvers we make with diagrams will be familiar ones: see for example \cite{Kauffman, FGG, FGG-TL, BHComb}.

\begin{definition} \label{def:TLLSO} Let $p$ and $q$ be Temperley-Lieb link states. Say that $p \leq q$ if each connection from $q$ is present in $p$. 

Equivalently, $p \leq q$ if each defect from $p$ is present in $q$, and for each connection in $p$, either this connection is present in $q$, or both of its endpoints are defects in $q$.
\end{definition}

\begin{lemma} \label{lem:TLLSO} The ordering of Definition \ref{def:TLLSO} defines a link state ordering on the Temperley-Lieb algebra $\TL_n(\delta)$. 
\end{lemma}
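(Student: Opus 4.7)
The plan is to verify the three conditions (\ref{O1}), (\ref{O2}), (\ref{O3}) of Definition \ref{def:LSOrdering} for the ordering of Definition \ref{def:TLLSO}. Throughout, I would identify each Temperley-Lieb link state $p$ with its set $E(p)$ of cardinality-two parts (its ``edges''); Definition \ref{def:TLLSO} then reads $p \leq q$ if and only if $E(q) \subseteq E(p)$.

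Condition (\ref{O2}) is immediate. If $p < q$ strictly then $E(p) \supsetneq E(q)$, so $p$ has at least one more edge than $q$, hence two fewer defects, giving $t(p) < t(q)$ as required.

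Condition (\ref{O1}) is also short. If $p$ and $q$ share a common lower bound $r$, then $E(p) \cup E(q) \subseteq E(r)$, and since $r$ is a Temperley-Lieb link state the set $E(r)$ is a planar (non-crossing) matching on $\underline{n}$, so the subset $E(p) \cup E(q)$ is too. This subset therefore defines a Temperley-Lieb link state $p \wedge q$, which is manifestly the greatest common lower bound.

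Condition (\ref{O3}) is the substantive point. By $R$-linearity in $a$, I may assume $a = C_{p_1,q_1}^1$. Since $\TL_n(\delta)$ is diagram-like (Example \ref{ex:TL}), we may write $C_{p_1,q_1}^1 \cdot C_{p,q}^1 = \kappa\, C_{p_{\mathrm{new}},q_{\mathrm{new}}}^1$ for some $t_{\mathrm{new}} \leq t$. When $t_{\mathrm{new}} = t$, Dependency Condition (\ref{item:dl5}) forces $q_{\mathrm{new}} = q$ and there is nothing to prove. When $t_{\mathrm{new}} < t$, I would argue from the standard diagrammatic description: each edge of $q$ lies entirely ``on the right'' of the concatenated diagram and so survives untouched into $q_{\mathrm{new}}$, while any defect of $q$ that fails to be a defect of $q_{\mathrm{new}}$ corresponds to a through-line of $C_{p,q}^1$ whose $p$-endpoint lies in a path component of the pair graph $\Gamma_{\langle q_1, p \rangle}$ (Definition \ref{def:pairGraph}) containing a second defect of $p$, rather than a defect of $q_1$. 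Such a path component pairs up exactly two defects of $p$, hence via the through-lines of $C_{p,q}^1$ pairs up exactly two defects of $q$, producing a new edge of $q_{\mathrm{new}}$ joining two former defects of $q$. Hence $E(q) \subseteq E(q_{\mathrm{new}})$, i.e.\ $q_{\mathrm{new}} \leq q$.

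The main obstacle is (\ref{O3}); the other two conditions are bookkeeping. Making the diagrammatic argument precise amounts to reading off the dependency of $q_{\mathrm{new}}$ on $q$ from Graham-Lehrer's explicit multiplication formula (the one invoked in the proof of Proposition \ref{prop:BrauerDGL}) restricted to planar diagrams, though the picture via the pair graph is clean enough that a direct verification seems preferable.
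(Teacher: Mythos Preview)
Your arguments for Conditions (\ref{O2}) and (\ref{O3}) are correct and essentially match the paper's (the paper dispatches (\ref{O3}) in one line: right-to-right connections survive concatenation).

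For Condition (\ref{O1}) there is a genuine gap. You correctly identify the candidate meet $b$ with $E(b)=E(p)\cup E(q)$, and your argument that $E(b)$ is non-crossing (as a subset of the non-crossing matching $E(r)$) is fine. But ``planar'' in Definition \ref{def:PlanarLS} requires \emph{two} conditions: non-crossing, \emph{and} no defect lies inside any arc. A subset of the edges of a planar link state need not satisfy the second condition: e.g.\ from the link state on $\underline{3}$ with the single arc $\{1,3\}$ and a defect at $2$ is non-crossing but not planar. So ``$E(r)$ planar $\Rightarrow$ subset planar'' is false as stated, and you have not shown that $b$ is a Temperley--Lieb link state.

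The fix is short but uses planarity of $p$ and $q$ individually rather than of $r$: if $\{i,j\}\in E(b)$, say $\{i,j\}\in E(p)$, then planarity of $p$ forces every vertex in $(i,j)$ to lie on some arc of $E(p)\subseteq E(b)$, so $b$ has no defects in $(i,j)$. The paper argues similarly, first extracting from the existence of $r$ the useful fact that an edge of $p$ not present in $q$ must have both endpoints defect in $q$, and then checking both planarity conditions for $b$ separately.
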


\begin{proof} We must verify the three conditions of Definition \ref{def:LSOrdering}.

We begin with Condition (\ref{item:C1}). Suppose that link states $p$ and $q$ have a common lower bound. By Definition \ref{def:TLLSO}, this means that each connection in $p$ is either present in $q$ or has endpoints which are defects in $q$, and vice versa. In particular, a connection from $p$ and a connection from $q$ are either equal or disjoint. We will frequently use this fact without comment in the remainder of the proof.

We may therefore let $b$ be the link state having all connections from $p$, all connections from $q$, and defects elsewhere. We must show that $b$ is planar. By the definition of planarity (Definition \ref{def:PlanarLS}) this means we must argue that no two connections from $b$ can cross, and that no defect can lie inside a connection.

Suppose first that two connections cross. Since $p$ and $q$ are planar, it must be that one of these connections is drawn from each. Write $c_p$ for the connection from $p$ and $c_q$ for the connection from $q$. Since $c_p$ is not present in $q$, its endpoints must be defects in $q$. Since $c_p$ and $c_q$ overlap, one of these defects must lie inside $c_q$, but this contradicts planarity of $q$. Thus, no two connections in $b$ can cross.

Suppose then that a defect of $b$ (at height $i$, say) lies inside a connection. Without loss of generality, we may assume that $i$ is a defect in $p$ lying inside a connection $c_q$ in $q$. Since $i$ lies inside $c_q$, by planarity of $q$ we must have that $i$ lies at one end of a connection in $q$. But then, by definition, $i$ must also lie at one end of a connection in $b$, contradicting the assumption that $i$ was a defect in $b$. This establishes that $b$ is planar.

By construction, $b$ is a common lower bound for $p$ and $q$, and we must show that it is the greatest such. If $c$ is another common lower bound, then $c \leq p$ implies that $c$ has all connections from $p$, and $c \leq q$ implies that $c$ has all connections from $q$. This says precisely that $c \leq b$, so $b$ is indeed the greatest common lower bound for $p$ and $q$. This establishes that our ordering satisfies Condition (\ref{item:C1}).

Condition (\ref{item:C2}) follows from the observation that if $p < q$ then $p$ must have strictly fewer defects than $q$.

Condition (\ref{item:C3}) holds, since (the underlying diagram of) a product of Temperley-Lieb diagrams $C_{p',q'}^{\sigma'} C_{p,q}^{\sigma}$ must retain all right-to-right connections from $C_{p,q}^{\sigma}$.

This completes the proof. \end{proof}

\begin{lemma} \label{lem:TLLSOGenerating} The link state ordering of Definition \ref{def:TLLSO} is left generating.
\end{lemma}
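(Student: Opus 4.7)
Proof plan. The containment $A\cdot J_q \subseteq J_{\le q}$ is immediate from Condition~\ref{O3} of the link state ordering (which we verified in Lemma~\ref{lem:TLLSO}), so the task is to prove $J_{\le q}\subseteq A\cdot J_q$. Since both sides are spanned by basis elements, it suffices to show that for every basis element $C_{p',q'}^{1}\in J_{\le q}$ (necessarily $\sigma=1$ since $G$ is trivial for TL, and $q'\le q$ in the ordering of Definition~\ref{def:TLLSO}) one can exhibit $a\in \TL_n(\delta)$ and a basis element $C_{p,q}^{1}\in J_q$ with $a\cdot C_{p,q}^{1}=C_{p',q'}^{1}$.

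The plan is to take $p=q$, so that $C_{q,q}^{1}$ is the ``diagonal'' diagram in $J_q$ with both link states equal to $q$ and horizontal strands connecting corresponding defects, and to take $a=C_{p',q_a}^{1}\in M(\mu)$ for some carefully chosen planar link state $q_a$ with $|D_{q_a}|=|D_{q'}|=\mu$. By Proposition~\ref{prop:BrauerDGL} and Lemma~\ref{lem:formInterpretation}, the product $a\cdot C_{q,q}^{1}$ is controlled by the pair graph $\Gamma_{\langle q_a,q\rangle}$: its pair-set cardinality gives the defect count of the product, its cycles contribute factors of $\delta$, and the paths joining defects of $q$ on the two sides determine the new connections that appear in the right link state of the product. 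I would thus choose $q_a$ to ensure (i)~the pair-set has cardinality $|D_{q'}|$, (ii)~the pair graph has no cycles, and (iii)~the paths of the pair graph pair the defects in $D_q\setminus D_{q'}$ exactly according to the new connections of $q'$.

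To enforce (i), I would place the defects of $q_a$ at the positions $D_{q'}\subseteq D_q$. Then every position of $\underline{n}$ outside $D_{q'}$ contributes exactly one $q_a$-edge, and every position outside $D_q$ contributes exactly one $q$-edge; consequently, positions in $D_{q'}$ are isolated (with defects on both sides), positions in $D_q\setminus D_{q'}$ are degree-$1$ (endpoints), and all other positions are degree-$2$. The non-isolated part of the pair graph is automatically a disjoint union of paths (whose endpoints lie in $D_q\setminus D_{q'}$) together with possible cycles, so the task reduces to choosing the planar matching $q_a$ on $\underline{n}\setminus D_{q'}$ so that no cycle appears and the path endpoints reproduce the non-crossing matching of $D_q\setminus D_{q'}$ that comes from the new connections of $q'$.

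The main obstacle is thus the planar combinatorial construction of $q_a$. I would prove its existence by induction on the number of connections of $q$. When $q$ has no connections, I can simply take $q_a=q'$: every edge of the pair graph then comes from $q_a$, the graph is a disjoint union of isolated vertices (at $D_{q'}$) and single edges (the new pairings of $q'$), and conditions (i)--(iii) hold trivially. For the inductive step, I would peel off an innermost connection $c=(u,u+1)$ of $q$ (which exists by planarity), use the fact that the region enclosed by $c$ is empty to isolate its combinatorial contribution, and produce $q_a$ from the inductive output on the smaller problem by a local modification near $u,u+1$: a planar ``shift'' that pairs $u$ and $u+1$ with neighbouring positions so as to avoid the forbidden doubled edge $(u,u+1)$ that would create a cycle. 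Compatibility with the outer non-crossing structure is automatic because the innermost connection separates the diagram cleanly. Once $q_a$ is in hand, a direct application of Lemma~\ref{lem:formInterpretation} verifies $a\cdot C_{q,q}^{1}=C_{p',q'}^{1}$ with coefficient $1$, completing the proof.
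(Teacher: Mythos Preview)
Your strategy of fixing the right factor to be the diagonal diagram $C_{q,q}^{1}$ and then searching for a planar $q_a$ with $D_{q_a}=D_{q'}$ does not work in general. Consider $n=6$, $q=\{(1,2)\}$ with defects $\{3,4,5,6\}$, and $q'=\{(1,2),(5,6)\}$ with defects $\{3,4\}$, so that $q'<q$. Your recipe demands a planar link state $q_a$ with defects exactly at $D_{q'}=\{3,4\}$. But planarity (Definition~\ref{def:PlanarLS}) forbids any connection enclosing a defect, so the only connections available are $(1,2)$ and $(5,6)$, forcing $q_a=\{(1,2),(5,6)\}$. The pair graph $\Gamma_{\langle q_a,q\rangle}$ then has a double edge at $(1,2)$, hence a cycle, and the product acquires an unwanted factor of $\delta$. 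No inductive ``planar shift'' can rescue this, since the choice of $q_a$ was already forced. (One can verify that with $D_{q_a}\neq D_{q'}$, e.g.\ $q_a=\{(2,3),(5,6)\}$ with defects at $1,4$, the product does come out correctly; so it is specifically your commitment to $D_{q_a}=D_{q'}$ that fails.)

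The paper's proof avoids this trap by not insisting on $C_{q,q}^{1}$ as the right factor. Instead it constructs both factors simultaneously by a direct diagrammatic ``stretch and detour'' argument: the connections of $q$ stay on the far right, the new connections of $q'$ are routed through the middle layer, and the fact that $n-t$ is even allows the topmost through-strand to zigzag through the remaining middle vertices without creating any closed loops. The right factor that emerges has right link state $q$ but a left link state tailored to $q'$, which is exactly the freedom your approach gave up.
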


\begin{proof} Fix a Temperley-Lieb link state $q$. It suffices to argue that any diagram $C_{p,q'}^{1}$ with right link state $q'$ strictly less than $q$ is a left multiple of a diagram with right link state precisely $q$. Note that by Condition \ref{item:C2} of Definition \ref{def:LSOrdering}, this implies that $q$ has at least one defect (since $q'$ must have strictly fewer).

We will take the liberty of giving the remainder of the proof by pictures. These pictures will be drawn in the case $n=11$ (i.e. in $\TL_{11}(\delta)$), where
\begin{center}
$q = $
\quad
\begin{tikzpicture}[x=1.5cm,y=-.5cm,baseline=-1.05cm]

\def\wid{2}
\def\hei{0.5}
\def\nodesize{3}
\def\ang{90}

\node[v, minimum size=\nodesize] (c1) at (2* \wid,0*\hei) {};
\node[v, minimum size=\nodesize] (c2) at (2* \wid,1*\hei) {};
\node[v, minimum size=\nodesize] (c3) at (2* \wid,2*\hei) {};
\node[v, minimum size=\nodesize] (c4) at (2* \wid,3*\hei) {};
\node[v, minimum size=\nodesize] (c5) at (2* \wid,4*\hei) {};
\node[v, minimum size=\nodesize] (c6) at (2* \wid,5*\hei) {};
\node[v, minimum size=\nodesize] (c7) at (2* \wid,6*\hei) {};
\node[v, minimum size=\nodesize] (c8) at (2* \wid,7*\hei) {};
\node[v, minimum size=\nodesize] (c9) at (2* \wid,8*\hei) {};
\node[v, minimum size=\nodesize] (c10) at (2* \wid,9*\hei) {};
\node[v, minimum size=\nodesize] (c11) at (2* \wid,10*\hei) {};

\draw[e] (c2) to[out=180, in=180] (c3);
\draw[e] (c6) to[out=180, in=180] (c5);
\draw[e] (c8) to[out=180, in=180] (c9);

\draw[e] (c1) to [out=180,in=0] (1.5*\wid,0*\hei);
\draw[e] (c4) to [out=180,in=0] (1.5*\wid,3*\hei);
\draw[e] (c7) to [out=180,in=0] (1.5*\wid,6*\hei);
\draw[e] (c10) to [out=180,in=0] (1.5*\wid,9*\hei);
\draw[e] (c11) to [out=180,in=0] (1.5*\wid,10*\hei);

\end{tikzpicture}
\quad
, $q' = $
\quad
\begin{tikzpicture}[x=1.5cm,y=-.5cm,baseline=-1.05cm]

\def\wid{2}
\def\hei{0.5}
\def\nodesize{3}
\def\ang{90}

\node[v, minimum size=\nodesize] (c1) at (2* \wid,0*\hei) {};
\node[v, minimum size=\nodesize] (c2) at (2* \wid,1*\hei) {};
\node[v, minimum size=\nodesize] (c3) at (2* \wid,2*\hei) {};
\node[v, minimum size=\nodesize] (c4) at (2* \wid,3*\hei) {};
\node[v, minimum size=\nodesize] (c5) at (2* \wid,4*\hei) {};
\node[v, minimum size=\nodesize] (c6) at (2* \wid,5*\hei) {};
\node[v, minimum size=\nodesize] (c7) at (2* \wid,6*\hei) {};
\node[v, minimum size=\nodesize] (c8) at (2* \wid,7*\hei) {};
\node[v, minimum size=\nodesize] (c9) at (2* \wid,8*\hei) {};
\node[v, minimum size=\nodesize] (c10) at (2* \wid,9*\hei) {};
\node[v, minimum size=\nodesize] (c11) at (2* \wid,10*\hei) {};

\draw[e] (c2) to[out=180, in=180] (c3);
\draw[e] (c6) to[out=180, in=180] (c5);
\draw[e] (c8) to[out=180, in=180] (c9);

\draw[e] (c7) to[out=180, in=180] (c10);
\draw[e] (c4) to[out=180, in=180] (c11);

\draw[e] (c1) to [out=180,in=0] (1.5*\wid,0*\hei);

\end{tikzpicture}
\quad
, $C_{p,q'}^{1}$ = 
\begin{tikzpicture}[x=1.5cm,y=-.5cm,baseline=-1.05cm]

\def\wid{2}
\def\hei{0.5}
\def\nodesize{3}
\def\ang{90}

\node[v, minimum size=\nodesize] (b1) at (1* \wid,0*\hei) {};
\node[v, minimum size=\nodesize] (b2) at (1* \wid,1*\hei) {};
\node[v, minimum size=\nodesize] (b3) at (1* \wid,2*\hei) {};
\node[v, minimum size=\nodesize] (b4) at (1* \wid,3*\hei) {};
\node[v, minimum size=\nodesize] (b5) at (1* \wid,4*\hei) {};
\node[v, minimum size=\nodesize] (b6) at (1* \wid,5*\hei) {};
\node[v, minimum size=\nodesize] (b7) at (1* \wid,6*\hei) {};
\node[v, minimum size=\nodesize] (b8) at (1* \wid,7*\hei) {};
\node[v, minimum size=\nodesize] (b9) at (1* \wid,8*\hei) {};
\node[v, minimum size=\nodesize] (b10) at (1* \wid,9*\hei) {};
\node[v, minimum size=\nodesize] (b11) at (1* \wid,10*\hei) {};

\node[v, minimum size=\nodesize] (c1) at (2* \wid,0*\hei) {};
\node[v, minimum size=\nodesize] (c2) at (2* \wid,1*\hei) {};
\node[v, minimum size=\nodesize] (c3) at (2* \wid,2*\hei) {};
\node[v, minimum size=\nodesize] (c4) at (2* \wid,3*\hei) {};
\node[v, minimum size=\nodesize] (c5) at (2* \wid,4*\hei) {};
\node[v, minimum size=\nodesize] (c6) at (2* \wid,5*\hei) {};
\node[v, minimum size=\nodesize] (c7) at (2* \wid,6*\hei) {};
\node[v, minimum size=\nodesize] (c8) at (2* \wid,7*\hei) {};
\node[v, minimum size=\nodesize] (c9) at (2* \wid,8*\hei) {};
\node[v, minimum size=\nodesize] (c10) at (2* \wid,9*\hei) {};
\node[v, minimum size=\nodesize] (c11) at (2* \wid,10*\hei) {};

\draw[e] (c2) to[out=180, in=180] (c3);
\draw[e] (c6) to[out=180, in=180] (c5);
\draw[e] (c8) to[out=180, in=180] (c9);

\draw[e] (c7) to[out=180, in=180] (c10);
\draw[e] (c4) to[out=180, in=180] (c11);

\draw[e] (c1) to [out=180,in=0] (b5);

\draw[e] (b1) to [out=0,in=0] (b2);
\draw[e] (b3) to [out=0,in=0] (b4);
\draw[e] (b6) to [out=0,in=0] (b9);
\draw[e] (b7) to [out=0,in=0] (b8);
\draw[e] (b10) to [out=0,in=0] (b11);

\end{tikzpicture}
\quad
\end{center}

First `stretch out' the right hand side of $C_{p,q'}^1$, so that the connections which were already present in $q$ stay as they were, and the connections which are new to $q'$ come into the middle. In the example this looks as follows.

\begin{center}
$C_{p,q'}^{1}$ =
\begin{tikzpicture}[x=1.5cm,y=-.5cm,baseline=-1.05cm]

\def\wid{2}
\def\hei{0.5}
\def\nodesize{3}
\def\ang{90}

\node[v, minimum size=\nodesize] (a1) at (0,0*\hei) {};
\node[v, minimum size=\nodesize] (a2) at (0,1*\hei) {};
\node[v, minimum size=\nodesize] (a3) at (0,2*\hei) {};
\node[v, minimum size=\nodesize] (a4) at (0,3*\hei) {};
\node[v, minimum size=\nodesize] (a5) at (0,4*\hei) {};
\node[v, minimum size=\nodesize] (a6) at (0,5*\hei) {};
\node[v, minimum size=\nodesize] (a7) at (0,6*\hei) {};
\node[v, minimum size=\nodesize] (a8) at (0,7*\hei) {};
\node[v, minimum size=\nodesize] (a9) at (0,8*\hei) {};
\node[v, minimum size=\nodesize] (a10) at (0,9*\hei) {};
\node[v, minimum size=\nodesize] (a11) at (0,10*\hei) {};

\node[u] (b1) at (1* \wid,0*\hei) {};
\node[u] (b4) at (1* \wid,3*\hei) {};
\node[u] (b7) at (1* \wid,6*\hei) {};
\node[u] (b10) at (1* \wid,9*\hei) {};
\node[u] (b11) at (1* \wid,10*\hei) {};

\node[v, minimum size=\nodesize] (c1) at (2* \wid,0*\hei) {};
\node[v, minimum size=\nodesize] (c2) at (2* \wid,1*\hei) {};
\node[v, minimum size=\nodesize] (c3) at (2* \wid,2*\hei) {};
\node[v, minimum size=\nodesize] (c4) at (2* \wid,3*\hei) {};
\node[v, minimum size=\nodesize] (c5) at (2* \wid,4*\hei) {};
\node[v, minimum size=\nodesize] (c6) at (2* \wid,5*\hei) {};
\node[v, minimum size=\nodesize] (c7) at (2* \wid,6*\hei) {};
\node[v, minimum size=\nodesize] (c8) at (2* \wid,7*\hei) {};
\node[v, minimum size=\nodesize] (c9) at (2* \wid,8*\hei) {};
\node[v, minimum size=\nodesize] (c10) at (2* \wid,9*\hei) {};
\node[v, minimum size=\nodesize] (c11) at (2* \wid,10*\hei) {};

\draw[e] (c2) to[out=180, in=180] (c3);
\draw[e] (c6) to[out=180, in=180] (c5);
\draw[e] (c8) to[out=180, in=180] (c9);

\draw[e] (b7) to[out=180, in=180] (b10);
\draw[e] (b4) to[out=180, in=180] (b11);
\draw[e] (c4) to [out=180,in=0] (b4);
\draw[e] (c7) to [out=180,in=0] (b7);
\draw[e] (c10) to [out=180,in=0] (b10);
\draw[e] (c11) to [out=180,in=0] (b11);

\draw[e] (c1) to [out=180,in=0] (b1);
\draw[e] (b1) to [out=180,in=0] (a5);

\draw[e] (a1) to [out=0,in=0] (a2);
\draw[e] (a3) to [out=0,in=0] (a4);
\draw[e] (a6) to [out=0,in=0] (a9);
\draw[e] (a7) to [out=0,in=0] (a8);
\draw[e] (a10) to [out=0,in=0] (a11);

\end{tikzpicture}
\quad
\end{center}

We add vertices at the intersection of these connections with the central vertical, push these new vertices to the bottom, and insert new ones above:
\begin{center}
$C_{p,q'}^{1}$ ``=''
\quad
\begin{tikzpicture}[x=1.5cm,y=-.5cm,baseline=-1.05cm]

\def\wid{2}
\def\hei{0.5}
\def\nodesize{3}
\def\ang{90}

\node[v, minimum size=\nodesize] (a1) at (0,0*\hei) {};
\node[v, minimum size=\nodesize] (a2) at (0,1*\hei) {};
\node[v, minimum size=\nodesize] (a3) at (0,2*\hei) {};
\node[v, minimum size=\nodesize] (a4) at (0,3*\hei) {};
\node[v, minimum size=\nodesize] (a5) at (0,4*\hei) {};
\node[v, minimum size=\nodesize] (a6) at (0,5*\hei) {};
\node[v, minimum size=\nodesize] (a7) at (0,6*\hei) {};
\node[v, minimum size=\nodesize] (a8) at (0,7*\hei) {};
\node[v, minimum size=\nodesize] (a9) at (0,8*\hei) {};
\node[v, minimum size=\nodesize] (a10) at (0,9*\hei) {};
\node[v, minimum size=\nodesize] (a11) at (0,10*\hei) {};

\node[v, minimum size=\nodesize] (b1) at (1* \wid,0*\hei) {};
\node[v, minimum size=\nodesize] (b2) at (1* \wid,1*\hei) {};
\node[v, minimum size=\nodesize] (b3) at (1* \wid,2*\hei) {};
\node[v, minimum size=\nodesize] (b4) at (1* \wid,3*\hei) {};
\node[v, minimum size=\nodesize] (b5) at (1* \wid,4*\hei) {};
\node[v, minimum size=\nodesize] (b6) at (1* \wid,5*\hei) {};
\node[v, minimum size=\nodesize] (b7) at (1* \wid,6*\hei) {};
\node[v, minimum size=\nodesize] (b8) at (1* \wid,7*\hei) {};
\node[v, minimum size=\nodesize] (b9) at (1* \wid,8*\hei) {};
\node[v, minimum size=\nodesize] (b10) at (1* \wid,9*\hei) {};
\node[v, minimum size=\nodesize] (b11) at (1* \wid,10*\hei) {};

\node[v, minimum size=\nodesize] (c1) at (2* \wid,0*\hei) {};
\node[v, minimum size=\nodesize] (c2) at (2* \wid,1*\hei) {};
\node[v, minimum size=\nodesize] (c3) at (2* \wid,2*\hei) {};
\node[v, minimum size=\nodesize] (c4) at (2* \wid,3*\hei) {};
\node[v, minimum size=\nodesize] (c5) at (2* \wid,4*\hei) {};
\node[v, minimum size=\nodesize] (c6) at (2* \wid,5*\hei) {};
\node[v, minimum size=\nodesize] (c7) at (2* \wid,6*\hei) {};
\node[v, minimum size=\nodesize] (c8) at (2* \wid,7*\hei) {};
\node[v, minimum size=\nodesize] (c9) at (2* \wid,8*\hei) {};
\node[v, minimum size=\nodesize] (c10) at (2* \wid,9*\hei) {};
\node[v, minimum size=\nodesize] (c11) at (2* \wid,10*\hei) {};

\draw[e] (c2) to[out=180, in=180] (c3);
\draw[e] (c6) to[out=180, in=180] (c5);
\draw[e] (c8) to[out=180, in=180] (c9);

\draw[e] (b9) to[out=180, in=180] (b10);
\draw[e] (b8) to[out=180, in=180] (b11);
\draw[e] (c4) to [out=180,in=0] (b8);
\draw[e] (c7) to [out=180,in=0] (b9);
\draw[e] (c10) to [out=180,in=0] (b10);
\draw[e] (c11) to [out=180,in=0] (b11);

\draw[e] (c1) to [out=180,in=0] (b7);
\draw[e] (b7) to [out=180,in=0] (a5);

\draw[e] (a1) to [out=0,in=0] (a2);
\draw[e] (a3) to [out=0,in=0] (a4);
\draw[e] (a6) to [out=0,in=0] (a9);
\draw[e] (a7) to [out=0,in=0] (a8);
\draw[e] (a10) to [out=0,in=0] (a11);

\end{tikzpicture}
\quad
\end{center}

The number of new vertices inserted is $n-t$, where $t$ is the number of defects in $q$ (i.e. $q \in M(t)$). It is automatic that $n-t$ must be even. As noted at the start of the proof, $t \geq 1$, so there exists at least one left-to-right connection in the right-hand half of this `diagram'. The topmost such connection may now take a `detour' through the remaining vertices, since there are an even number of them:

\begin{center}
$C_{p,q'}^{1}$ ``=''
\quad
\begin{tikzpicture}[x=1.5cm,y=-.5cm,baseline=-1.05cm]

\def\wid{2}
\def\hei{0.5}
\def\nodesize{3}
\def\ang{90}

\node[v, minimum size=\nodesize] (a1) at (0,0*\hei) {};
\node[v, minimum size=\nodesize] (a2) at (0,1*\hei) {};
\node[v, minimum size=\nodesize] (a3) at (0,2*\hei) {};
\node[v, minimum size=\nodesize] (a4) at (0,3*\hei) {};
\node[v, minimum size=\nodesize] (a5) at (0,4*\hei) {};
\node[v, minimum size=\nodesize] (a6) at (0,5*\hei) {};
\node[v, minimum size=\nodesize] (a7) at (0,6*\hei) {};
\node[v, minimum size=\nodesize] (a8) at (0,7*\hei) {};
\node[v, minimum size=\nodesize] (a9) at (0,8*\hei) {};
\node[v, minimum size=\nodesize] (a10) at (0,9*\hei) {};
\node[v, minimum size=\nodesize] (a11) at (0,10*\hei) {};

\node[v, minimum size=\nodesize] (b1) at (1* \wid,0*\hei) {};
\node[v, minimum size=\nodesize] (b2) at (1* \wid,1*\hei) {};
\node[v, minimum size=\nodesize] (b3) at (1* \wid,2*\hei) {};
\node[v, minimum size=\nodesize] (b4) at (1* \wid,3*\hei) {};
\node[v, minimum size=\nodesize] (b5) at (1* \wid,4*\hei) {};
\node[v, minimum size=\nodesize] (b6) at (1* \wid,5*\hei) {};
\node[v, minimum size=\nodesize] (b7) at (1* \wid,6*\hei) {};
\node[v, minimum size=\nodesize] (b8) at (1* \wid,7*\hei) {};
\node[v, minimum size=\nodesize] (b9) at (1* \wid,8*\hei) {};
\node[v, minimum size=\nodesize] (b10) at (1* \wid,9*\hei) {};
\node[v, minimum size=\nodesize] (b11) at (1* \wid,10*\hei) {};

\node[v, minimum size=\nodesize] (c1) at (2* \wid,0*\hei) {};
\node[v, minimum size=\nodesize] (c2) at (2* \wid,1*\hei) {};
\node[v, minimum size=\nodesize] (c3) at (2* \wid,2*\hei) {};
\node[v, minimum size=\nodesize] (c4) at (2* \wid,3*\hei) {};
\node[v, minimum size=\nodesize] (c5) at (2* \wid,4*\hei) {};
\node[v, minimum size=\nodesize] (c6) at (2* \wid,5*\hei) {};
\node[v, minimum size=\nodesize] (c7) at (2* \wid,6*\hei) {};
\node[v, minimum size=\nodesize] (c8) at (2* \wid,7*\hei) {};
\node[v, minimum size=\nodesize] (c9) at (2* \wid,8*\hei) {};
\node[v, minimum size=\nodesize] (c10) at (2* \wid,9*\hei) {};
\node[v, minimum size=\nodesize] (c11) at (2* \wid,10*\hei) {};

\draw[e] (c2) to[out=180, in=180] (c3);
\draw[e] (c6) to[out=180, in=180] (c5);
\draw[e] (c8) to[out=180, in=180] (c9);

\draw[e] (b9) to[out=180, in=180] (b10);
\draw[e] (b8) to[out=180, in=180] (b11);
\draw[e] (c4) to [out=180,in=0] (b8);
\draw[e] (c7) to [out=180,in=0] (b9);
\draw[e] (c10) to [out=180,in=0] (b10);
\draw[e] (c11) to [out=180,in=0] (b11);

\draw[e] (c1) to [out=180,in=0] (b1);
\draw[e] (b1) to[out=180, in=180] (b2);
\draw[e] (b2) to[out=0, in=0] (b3);
\draw[e] (b3) to[out=180, in=180] (b4);
\draw[e] (b4) to[out=0, in=0] (b5);
\draw[e] (b5) to[out=180, in=180] (b6);
\draw[e] (b6) to[out=0, in=0] (b7);
\draw[e] (b7) to [out=180,in=0] (a5);

\draw[pattern=dots,draw=none] (2* \wid,0*\hei) -- (1* \wid,0*\hei) to[out=180, in=180] (1* \wid,1*\hei) to[out=0, in=0] (1* \wid,2*\hei) to[out=180, in=180] (1* \wid,3*\hei) to[out=0, in=0] (1* \wid,4*\hei) to[out=180, in=180] (1* \wid,5*\hei) to[out=0, in=0] (1* \wid,6*\hei) to [out=0,in=180] (2* \wid,0*\hei);

\draw[e,dashed] (c1) to [out=180,in=0] (b7);

\draw[e] (a1) to [out=0,in=0] (a2);
\draw[e] (a3) to [out=0,in=0] (a4);
\draw[e] (a6) to [out=0,in=0] (a9);
\draw[e] (a7) to [out=0,in=0] (a8);
\draw[e] (a10) to [out=0,in=0] (a11);

\end{tikzpicture}
\quad
\end{center}

This expresses $C_{p,q}^1$ as a product of the desired form, namely:

\begin{center}
$C_{p,q'}^{1}$ =
\quad
\begin{tikzpicture}[x=1.5cm,y=-.5cm,baseline=-1.05cm]

\def\wid{2}
\def\hei{0.5}
\def\nodesize{3}
\def\ang{90}

\node[v, minimum size=\nodesize] (a1) at (0,0*\hei) {};
\node[v, minimum size=\nodesize] (a2) at (0,1*\hei) {};
\node[v, minimum size=\nodesize] (a3) at (0,2*\hei) {};
\node[v, minimum size=\nodesize] (a4) at (0,3*\hei) {};
\node[v, minimum size=\nodesize] (a5) at (0,4*\hei) {};
\node[v, minimum size=\nodesize] (a6) at (0,5*\hei) {};
\node[v, minimum size=\nodesize] (a7) at (0,6*\hei) {};
\node[v, minimum size=\nodesize] (a8) at (0,7*\hei) {};
\node[v, minimum size=\nodesize] (a9) at (0,8*\hei) {};
\node[v, minimum size=\nodesize] (a10) at (0,9*\hei) {};
\node[v, minimum size=\nodesize] (a11) at (0,10*\hei) {};

\node[v, minimum size=\nodesize] (b1) at (1* \wid,0*\hei) {};
\node[v, minimum size=\nodesize] (b2) at (1* \wid,1*\hei) {};
\node[v, minimum size=\nodesize] (b3) at (1* \wid,2*\hei) {};
\node[v, minimum size=\nodesize] (b4) at (1* \wid,3*\hei) {};
\node[v, minimum size=\nodesize] (b5) at (1* \wid,4*\hei) {};
\node[v, minimum size=\nodesize] (b6) at (1* \wid,5*\hei) {};
\node[v, minimum size=\nodesize] (b7) at (1* \wid,6*\hei) {};
\node[v, minimum size=\nodesize] (b8) at (1* \wid,7*\hei) {};
\node[v, minimum size=\nodesize] (b9) at (1* \wid,8*\hei) {};
\node[v, minimum size=\nodesize] (b10) at (1* \wid,9*\hei) {};
\node[v, minimum size=\nodesize] (b11) at (1* \wid,10*\hei) {};

\draw[e] (b9) to[out=180, in=180] (b10);
\draw[e] (b8) to[out=180, in=180] (b11);

\draw[e] (b1) to[out=180, in=180] (b2);
\draw[e] (b3) to[out=180, in=180] (b4);
\draw[e] (b5) to[out=180, in=180] (b6);
\draw[e] (b7) to [out=180,in=0] (a5);

\draw[e] (a1) to [out=0,in=0] (a2);
\draw[e] (a3) to [out=0,in=0] (a4);
\draw[e] (a6) to [out=0,in=0] (a9);
\draw[e] (a7) to [out=0,in=0] (a8);
\draw[e] (a10) to [out=0,in=0] (a11);

\end{tikzpicture}
\quad
$\cdot$
\quad
\begin{tikzpicture}[x=1.5cm,y=-.5cm,baseline=-1.05cm]

\def\wid{2}
\def\hei{0.5}
\def\nodesize{3}
\def\ang{90}

\node[v, minimum size=\nodesize] (b1) at (1* \wid,0*\hei) {};
\node[v, minimum size=\nodesize] (b2) at (1* \wid,1*\hei) {};
\node[v, minimum size=\nodesize] (b3) at (1* \wid,2*\hei) {};
\node[v, minimum size=\nodesize] (b4) at (1* \wid,3*\hei) {};
\node[v, minimum size=\nodesize] (b5) at (1* \wid,4*\hei) {};
\node[v, minimum size=\nodesize] (b6) at (1* \wid,5*\hei) {};
\node[v, minimum size=\nodesize] (b7) at (1* \wid,6*\hei) {};
\node[v, minimum size=\nodesize] (b8) at (1* \wid,7*\hei) {};
\node[v, minimum size=\nodesize] (b9) at (1* \wid,8*\hei) {};
\node[v, minimum size=\nodesize] (b10) at (1* \wid,9*\hei) {};
\node[v, minimum size=\nodesize] (b11) at (1* \wid,10*\hei) {};

\node[v, minimum size=\nodesize] (c1) at (2* \wid,0*\hei) {};
\node[v, minimum size=\nodesize] (c2) at (2* \wid,1*\hei) {};
\node[v, minimum size=\nodesize] (c3) at (2* \wid,2*\hei) {};
\node[v, minimum size=\nodesize] (c4) at (2* \wid,3*\hei) {};
\node[v, minimum size=\nodesize] (c5) at (2* \wid,4*\hei) {};
\node[v, minimum size=\nodesize] (c6) at (2* \wid,5*\hei) {};
\node[v, minimum size=\nodesize] (c7) at (2* \wid,6*\hei) {};
\node[v, minimum size=\nodesize] (c8) at (2* \wid,7*\hei) {};
\node[v, minimum size=\nodesize] (c9) at (2* \wid,8*\hei) {};
\node[v, minimum size=\nodesize] (c10) at (2* \wid,9*\hei) {};
\node[v, minimum size=\nodesize] (c11) at (2* \wid,10*\hei) {};

\draw[e] (c2) to[out=180, in=180] (c3);
\draw[e] (c6) to[out=180, in=180] (c5);
\draw[e] (c8) to[out=180, in=180] (c9);

\draw[e] (c4) to [out=180,in=0] (b8);
\draw[e] (c7) to [out=180,in=0] (b9);
\draw[e] (c10) to [out=180,in=0] (b10);
\draw[e] (c11) to [out=180,in=0] (b11);

\draw[e] (c1) to [out=180,in=0] (b1);
\draw[e] (b2) to[out=0, in=0] (b3);
\draw[e] (b4) to[out=0, in=0] (b5);
\draw[e] (b6) to[out=0, in=0] (b7);

\end{tikzpicture}
\quad
\end{center}

This completes the proof. \end{proof}

\subsection{The idempotent cover}

In \cite{Sroka}, Sroka constructs a resolution of the trivial module $\t$ by projective left $\TL_n(\delta)$-modules. In the language of \cite{Boyde2}, his complex can be described as an \emph{idempotent cover} (indeed, it was the motivation for introducing that terminology). In this section, we will reconstruct Sroka's complex in this language.

For $1 \leq i \leq n-1$, let $K_i$ be the left ideal of $\TL_n(\delta)$ spanned by those diagrams which have a connection between the (right-hand) nodes $i'$ and $(i+1)'$, and let $I_{\leq n-1}$ be the twosided ideal spanned by diagrams with fewer than $n$ left-to-right connections.

\begin{lemma} There is an equality of left ideals $K_1 + \dots + K_{n-1} = I_{\leq n-1}$, where the sum is the internal sum in $\TL_n(\delta)$.
\end{lemma}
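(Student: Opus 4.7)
The plan is to prove the two inclusions separately. Both will be verified on the canonical basis of Temperley-Lieb diagrams.

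For the inclusion $K_1 + \cdots + K_{n-1} \subseteq I_{\leq n-1}$, I would first note that each $K_i$ is spanned by diagrams containing the right-right connection $\{i',(i+1)'\}$. Any such diagram has two right-hand vertices paired with each other, hence at most $n-1$ left-to-right through-strands, so its basis element lies in $I_{\leq n-1}$. Since $I_{\leq n-1}$ is a twosided (in particular left) ideal, it contains each $K_i$, hence their sum.

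For the reverse inclusion $I_{\leq n-1} \subseteq K_1 + \cdots + K_{n-1}$, it suffices to show that every basis diagram $d$ of $I_{\leq n-1}$ already lies in some $K_i$. Such a $d$ has strictly fewer than $n$ left-to-right connections, so by a parity/counting argument (the right-hand vertices which are not endpoints of through-strands must be paired among themselves in even number), at least two right-hand vertices are joined by a right-right arc. I would then appeal to planarity: among all right-right arcs of $d$, pick an innermost one (that is, one whose enclosed cyclic interval $(i',j')$ on the right side contains no other right-right arc endpoints); planarity forces every vertex in $(i',j')$ to be an endpoint of a right-right arc of $d$, and innermostness forces the interval to be empty, i.e. $j' = (i+1)'$. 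Thus $d$ has the connection $\{i',(i+1)'\}$ and so $d \in K_i$.

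I expect the only mildly delicate step to be the innermost-arc argument for the reverse inclusion; everything else is bookkeeping. Since I want the final statement as a clean equality of left ideals, I would explicitly note that both sides are spanned by bases of diagrams (so equality as $R$-submodules follows from checking basis containment), and that the sum on the left is automatically a left ideal as a sum of left ideals.
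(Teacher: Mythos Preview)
Your proposal is correct and follows essentially the same approach as the paper: both prove the two inclusions on basis diagrams, with the easy direction identical and the reverse inclusion handled by a planarity-based ``innermost arc'' argument. The only cosmetic difference is that the paper phrases this as an explicit descent (pick any right-right arc, pass to a strictly shorter one inside it, repeat) whereas you pick a minimal arc directly; also note that for $\TL_n(\delta)$ the relevant interval is linear rather than cyclic.
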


In the language of \cite{Boyde2}, this says that the ideals $K_i$ \emph{cover} $I_{\leq n-1}$.

\begin{proof} Any diagram with a right-to-right connection must have fewer than $n$ primed vertices with a left-to-right connection, hence in particular fewer than $n$ left-to-right connections. This gives that $K_i \subset I_{\leq n-1}$ for each $i$, and it suffices to show that any diagram with fewer than $n$ left-to-right connections must be contained in some $K_i$.

Any diagram having fewer than $n$ left-to-right connections must have a primed vertex without a left-to-right connection. This vertex, $j_0'$, say, must be connected via a right-to-right connection to some other primed vertex $k_0'$. If $|j_0-k_0|=1$ then we are done, otherwise, since the right link state is planar (Definition \ref{def:PlanarLS}), the primed vertices lying between $j_0'$ and $k_0'$ can only be connected to one another. Choose a connection among this set, say from $j_1$ to $k_1$, and repeat. Necessarily at each stage we have $|j_\ell-k_\ell| < |j_{\ell-1}-k_{\ell-1}|$, so we must eventually reach a connection between adjacent vertices $i$ and $i+1$, which shows that the diagram lies in $K_i$, as required.
\end{proof}

Following Sroka, call a subset $S$ of $\underline{n}$ \emph{innermost} if there exists no $i$ for which $i$ and $i+1$ are both elements of $S$. The following lemma is immediate.

\begin{lemma} \label{lem:intersectionDescription} For $S \subset \underline{n}$, the intersection $$\bigcap_{i \in S} K_i$$ is the $R$-span of those diagrams having a connection from $i'$ to $(i+1)'$ for each $i \in S$. This intersection is non-zero if and only if $S$ is innermost. \qed
\end{lemma}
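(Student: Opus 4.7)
The plan is to prove the two assertions separately, both by direct combinatorial inspection of the canonical diagram basis.

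For the first claim, I would observe that by definition each left ideal $K_i$ is the $R$-span of a subset $B_i$ of the canonical basis of $\TL_n(\delta)$ (namely those planar diagrams whose right link state contains the edge $i' \frown (i+1)'$). Since $\TL_n(\delta)$ is $R$-free on the basis of planar diagrams, and each $K_i$ is spanned by (and free on) a subset of that basis, the intersection $\bigcap_{i \in S} K_i$ is the $R$-span of $\bigcap_{i \in S} B_i$. This intersection of subsets is exactly the set of planar diagrams that carry the edge $i' \frown (i+1)'$ for every $i \in S$, which gives the stated description.

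For the forward direction of the second claim, I would argue by contrapositive: if $S$ is not innermost, then there exists some $i$ with $i, i+1 \in S$. Any diagram in $\bigcap_{i \in S} K_i$ would then contain both the edge $i' \frown (i+1)'$ and the edge $(i+1)' \frown (i+2)'$, but in a Temperley--Lieb (indeed Brauer) diagram every vertex lies in exactly one part of cardinality at most two, so the vertex $(i+1)'$ cannot be incident to two distinct edges. Hence no such diagram exists, and the intersection is zero.

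For the converse, when $S$ is innermost, I would exhibit an explicit planar diagram witnessing non-triviality. By the innermost condition, the pairs $\{i',(i+1)'\}$ for $i \in S$ are pairwise disjoint, so the set of edges $\{i' \frown (i+1)' : i \in S\}$ is a valid partial matching on the right-hand vertices, and it is planar since all these edges are between consecutive vertices. One can then complete this to a planar Temperley--Lieb diagram by for example pairing up the remaining right-hand vertices with left-hand vertices in the unique order-preserving way (and pairing any leftover left-hand vertices in nested planar arcs). This diagram is a basis element of $\bigcap_{i \in S} K_i$, so the intersection is non-zero.

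The only step with any subtlety is the first one: that the intersection of $R$-spans of basis subsets equals the $R$-span of their intersection. This is immediate from $R$-freeness on the diagram basis, so there is no real obstacle and the lemma is essentially unpacking of definitions.
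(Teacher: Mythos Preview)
Your proof is correct and is exactly the direct unpacking the paper has in mind: the lemma is stated there as ``immediate'' with no argument given, and your basis-subset argument for the first claim together with the pigeonhole obstruction and explicit witness for the second is the natural way to see it. The only minor imprecision is the phrase ``the unique order-preserving way'' (there are more left vertices than remaining right vertices, so a choice is involved); a cleaner explicit witness is the diagram $C^1_{q,q}$ where $q$ is the planar link state with arcs $\{i,i+1\}$ for $i\in S$ and defects elsewhere.
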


\begin{lemma} \label{lem:intersectionsToJ} For $S \subset \underline{n}$ innermost, let $q=q(S)$ be the link state having a connection from $i$ to $i+1$ whenever $i \in S$, and defects elsewhere. We have an equality of left ideals $$\bigcap_{i \in S} K_i = J_{\leq q}.$$ \end{lemma}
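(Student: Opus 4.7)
The plan is to prove the equality by unpacking both sides via the basis description and observing that they coincide exactly.

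First I would recall that $J_{\leq q} = \Span_R\{C_{p',q'}^{\sigma} \mid q' \leq q\}$ (for $\TL_n(\delta)$, $\sigma$ is always the identity and $p', q'$ range over planar link states). By Definition \ref{def:TLLSO}, $q' \leq q$ means every connection present in $q$ is also present in $q'$. Since $q = q(S)$ has precisely the connections $\{i, i+1\}$ for $i \in S$ (and defects elsewhere), the condition $q' \leq q$ is exactly the condition that $q'$ has a connection between $i$ and $i+1$ for every $i \in S$.

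Next, by Lemma \ref{lem:intersectionDescription}, $\bigcap_{i \in S} K_i$ is the $R$-span of diagrams having a connection between $i'$ and $(i+1)'$ for each $i \in S$. Under the {\wossit} datum for $\TL_n(\delta)$ (Example \ref{ex:TL}), the primed half of the diagram $C_{p', q'}^{\sigma}$ is the image of the right link state $q'$ under the priming bijection, so a diagram has a connection from $i'$ to $(i+1)'$ if and only if its right link state $q'$ has a connection from $i$ to $i+1$. Hence a basis diagram lies in $\bigcap_{i \in S} K_i$ if and only if $q'$ contains the connection $\{i,i+1\}$ for each $i \in S$, i.e., if and only if $q' \leq q$.

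The two sides therefore have identical basis descriptions, proving the equality. No step appears difficult here: the whole content is the translation between (a) having a right-to-right connection at a specific pair of primed vertices in the diagram and (b) the right link state containing the corresponding connection, together with the unwinding of the ordering on link states from Definition \ref{def:TLLSO}.
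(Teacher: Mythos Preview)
Your proof is correct and follows essentially the same approach as the paper: both identify, via Lemma~\ref{lem:intersectionDescription} and Definition~\ref{def:TLLSO}, that a basis diagram lies in $\bigcap_{i\in S}K_i$ if and only if its right link state $q'$ satisfies $q'\leq q(S)$, which is exactly the defining condition for membership in $J_{\leq q}$. Your version is arguably cleaner, since the paper's proof invokes the left-generating property and Condition~(\ref{O3}) of Definition~\ref{def:LSOrdering}, neither of which is actually needed for the bare basis comparison.
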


\begin{proof} To say that a diagram $C_{p',q'}^1$ lies in $\bigcap_{i \in S} K_i$ is precisely to say that $q' \leq q$ in the link state ordering (Definition \ref{def:TLLSO}). Since the link state ordering is left generating (Lemma \ref{lem:TLLSOGenerating}), and since link state orderings respect the multiplication (Definition \ref{def:LSOrdering}, Condition (\ref{item:C3})) this is equivalent to saying that $C_{p',q'}^1$ lies in $J_{\leq q(S)}$. \end{proof}

\begin{proposition} \label{prop:wiggleWiggle} Let $t \geq 1$, $q \in M(t)$. There exists $p \in M(t)$ such that the pair-graph $\Gamma_{\langle q, p \rangle}$ has no loops and such that the cardinality of the pair-set $\norm{S_{\langle q,p\rangle}}$ is $t$.
\end{proposition}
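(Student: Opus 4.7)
I would approach this by an explicit construction of $p$, proceeding by induction on $(n-t)/2$, the number of edges of $q$. The base case $n = t$ is immediate: $q$ consists entirely of defects, so taking $p = q$ makes $\Gamma_{\langle q,p\rangle}$ a discrete set of $t$ vertices, each of which is a defect in both link states, giving $\norm{S_{\langle q,p\rangle}} = t$ with no loops trivially.

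For the inductive step, since $q$ is planar with at least one edge, it has an innermost edge $\{i,i+1\}$ between adjacent vertices. Deleting these two vertices yields a planar link state $\tilde q \in M(t)$ on $n-2$ vertices, and the inductive hypothesis supplies a $\tilde p \in M(t)$ on $n-2$ vertices with $\Gamma_{\langle \tilde q, \tilde p\rangle}$ loop-free and pair-set of cardinality $t$. The task is then to extend $\tilde p$ to some $p$ on $\underline n$. The naive extension that adds the edge $\{i,i+1\}$ to $p$ fails: it coincides with $q$'s innermost edge to form a $2$-cycle in the pair graph. Instead, one performs a local surgery: select a defect $d^{\ast}$ of $\tilde p$ near the reinsertion point, add a new $p$-edge from $d^{\ast}$ to one of the reinserted vertices (on the same side as $d^{\ast}$), and make the other reinserted vertex a new $p$-defect. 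This preserves the defect count, and in the pair graph it splices a short detour through the two new vertices onto a path that previously terminated at $d^{\ast}$; the altered path then has its $\tilde p$-defect endpoint replaced by the newly-created $p$-defect, preserving acyclicity and the pair-set cardinality (or, when $d^{\ast}$ was isolated, simply creating a new path whose endpoints are the $q$-defect $d^{\ast}$ and the new $p$-defect).

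The main obstacle is in ensuring that the surgery respects planarity of $p$: although $\tilde p$ is planar on the reduced vertex set, it may contain an edge straddling the reinserted pair $\{i,i+1\}$ that could cross the new $p$-edge $\{d^{\ast}, i\}$ or $\{i+1, d^{\ast}\}$. The fix is to take $d^{\ast}$ to be the \emph{nearest} defect of $\tilde p$ on one side of the insertion, which by definition leaves no $\tilde p$-defects in the interval between $d^{\ast}$ and the insertion point, and then argue that in this configuration no $\tilde p$-edge can straddle in a way that crosses the new edge. One must also handle boundary cases (innermost edge at position $\{1,2\}$ or $\{n-1,n\}$, or all $\tilde p$-defects lying on one side) by choosing the side of the surgery accordingly, which is always possible because $t \geq 1$. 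Once this recipe is pinned down, the verification of acyclicity and of $\norm{S_{\langle q,p\rangle}} = t$ amounts to a direct reading of the local modification to the path structure, most easily seen from a picture.
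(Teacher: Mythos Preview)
Your inductive strategy is natural, but the planarity step has a genuine gap: the inductively constructed $\tilde p$ can contain an edge straddling the reinsertion point, and then \emph{neither} choice of side for the surgery yields a planar $p$. Concretely, take $n=8$, $t=2$, and $q$ with defects at $1,2$ and edges $\{3,4\},\{5,6\},\{7,8\}$. Delete the innermost edge $\{3,4\}$; then in the recursion delete $\{7,8\}$ and finally $\{5,6\}$. Unwinding from $\{1,2\}$: reinserting $\{5,6\}$ with $d^\ast=2$ gives $\tilde{\tilde p}$ with edge $\{2,5\}$ and defects $1,6$; reinserting $\{7,8\}$ with $d^\ast=6$ gives $\tilde p$ with edges $\{2,5\},\{6,7\}$ and defects $1,8$. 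Now reinsert $\{3,4\}$. The $\tilde p$-edge $\{2,5\}$ straddles the gap, and the only available defects are $1$ and $8$. Surgery on the left adds $\{1,3\}$, which crosses $\{2,5\}$; surgery on the right adds $\{4,8\}$, which also crosses $\{2,5\}$. So the procedure outputs a non-planar $p$ regardless of the side chosen. Your claimed fix---that taking $d^\ast$ nearest leaves no $\tilde p$-defects between $d^\ast$ and the gap---only controls $\tilde p$-edges with an endpoint in that interval, not edges that jump over the gap entirely.

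The paper avoids this by constructing $p$ directly via a greedy algorithm rather than by induction on the number of edges: one maintains a set of ``live'' vertices (initially the $q$-defects) and a set of ``available'' $q$-edges (initially all of them), and at each step connects a live vertex $i$ in $p$ to the nearest endpoint $j$ of an available $q$-edge with no live vertex strictly between, then transfers liveness to the other endpoint of that $q$-edge and marks it unavailable; if no such $j$ exists, $i$ becomes a $p$-defect. Because each new $p$-edge is built with reference to the current live set rather than to a previously-frozen $\tilde p$, the straddling obstruction never arises, and planarity, acyclicity, and $\lvert S_{\langle q,p\rangle}\rvert=t$ all follow by tracking how components grow. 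Your approach could likely be repaired (for instance by specifying a careful deletion order, or by allowing the surgery to break a straddling edge), but as written the argument is incomplete.
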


After the proof, we will give an example of the algorithm.

\begin{proof} We will build $p$ inductively, adding one edge or defect at a time.

Let $L \subset \underline{n}$ be the set of \emph{live vertices}. Initially, set $L$ equal to the defects of $q$. Since we assume $t \geq 1$, $L$ is initially non-empty. We will also describe each edge of $q$ as either \emph{available} or \emph{unavailable}. Write $A$ for the set of available edges. Initially, all edges are available.

At each stage of the algorithm, select a live vertex $i \in L \subset \underline{n}$. If there exists at least one $j \in \underline{n}$ such that
\begin{itemize}
    \item $j$ lies at one end of an available edge $e$ in $q$, and
    \item no live vertex $i' \in L$ lies between $i$ and $j$
\end{itemize}
then select from among such $j$ one closest to $i$, and
\begin{itemize}
    \item add a connection in $p$ from $i$ to $j$,
    \item replace $i$ in the set of live vertices by the vertex $k$ lying at the other end of $e$ from $j$, and
    \item remove $e$ from the set of available edges.
\end{itemize}
If no such $j$ exists, then add a defect at $i$, and remove $i$ from the set $L$ of live vertices.

This algorithm terminates when the set of live vertices is empty.

We must argue that after performing this algorithm all vertices have exactly one edge in $p$ (so that $p$ is a Brauer link state), that $p$ is planar (hence a Temperley-Lieb link state) and that the conditions of the proposition statement are satisfied.

If at some stage at least one available edge remains, then it is automatic that it satisfies the criterion of the algorithm for some live vertex (i.e. one of possibly two closest to it). This means that the algorithm terminates precisely when all edges have become unavailable and all remaining live vertices have been converted to defects.

For a given edge $e$ from $i$ to $j$ in $q$, we see that $e$ starts the algorithm available, and can receive a connection only while it is available. Upon receiving a connection (at $i$, say), the other end $j$ of $e$ then becomes a new live vertex, and must later either become a defect or be connected to something else. The algorithm will produce no further connections to $i$ or $j$, since $e$ is now unavailable. Since the algorithm terminates when all edges have become unavailable and all remaining live vertices have been converted to defects, it follows that in $p$, the ends $i$ and $j$ of $e$ either both have connections in $p$, or one has a connection and one has a defect. In particular, this says precisely that the resulting $p$ is a Brauer link state.

By induction, the unavailable edges at each stage of the algorithm are precisely those which are connected by a sequence of edges in $\Gamma_{\langle q,p \rangle}$ to some defect of $q$. Also by induction, an unavailable edge must be connected at each stage by a sequence of edges to some live vertex, hence ultimately to some defect of $p$. Since all edges are ultimately unavailable, all edges of $q$, hence all vertices in $\underline{n}$, are ultimately connected to both a defect of $q$ and a defect of $p$. This gives that the cardinality of the pair set $\norm{S_{\langle q,p\rangle}}$ is $t$, and that the pair graph $\Gamma_{\langle q,p \rangle}$ has no loops, as required.

It remains to argue that $p$ is planar (Definition \ref{def:PlanarLS}). We must argue for each edge $e$ from $i$ to $j$ (say $i<j$) in $p$:
\begin{itemize}
    \item no defects of $p$ lie in the interval $(i,j)$, and
    \item no vertex in $(i,j)$ is connected to a vertex outside $(i,j)$.
\end{itemize}

At each stage, the algorithm begins from a live vertex and selects a nearest vertex lying at the end of an available edge in $q$, with no live vertices in between. In particular, after adding this new edge $e$ to $p$, no available edges of $q$ have an end lying inside $e$. It follows that $(i,j)$ consists only of vertices which already had an edge in $p$ when $e$ was added. Such an edge $e'$ must necessarily go to another vertex in the interval $(i,j)$, because otherwise when $e'$ was added there was a closer available vertex, contradicting the definition of the algorithm.

In total, this shows that the interval $(i,j)$ consists entirely of vertices connected by an edge to another vertex in $(i,j)$. In particular, no defects can be created in this interval because it contains no live vertices, and no live vertices can be created in it, all of its edges being unavailable. This establishes that $p$ is planar, and completes the proof. \end{proof}

\begin{example} We will give an example of the algorithm, with $n=16$, and
\begin{center}
$q=$\quad
\begin{tikzpicture}[x=1.5cm,y=-.5cm,baseline=-1.05cm]

\def\wid{2}
\def\hei{0.5}
\def\nodesize{3}
\def\ang{90}

\node[v, minimum size=\nodesize] (c1) at (2* \wid,0*\hei) {};
\node[v, minimum size=\nodesize] (c2) at (2* \wid,1*\hei) {};
\node[v, minimum size=\nodesize] (c3) at (2* \wid,2*\hei) {};
\node[v, minimum size=\nodesize] (c4) at (2* \wid,3*\hei) {};
\node[v, minimum size=\nodesize] (c5) at (2* \wid,4*\hei) {};
\node[v, minimum size=\nodesize] (c6) at (2* \wid,5*\hei) {};
\node[v, minimum size=\nodesize] (c7) at (2* \wid,6*\hei) {};
\node[v, minimum size=\nodesize] (c8) at (2* \wid,7*\hei) {};
\node[v, minimum size=\nodesize] (c9) at (2* \wid,8*\hei) {};
\node[v, minimum size=\nodesize] (c10) at (2* \wid,9*\hei) {};
\node[v, minimum size=\nodesize] (c11) at (2* \wid,10*\hei) {};
\node[v, minimum size=\nodesize] (c12) at (2* \wid,11*\hei) {};
\node[v, minimum size=\nodesize] (c13) at (2* \wid,12*\hei) {};
\node[v, minimum size=\nodesize] (c14) at (2* \wid,13*\hei) {};
\node[v, minimum size=\nodesize] (c15) at (2* \wid,14*\hei) {};
\node[v, minimum size=\nodesize] (c16) at (2* \wid,15*\hei) {};

\draw[e] (c1) to [out=180,in=0] (1.25*\wid,0*\hei);

\draw[e] (c2) to[out=180, in=180] (c9);
\draw[e] (c3) to[out=180, in=180] (c4);
\draw[e] (c5) to[out=180, in=180] (c8);
\draw[e] (c6) to[out=180, in=180] (c7);

\draw[e] (c10) to [out=180,in=0] (1.25*\wid,9*\hei);

\draw[e] (c11) to[out=180, in=180] (c16);
\draw[e] (c12) to[out=180, in=180] (c13);
\draw[e] (c14) to[out=180, in=180] (c15);

\end{tikzpicture}
\quad
\end{center}

Live vertices will be drawn in red, and unavailable edges (and the edges of $p$) will be drawn in grey. Note that the algorithm involves a choice at each stage. One sequence of choices leads to the following construction of $p$:

\begin{center}
1:
\quad
\begin{tikzpicture}[x=1.5cm,y=-.5cm,baseline=-1.05cm]

\def\wid{2}
\def\hei{0.5}
\def\nodesize{3}
\def\ang{90}

\node[v, red, minimum size=\nodesize] (c1) at (2* \wid,0*\hei) {};
\node[v, minimum size=\nodesize] (c2) at (2* \wid,1*\hei) {};
\node[v, minimum size=\nodesize] (c3) at (2* \wid,2*\hei) {};
\node[v, minimum size=\nodesize] (c4) at (2* \wid,3*\hei) {};
\node[v, minimum size=\nodesize] (c5) at (2* \wid,4*\hei) {};
\node[v, minimum size=\nodesize] (c6) at (2* \wid,5*\hei) {};
\node[v, minimum size=\nodesize] (c7) at (2* \wid,6*\hei) {};
\node[v, minimum size=\nodesize] (c8) at (2* \wid,7*\hei) {};
\node[v, minimum size=\nodesize] (c9) at (2* \wid,8*\hei) {};
\node[v, red, minimum size=\nodesize] (c10) at (2* \wid,9*\hei) {};
\node[v, minimum size=\nodesize] (c11) at (2* \wid,10*\hei) {};
\node[v, minimum size=\nodesize] (c12) at (2* \wid,11*\hei) {};
\node[v, minimum size=\nodesize] (c13) at (2* \wid,12*\hei) {};
\node[v, minimum size=\nodesize] (c14) at (2* \wid,13*\hei) {};
\node[v, minimum size=\nodesize] (c15) at (2* \wid,14*\hei) {};
\node[v, minimum size=\nodesize] (c16) at (2* \wid,15*\hei) {};

\draw[e,gray] (c1) to [out=180,in=0] (1.75*\wid,0*\hei);

\draw[e] (c2) to[out=180, in=180] (c9);
\draw[e] (c3) to[out=180, in=180] (c4);
\draw[e] (c5) to[out=180, in=180] (c8);
\draw[e] (c6) to[out=180, in=180] (c7);

\draw[e,gray] (c10) to [out=180,in=0] (1.75*\wid,9*\hei);

\draw[e] (c11) to[out=180, in=180] (c16);
\draw[e] (c12) to[out=180, in=180] (c13);
\draw[e] (c14) to[out=180, in=180] (c15);

\end{tikzpicture}
\quad
2:
\begin{tikzpicture}[x=1.5cm,y=-.5cm,baseline=-1.05cm]

\def\wid{2}
\def\hei{0.5}
\def\nodesize{3}
\def\ang{90}

\node[v, red, minimum size=\nodesize] (c1) at (2* \wid,0*\hei) {};
\node[v, red, minimum size=\nodesize] (c2) at (2* \wid,1*\hei) {};
\node[v, minimum size=\nodesize] (c3) at (2* \wid,2*\hei) {};
\node[v, minimum size=\nodesize] (c4) at (2* \wid,3*\hei) {};
\node[v, minimum size=\nodesize] (c5) at (2* \wid,4*\hei) {};
\node[v, minimum size=\nodesize] (c6) at (2* \wid,5*\hei) {};
\node[v, minimum size=\nodesize] (c7) at (2* \wid,6*\hei) {};
\node[v, minimum size=\nodesize] (c8) at (2* \wid,7*\hei) {};
\node[v, gray, minimum size=\nodesize] (c9) at (2* \wid,8*\hei) {};
\node[v, gray, minimum size=\nodesize] (c10) at (2* \wid,9*\hei) {};
\node[v, minimum size=\nodesize] (c11) at (2* \wid,10*\hei) {};
\node[v, minimum size=\nodesize] (c12) at (2* \wid,11*\hei) {};
\node[v, minimum size=\nodesize] (c13) at (2* \wid,12*\hei) {};
\node[v, minimum size=\nodesize] (c14) at (2* \wid,13*\hei) {};
\node[v, minimum size=\nodesize] (c15) at (2* \wid,14*\hei) {};
\node[v, minimum size=\nodesize] (c16) at (2* \wid,15*\hei) {};

\draw[e,gray] (c1) to [out=180,in=0] (1.75*\wid,0*\hei);

\draw[e,gray] (c2) to[out=180, in=180] (c9);
\draw[e] (c3) to[out=180, in=180] (c4);
\draw[e] (c5) to[out=180, in=180] (c8);
\draw[e] (c6) to[out=180, in=180] (c7);

\draw[e,gray] (c10) to [out=180,in=0] (1.75*\wid,9*\hei);

\draw[e] (c11) to[out=180, in=180] (c16);
\draw[e] (c12) to[out=180, in=180] (c13);
\draw[e] (c14) to[out=180, in=180] (c15);

\draw[e,gray] (c10) to[out=0, in=0] (c9);

\end{tikzpicture}
\quad
3:
\begin{tikzpicture}[x=1.5cm,y=-.5cm,baseline=-1.05cm]

\def\wid{2}
\def\hei{0.5}
\def\nodesize{3}
\def\ang{90}

\node[v, red, minimum size=\nodesize] (c1) at (2* \wid,0*\hei) {};
\node[v, gray, minimum size=\nodesize] (c2) at (2* \wid,1*\hei) {};
\node[v, gray, minimum size=\nodesize] (c3) at (2* \wid,2*\hei) {};
\node[v, red, minimum size=\nodesize] (c4) at (2* \wid,3*\hei) {};
\node[v, minimum size=\nodesize] (c5) at (2* \wid,4*\hei) {};
\node[v, minimum size=\nodesize] (c6) at (2* \wid,5*\hei) {};
\node[v, minimum size=\nodesize] (c7) at (2* \wid,6*\hei) {};
\node[v, minimum size=\nodesize] (c8) at (2* \wid,7*\hei) {};
\node[v, gray, minimum size=\nodesize] (c9) at (2* \wid,8*\hei) {};
\node[v, gray, minimum size=\nodesize] (c10) at (2* \wid,9*\hei) {};
\node[v, minimum size=\nodesize] (c11) at (2* \wid,10*\hei) {};
\node[v, minimum size=\nodesize] (c12) at (2* \wid,11*\hei) {};
\node[v, minimum size=\nodesize] (c13) at (2* \wid,12*\hei) {};
\node[v, minimum size=\nodesize] (c14) at (2* \wid,13*\hei) {};
\node[v, minimum size=\nodesize] (c15) at (2* \wid,14*\hei) {};
\node[v, minimum size=\nodesize] (c16) at (2* \wid,15*\hei) {};

\draw[e,gray] (c1) to [out=180,in=0] (1.75*\wid,0*\hei);

\draw[e,gray] (c2) to[out=180, in=180] (c9);
\draw[e,gray] (c3) to[out=180, in=180] (c4);
\draw[e] (c5) to[out=180, in=180] (c8);
\draw[e] (c6) to[out=180, in=180] (c7);

\draw[e,gray] (c10) to [out=180,in=0] (1.75*\wid,9*\hei);

\draw[e] (c11) to[out=180, in=180] (c16);
\draw[e] (c12) to[out=180, in=180] (c13);
\draw[e] (c14) to[out=180, in=180] (c15);

\draw[e,gray] (c10) to[out=0, in=0] (c9);
\draw[e,gray] (c2) to[out=0, in=0] (c3);

\end{tikzpicture}
\quad
4:
\begin{tikzpicture}[x=1.5cm,y=-.5cm,baseline=-1.05cm]

\def\wid{2}
\def\hei{0.5}
\def\nodesize{3}
\def\ang{90}

\node[v, red, minimum size=\nodesize] (c1) at (2* \wid,0*\hei) {};
\node[v, gray, minimum size=\nodesize] (c2) at (2* \wid,1*\hei) {};
\node[v, gray, minimum size=\nodesize] (c3) at (2* \wid,2*\hei) {};
\node[v, gray, minimum size=\nodesize] (c4) at (2* \wid,3*\hei) {};
\node[v, gray, minimum size=\nodesize] (c5) at (2* \wid,4*\hei) {};
\node[v, minimum size=\nodesize] (c6) at (2* \wid,5*\hei) {};
\node[v, minimum size=\nodesize] (c7) at (2* \wid,6*\hei) {};
\node[v, red, minimum size=\nodesize] (c8) at (2* \wid,7*\hei) {};
\node[v, gray, minimum size=\nodesize] (c9) at (2* \wid,8*\hei) {};
\node[v, gray, minimum size=\nodesize] (c10) at (2* \wid,9*\hei) {};
\node[v, minimum size=\nodesize] (c11) at (2* \wid,10*\hei) {};
\node[v, minimum size=\nodesize] (c12) at (2* \wid,11*\hei) {};
\node[v, minimum size=\nodesize] (c13) at (2* \wid,12*\hei) {};
\node[v, minimum size=\nodesize] (c14) at (2* \wid,13*\hei) {};
\node[v, minimum size=\nodesize] (c15) at (2* \wid,14*\hei) {};
\node[v, minimum size=\nodesize] (c16) at (2* \wid,15*\hei) {};

\draw[e,gray] (c1) to [out=180,in=0] (1.75*\wid,0*\hei);

\draw[e,gray] (c2) to[out=180, in=180] (c9);
\draw[e,gray] (c3) to[out=180, in=180] (c4);
\draw[e,gray] (c5) to[out=180, in=180] (c8);
\draw[e] (c6) to[out=180, in=180] (c7);

\draw[e,gray] (c10) to [out=180,in=0] (1.75*\wid,9*\hei);

\draw[e] (c11) to[out=180, in=180] (c16);
\draw[e] (c12) to[out=180, in=180] (c13);
\draw[e] (c14) to[out=180, in=180] (c15);

\draw[e,gray] (c10) to[out=0, in=0] (c9);
\draw[e,gray] (c2) to[out=0, in=0] (c3);
\draw[e,gray] (c4) to[out=0, in=0] (c5);

\end{tikzpicture}
\quad
5:
\begin{tikzpicture}[x=1.5cm,y=-.5cm,baseline=-1.05cm]

\def\wid{2}
\def\hei{0.5}
\def\nodesize{3}
\def\ang{90}

\node[v, gray, minimum size=\nodesize] (c1) at (2* \wid,0*\hei) {};
\node[v, gray, minimum size=\nodesize] (c2) at (2* \wid,1*\hei) {};
\node[v, gray, minimum size=\nodesize] (c3) at (2* \wid,2*\hei) {};
\node[v, gray, minimum size=\nodesize] (c4) at (2* \wid,3*\hei) {};
\node[v, gray, minimum size=\nodesize] (c5) at (2* \wid,4*\hei) {};
\node[v, gray, minimum size=\nodesize] (c6) at (2* \wid,5*\hei) {};
\node[v, red, minimum size=\nodesize] (c7) at (2* \wid,6*\hei) {};
\node[v, red, minimum size=\nodesize] (c8) at (2* \wid,7*\hei) {};
\node[v, gray, minimum size=\nodesize] (c9) at (2* \wid,8*\hei) {};
\node[v, gray, minimum size=\nodesize] (c10) at (2* \wid,9*\hei) {};
\node[v, minimum size=\nodesize] (c11) at (2* \wid,10*\hei) {};
\node[v, minimum size=\nodesize] (c12) at (2* \wid,11*\hei) {};
\node[v, minimum size=\nodesize] (c13) at (2* \wid,12*\hei) {};
\node[v, minimum size=\nodesize] (c14) at (2* \wid,13*\hei) {};
\node[v, minimum size=\nodesize] (c15) at (2* \wid,14*\hei) {};
\node[v, minimum size=\nodesize] (c16) at (2* \wid,15*\hei) {};

\draw[e,gray] (c1) to [out=180,in=0] (1.75*\wid,0*\hei);

\draw[e,gray] (c2) to[out=180, in=180] (c9);
\draw[e,gray] (c3) to[out=180, in=180] (c4);
\draw[e,gray] (c5) to[out=180, in=180] (c8);
\draw[e,gray] (c6) to[out=180, in=180] (c7);

\draw[e,gray] (c10) to [out=180,in=0] (1.75*\wid,9*\hei);

\draw[e] (c11) to[out=180, in=180] (c16);
\draw[e] (c12) to[out=180, in=180] (c13);
\draw[e] (c14) to[out=180, in=180] (c15);

\draw[e,gray] (c10) to[out=0, in=0] (c9);
\draw[e,gray] (c2) to[out=0, in=0] (c3);
\draw[e,gray] (c4) to[out=0, in=0] (c5);
\draw[e,gray] (c1) to[out=0, in=0] (c6);

\end{tikzpicture}
\quad
6:
\begin{tikzpicture}[x=1.5cm,y=-.5cm,baseline=-1.05cm]

\def\wid{2}
\def\hei{0.5}
\def\nodesize{3}
\def\ang{90}

\node[v, gray, minimum size=\nodesize] (c1) at (2* \wid,0*\hei) {};
\node[v, gray, minimum size=\nodesize] (c2) at (2* \wid,1*\hei) {};
\node[v, gray, minimum size=\nodesize] (c3) at (2* \wid,2*\hei) {};
\node[v, gray, minimum size=\nodesize] (c4) at (2* \wid,3*\hei) {};
\node[v, gray, minimum size=\nodesize] (c5) at (2* \wid,4*\hei) {};
\node[v, gray, minimum size=\nodesize] (c6) at (2* \wid,5*\hei) {};
\node[v, red, minimum size=\nodesize] (c7) at (2* \wid,6*\hei) {};
\node[v, gray, minimum size=\nodesize] (c8) at (2* \wid,7*\hei) {};
\node[v, gray, minimum size=\nodesize] (c9) at (2* \wid,8*\hei) {};
\node[v, gray, minimum size=\nodesize] (c10) at (2* \wid,9*\hei) {};
\node[v, gray, minimum size=\nodesize] (c11) at (2* \wid,10*\hei) {};
\node[v, minimum size=\nodesize] (c12) at (2* \wid,11*\hei) {};
\node[v, minimum size=\nodesize] (c13) at (2* \wid,12*\hei) {};
\node[v, minimum size=\nodesize] (c14) at (2* \wid,13*\hei) {};
\node[v, minimum size=\nodesize] (c15) at (2* \wid,14*\hei) {};
\node[v, red, minimum size=\nodesize] (c16) at (2* \wid,15*\hei) {};

\draw[e,gray] (c1) to [out=180,in=0] (1.75*\wid,0*\hei);

\draw[e,gray] (c2) to[out=180, in=180] (c9);
\draw[e,gray] (c3) to[out=180, in=180] (c4);
\draw[e,gray] (c5) to[out=180, in=180] (c8);
\draw[e,gray] (c6) to[out=180, in=180] (c7);

\draw[e,gray] (c10) to [out=180,in=0] (1.75*\wid,9*\hei);

\draw[e,gray] (c11) to[out=180, in=180] (c16);
\draw[e] (c12) to[out=180, in=180] (c13);
\draw[e] (c14) to[out=180, in=180] (c15);

\draw[e,gray] (c10) to[out=0, in=0] (c9);
\draw[e,gray] (c2) to[out=0, in=0] (c3);
\draw[e,gray] (c4) to[out=0, in=0] (c5);
\draw[e,gray] (c1) to[out=0, in=0] (c6);
\draw[e,gray] (c8) to[out=0, in=0] (c11);

\end{tikzpicture}
\quad
\vspace{1cm}

7:
\begin{tikzpicture}[x=1.5cm,y=-.5cm,baseline=-1.05cm]

\def\wid{2}
\def\hei{0.5}
\def\nodesize{3}
\def\ang{90}

\node[v, gray, minimum size=\nodesize] (c1) at (2* \wid,0*\hei) {};
\node[v, gray, minimum size=\nodesize] (c2) at (2* \wid,1*\hei) {};
\node[v, gray, minimum size=\nodesize] (c3) at (2* \wid,2*\hei) {};
\node[v, gray, minimum size=\nodesize] (c4) at (2* \wid,3*\hei) {};
\node[v, gray, minimum size=\nodesize] (c5) at (2* \wid,4*\hei) {};
\node[v, gray, minimum size=\nodesize] (c6) at (2* \wid,5*\hei) {};
\node[v, gray, minimum size=\nodesize] (c7) at (2* \wid,6*\hei) {};
\node[v, gray, minimum size=\nodesize] (c8) at (2* \wid,7*\hei) {};
\node[v, gray, minimum size=\nodesize] (c9) at (2* \wid,8*\hei) {};
\node[v, gray, minimum size=\nodesize] (c10) at (2* \wid,9*\hei) {};
\node[v, gray, minimum size=\nodesize] (c11) at (2* \wid,10*\hei) {};
\node[v, minimum size=\nodesize] (c12) at (2* \wid,11*\hei) {};
\node[v, minimum size=\nodesize] (c13) at (2* \wid,12*\hei) {};
\node[v, minimum size=\nodesize] (c14) at (2* \wid,13*\hei) {};
\node[v, minimum size=\nodesize] (c15) at (2* \wid,14*\hei) {};
\node[v, red, minimum size=\nodesize] (c16) at (2* \wid,15*\hei) {};

\draw[e,gray] (c1) to [out=180,in=0] (1.75*\wid,0*\hei);

\draw[e,gray] (c2) to[out=180, in=180] (c9);
\draw[e,gray] (c3) to[out=180, in=180] (c4);
\draw[e,gray] (c5) to[out=180, in=180] (c8);
\draw[e,gray] (c6) to[out=180, in=180] (c7);

\draw[e,gray] (c10) to [out=180,in=0] (1.75*\wid,9*\hei);

\draw[e,gray] (c11) to[out=180, in=180] (c16);
\draw[e] (c12) to[out=180, in=180] (c13);
\draw[e] (c14) to[out=180, in=180] (c15);

\draw[e,gray] (c10) to[out=0, in=0] (c9);
\draw[e,gray] (c2) to[out=0, in=0] (c3);
\draw[e,gray] (c4) to[out=0, in=0] (c5);
\draw[e,gray] (c1) to[out=0, in=0] (c6);
\draw[e,gray] (c8) to[out=0, in=0] (c11);

\draw[e,gray] (c7) to [out=0,in=180] (2.25*\wid,6*\hei);

\end{tikzpicture}
\quad
8:
\begin{tikzpicture}[x=1.5cm,y=-.5cm,baseline=-1.05cm]

\def\wid{2}
\def\hei{0.5}
\def\nodesize{3}
\def\ang{90}

\node[v, gray, minimum size=\nodesize] (c1) at (2* \wid,0*\hei) {};
\node[v, gray, minimum size=\nodesize] (c2) at (2* \wid,1*\hei) {};
\node[v, gray, minimum size=\nodesize] (c3) at (2* \wid,2*\hei) {};
\node[v, gray, minimum size=\nodesize] (c4) at (2* \wid,3*\hei) {};
\node[v, gray, minimum size=\nodesize] (c5) at (2* \wid,4*\hei) {};
\node[v, gray, minimum size=\nodesize] (c6) at (2* \wid,5*\hei) {};
\node[v, gray, minimum size=\nodesize] (c7) at (2* \wid,6*\hei) {};
\node[v, gray, minimum size=\nodesize] (c8) at (2* \wid,7*\hei) {};
\node[v, gray, minimum size=\nodesize] (c9) at (2* \wid,8*\hei) {};
\node[v, gray, minimum size=\nodesize] (c10) at (2* \wid,9*\hei) {};
\node[v, gray, minimum size=\nodesize] (c11) at (2* \wid,10*\hei) {};
\node[v, minimum size=\nodesize] (c12) at (2* \wid,11*\hei) {};
\node[v, minimum size=\nodesize] (c13) at (2* \wid,12*\hei) {};
\node[v, red, minimum size=\nodesize] (c14) at (2* \wid,13*\hei) {};
\node[v, gray, minimum size=\nodesize] (c15) at (2* \wid,14*\hei) {};
\node[v, gray,  minimum size=\nodesize] (c16) at (2* \wid,15*\hei) {};

\draw[e,gray] (c1) to [out=180,in=0] (1.75*\wid,0*\hei);

\draw[e,gray] (c2) to[out=180, in=180] (c9);
\draw[e,gray] (c3) to[out=180, in=180] (c4);
\draw[e,gray] (c5) to[out=180, in=180] (c8);
\draw[e,gray] (c6) to[out=180, in=180] (c7);

\draw[e,gray] (c10) to [out=180,in=0] (1.75*\wid,9*\hei);

\draw[e,gray] (c11) to[out=180, in=180] (c16);
\draw[e] (c12) to[out=180, in=180] (c13);
\draw[e, gray] (c14) to[out=180, in=180] (c15);

\draw[e,gray] (c10) to[out=0, in=0] (c9);
\draw[e,gray] (c2) to[out=0, in=0] (c3);
\draw[e,gray] (c4) to[out=0, in=0] (c5);
\draw[e,gray] (c1) to[out=0, in=0] (c6);
\draw[e,gray] (c8) to[out=0, in=0] (c11);
\draw[e,gray] (c7) to [out=0,in=180] (2.25*\wid,6*\hei);
\draw[e,gray] (c16) to[out=0, in=0] (c15);

\end{tikzpicture}
\quad
9:
\begin{tikzpicture}[x=1.5cm,y=-.5cm,baseline=-1.05cm]

\def\wid{2}
\def\hei{0.5}
\def\nodesize{3}
\def\ang{90}

\node[v, gray, minimum size=\nodesize] (c1) at (2* \wid,0*\hei) {};
\node[v, gray, minimum size=\nodesize] (c2) at (2* \wid,1*\hei) {};
\node[v, gray, minimum size=\nodesize] (c3) at (2* \wid,2*\hei) {};
\node[v, gray, minimum size=\nodesize] (c4) at (2* \wid,3*\hei) {};
\node[v, gray, minimum size=\nodesize] (c5) at (2* \wid,4*\hei) {};
\node[v, gray, minimum size=\nodesize] (c6) at (2* \wid,5*\hei) {};
\node[v, gray, minimum size=\nodesize] (c7) at (2* \wid,6*\hei) {};
\node[v, gray, minimum size=\nodesize] (c8) at (2* \wid,7*\hei) {};
\node[v, gray, minimum size=\nodesize] (c9) at (2* \wid,8*\hei) {};
\node[v, gray, minimum size=\nodesize] (c10) at (2* \wid,9*\hei) {};
\node[v, gray, minimum size=\nodesize] (c11) at (2* \wid,10*\hei) {};
\node[v, red, minimum size=\nodesize] (c12) at (2* \wid,11*\hei) {};
\node[v, gray, minimum size=\nodesize] (c13) at (2* \wid,12*\hei) {};
\node[v, gray, minimum size=\nodesize] (c14) at (2* \wid,13*\hei) {};
\node[v, gray, minimum size=\nodesize] (c15) at (2* \wid,14*\hei) {};
\node[v, gray,  minimum size=\nodesize] (c16) at (2* \wid,15*\hei) {};

\draw[e,gray] (c1) to [out=180,in=0] (1.75*\wid,0*\hei);

\draw[e,gray] (c2) to[out=180, in=180] (c9);
\draw[e,gray] (c3) to[out=180, in=180] (c4);
\draw[e,gray] (c5) to[out=180, in=180] (c8);
\draw[e,gray] (c6) to[out=180, in=180] (c7);

\draw[e,gray] (c10) to [out=180,in=0] (1.75*\wid,9*\hei);

\draw[e,gray] (c11) to[out=180, in=180] (c16);
\draw[e, gray] (c12) to[out=180, in=180] (c13);
\draw[e, gray] (c14) to[out=180, in=180] (c15);

\draw[e,gray] (c10) to[out=0, in=0] (c9);
\draw[e,gray] (c2) to[out=0, in=0] (c3);
\draw[e,gray] (c4) to[out=0, in=0] (c5);
\draw[e,gray] (c1) to[out=0, in=0] (c6);
\draw[e,gray] (c8) to[out=0, in=0] (c11);
\draw[e,gray] (c7) to [out=0,in=180] (2.25*\wid,6*\hei);
\draw[e,gray] (c16) to[out=0, in=0] (c15);
\draw[e,gray] (c14) to[out=0, in=0] (c13);

\end{tikzpicture}
\quad
10:
\begin{tikzpicture}[x=1.5cm,y=-.5cm,baseline=-1.05cm]

\def\wid{2}
\def\hei{0.5}
\def\nodesize{3}
\def\ang{90}

\node[v, gray, minimum size=\nodesize] (c1) at (2* \wid,0*\hei) {};
\node[v, gray, minimum size=\nodesize] (c2) at (2* \wid,1*\hei) {};
\node[v, gray, minimum size=\nodesize] (c3) at (2* \wid,2*\hei) {};
\node[v, gray, minimum size=\nodesize] (c4) at (2* \wid,3*\hei) {};
\node[v, gray, minimum size=\nodesize] (c5) at (2* \wid,4*\hei) {};
\node[v, gray, minimum size=\nodesize] (c6) at (2* \wid,5*\hei) {};
\node[v, gray, minimum size=\nodesize] (c7) at (2* \wid,6*\hei) {};
\node[v, gray, minimum size=\nodesize] (c8) at (2* \wid,7*\hei) {};
\node[v, gray, minimum size=\nodesize] (c9) at (2* \wid,8*\hei) {};
\node[v, gray, minimum size=\nodesize] (c10) at (2* \wid,9*\hei) {};
\node[v, gray, minimum size=\nodesize] (c11) at (2* \wid,10*\hei) {};
\node[v, gray, minimum size=\nodesize] (c12) at (2* \wid,11*\hei) {};
\node[v, gray, minimum size=\nodesize] (c13) at (2* \wid,12*\hei) {};
\node[v, gray, minimum size=\nodesize] (c14) at (2* \wid,13*\hei) {};
\node[v, gray, minimum size=\nodesize] (c15) at (2* \wid,14*\hei) {};
\node[v, gray,  minimum size=\nodesize] (c16) at (2* \wid,15*\hei) {};

\draw[e,gray] (c1) to [out=180,in=0] (1.75*\wid,0*\hei);

\draw[e,gray] (c2) to[out=180, in=180] (c9);
\draw[e,gray] (c3) to[out=180, in=180] (c4);
\draw[e,gray] (c5) to[out=180, in=180] (c8);
\draw[e,gray] (c6) to[out=180, in=180] (c7);

\draw[e,gray] (c10) to [out=180,in=0] (1.75*\wid,9*\hei);

\draw[e,gray] (c11) to[out=180, in=180] (c16);
\draw[e, gray] (c12) to[out=180, in=180] (c13);
\draw[e, gray] (c14) to[out=180, in=180] (c15);

\draw[e,gray] (c10) to[out=0, in=0] (c9);
\draw[e,gray] (c2) to[out=0, in=0] (c3);
\draw[e,gray] (c4) to[out=0, in=0] (c5);
\draw[e,gray] (c1) to[out=0, in=0] (c6);
\draw[e,gray] (c8) to[out=0, in=0] (c11);
\draw[e,gray] (c7) to [out=0,in=180] (2.25*\wid,6*\hei);
\draw[e,gray] (c16) to[out=0, in=0] (c15);
\draw[e,gray] (c14) to[out=0, in=0] (c13);
\draw[e,gray] (c12) to [out=0,in=180] (2.25*\wid,11*\hei);

\end{tikzpicture}
\quad
\end{center}
so
\begin{center}
$p=$\quad
\begin{tikzpicture}[x=1.5cm,y=-.5cm,baseline=-1.05cm]

\def\wid{2}
\def\hei{0.5}
\def\nodesize{3}
\def\ang{90}

\node[v, minimum size=\nodesize] (c1) at (2* \wid,0*\hei) {};
\node[v, minimum size=\nodesize] (c2) at (2* \wid,1*\hei) {};
\node[v, minimum size=\nodesize] (c3) at (2* \wid,2*\hei) {};
\node[v, minimum size=\nodesize] (c4) at (2* \wid,3*\hei) {};
\node[v, minimum size=\nodesize] (c5) at (2* \wid,4*\hei) {};
\node[v, minimum size=\nodesize] (c6) at (2* \wid,5*\hei) {};
\node[v, minimum size=\nodesize] (c7) at (2* \wid,6*\hei) {};
\node[v, minimum size=\nodesize] (c8) at (2* \wid,7*\hei) {};
\node[v, minimum size=\nodesize] (c9) at (2* \wid,8*\hei) {};
\node[v, minimum size=\nodesize] (c10) at (2* \wid,9*\hei) {};
\node[v, minimum size=\nodesize] (c11) at (2* \wid,10*\hei) {};
\node[v, minimum size=\nodesize] (c12) at (2* \wid,11*\hei) {};
\node[v, minimum size=\nodesize] (c13) at (2* \wid,12*\hei) {};
\node[v, minimum size=\nodesize] (c14) at (2* \wid,13*\hei) {};
\node[v, minimum size=\nodesize] (c15) at (2* \wid,14*\hei) {};
\node[v, minimum size=\nodesize] (c16) at (2* \wid,15*\hei) {};

\draw[e] (c10) to[out=180, in=180] (c9);
\draw[e] (c2) to[out=180, in=180] (c3);
\draw[e] (c4) to[out=180, in=180] (c5);
\draw[e] (c1) to[out=180, in=180] (c6);
\draw[e] (c8) to[out=180, in=180] (c11);
\draw[e] (c7) to [out=180,in=0] (1.75*\wid,6*\hei);
\draw[e] (c16) to[out=180, in=180] (c15);
\draw[e] (c14) to[out=180, in=180] (c13);
\draw[e] (c12) to [out=180,in=0] (1.75*\wid,11*\hei);

\end{tikzpicture}
\quad
\end{center}
is as required.
\end{example}

\begin{proposition} \label{prop:TLIdempotents}
Let $t \geq 1$, $q \in M(t)$. The left ideal $J_{\leq q}$ is principal and generated by an idempotent.
\end{proposition}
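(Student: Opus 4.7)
The plan is to apply Proposition \ref{prop:Idempotents} directly. All the structural ingredients are already in place: the Temperley-Lieb algebra $\TL_n(\delta)$ is diagram-like (Example \ref{ex:TL}), the partial order of Definition \ref{def:TLLSO} is a link state ordering on it (Lemma \ref{lem:TLLSO}), and that ordering is left generating (Lemma \ref{lem:TLLSOGenerating}). So the hypotheses of Proposition \ref{prop:Idempotents} apart from the bilinear form condition are already verified, and the proof reduces to verifying that condition for our given $q \in M(t)$.

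To do so, I would invoke Proposition \ref{prop:wiggleWiggle} to produce a planar link state $p \in M(t)$ such that the pair-graph $\Gamma_{\langle q,p \rangle}$ has no loops and $\norm{S_{\langle q,p \rangle}} = t$. This is the main obstacle, but the combinatorial ``wiggling'' algorithm has already been carried out in the proof of that proposition; no further work is required here.

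With $p$ in hand, I would set $v = C_p \in W(t)$ and take $\sigma = 1 \in G(t)$, noting that $G(t)$ is the trivial group for Temperley-Lieb, so this choice is forced. Since $\TL_n(\delta)$ is obtained from $\Brauer_n(\delta)$ by restriction, the bilinear form on $W(t)$ is computed identically in either algebra, and Lemma \ref{lem:formInterpretation} then yields
$$\langle C_q, C_p \rangle_\tau = \delta^i \chi_{\sigma(p,q)}(\tau) = \delta^0 \cdot \chi_1(\tau) = \chi_1(\tau),$$
exactly the indicator function of the identity in $G(t)$, as required.

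Applying Proposition \ref{prop:Idempotents} then produces an idempotent $e_q \in J_q$ generating $J_{\leq q}$ as a left ideal, completing the proof. In effect, the entire content of the proposition rests on the combinatorial construction in Proposition \ref{prop:wiggleWiggle}; everything else is a formal consequence of the machinery developed in the preceding sections.
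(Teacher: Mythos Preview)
Your proposal is correct and follows essentially the same approach as the paper: both invoke Proposition \ref{prop:Idempotents} after checking its hypotheses via Lemma \ref{lem:TLLSOGenerating} and Example \ref{ex:TL}, then use Proposition \ref{prop:wiggleWiggle} together with Lemma \ref{lem:formInterpretation} to produce the required $v=C_p$ with $\langle C_q, v\rangle_\tau$ equal to the indicator of the identity. Your computation is slightly more explicit (spelling out $\delta^0=1$ and $\sigma(p,q)=1$ from triviality of $G(t)$), but the argument is the same.
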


\begin{proof} By Lemma \ref{lem:TLLSOGenerating}, the link state ordering on the Temperley-Lieb algebras is left generating. Since the Temperley-Lieb algebras are diagram-like (Example \ref{ex:TL}), we may attempt to use Proposition \ref{prop:Idempotents}.

To do so, we must construct $v \in W(t)$ such that $$\langle C_{q}, v \rangle_{\tau} = \begin{cases} 1 & \textrm{ if } \tau=\sigma, \textrm{ and} \\ 0 & \textrm{ otherwise.} \end{cases}$$

Since for all $t$, the group $G(t)$ in the naive-cellular datum is the trivial group (i.e. the naive-cellular structure on the Temperley-Lieb algebras is just Graham and Lehrer's cellular structure), this reduces to constructing $v$ such that $\langle C_{q}, v \rangle = 1$, where the bilinear form is Graham and Lehrer's (c.f Remark \ref{rmk:cellForm}).

By Lemma \ref{lem:formInterpretation}, it suffices to take $v$ to be the element $C_p$ provided by Proposition \ref{prop:wiggleWiggle}. The result follows. \end{proof}

We are now ready to give a variant proof of Sroka's theorem \cite{Sroka}. It is not a completely new proof because the topological `back-end' (in the guise of Theorem 1.7 of \cite{Boyde2}) is essentially unchanged, and the `front-end' has just been dressed up in the language of cellular algebras, to permit a `bilinear form' verification of the fact that the $\bigcap_{i \in S} K_i$ are principal ideals generated by idempotents.

Sroka's result is as follows:

\begin{theorem} For all commutative rings $R$, all $\delta \in R$, and $n \geq 0$, we have $$\Tor_q^{\TL_n(\delta)}(\t,\t) \cong \begin{cases}
    R & q = 0 \textrm{ and} \\
    0 & 0 < q \leq \frac{n}{2}-2.
\end{cases}$$
Furthermore, if $n$ is odd, then actually $\Tor_q^{\TL_n(\delta)}(\t,\t) \cong 0$ for all $q > 0$.
\end{theorem}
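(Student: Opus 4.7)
The strategy is to apply the idempotent cover theorem (Theorem \ref{oldThm}) to the twosided ideal $I_{\leq n-1}$, using the Sroka cover $\{K_1,\ldots,K_{n-1}\}$ where $K_i$ is the left ideal spanned by diagrams containing the right-to-right arc between $i'$ and $(i+1)'$. Since the only Temperley-Lieb diagram with $n$ left-to-right connections is the identity, the quotient $\TL_n(\delta)/I_{\leq n-1}$ is canonically isomorphic to $R$, so the corresponding $\Tor$ groups are $R$ in degree zero and vanish in positive degrees. It therefore suffices to verify that $\{K_i\}$ is a sufficiently deep $R$-free idempotent cover of $I_{\leq n-1}$ and then invoke Theorem \ref{oldThm}.

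First I would establish the combinatorial structure of the cover. The equality $\sum K_i = I_{\leq n-1}$ is a planarity argument: a diagram with fewer than $n$ left-to-right connections has a right-to-right arc, and nestedness forces an innermost such arc to join adjacent primed vertices. The intersection lattice is governed by the link state ordering of Definition \ref{def:TLLSO}, which must first be checked to be a link state ordering in the sense of Definition \ref{def:LSOrdering} and to be left generating via a ``stretch out the middle and detour'' picture argument. With the ordering in hand, $\bigcap_{i \in S} K_i$ vanishes unless $S$ is \emph{innermost} (no two consecutive indices), in which case it equals $J_{\leq q(S)}$, where $q(S) \in M(t)$ with $t = n-2|S|$ is the planar link state pairing $i$ with $i+1$ for each $i \in S$. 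The $R$-freeness is immediate from the diagrammatic basis.

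The heart of the proof is to show that each nonzero $J_{\leq q(S)}$ is principal and idempotent-generated. Via Proposition \ref{prop:Idempotents}, this reduces to a bilinear-form condition: for each $q \in M(t)$, produce $v \in W(t)$ with $\langle C_q, v \rangle_1 = 1$. Lemma \ref{lem:formInterpretation} translates this into the existence of a planar link state $p \in M(t)$ whose pair graph $\Gamma_{\langle q,p \rangle}$ has all components contractible and each containing exactly one defect from each of $q$ and $p$. I expect the construction of $p$ to be the main obstacle, and would carry it out algorithmically: start with the defects of $q$ as ``live vertices'' and iteratively route an available edge of $q$ adjacent to a live vertex into a pairing of $p$, updating the live set accordingly. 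A careful inductive analysis shows the procedure preserves planarity and terminates with the desired pair-graph, provided $q$ has at least one defect, i.e.\ $t \geq 1$.

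With the idempotent cover in hand the proof divides by parity. For $n$ odd, every innermost $S \subseteq \{1,\ldots,n-1\}$ has $|S| \leq (n-1)/2$, hence $t \geq 1$ always; every nonzero intersection is then idempotent-generated, the height of the cover equals the width $n-1$, and Theorem \ref{oldThm} gives an isomorphism in every homological degree, yielding the vanishing of $\Tor_q$ for all $q > 0$ and the ``furthermore'' clause. For $n$ even, the single obstruction is the alternating subset $\{1,3,\ldots,n-1\}$ of size $n/2$, which produces $t=0$ and is beyond the reach of the algorithm; this caps the height, and the resulting application of Theorem \ref{oldThm} delivers the isomorphism in the stated range $0 < q \leq n/2 - 2$.
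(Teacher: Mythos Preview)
Your proposal is correct and follows the paper's own proof essentially line for line: the cover $\{K_i\}$ of $I_{\leq n-1}$, the identification $\TL_n(\delta)/I_{\leq n-1}\cong R$, the link state ordering and its ``stretch and detour'' left-generating argument, the identification $\bigcap_{i\in S}K_i=J_{\leq q(S)}$ for innermost $S$, the reduction via Proposition~\ref{prop:Idempotents} and Lemma~\ref{lem:formInterpretation} to constructing a planar $p$ with loop-free pair graph, the live-vertex algorithm producing that $p$ for $t\geq 1$, and the parity split on $n$ are exactly what the paper does. Your phrasing ``this caps the height'' in the even case is slightly imprecise---the paper states the height as $\tfrac{n}{2}-1$---but the logic and the conclusion agree.
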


\begin{proof} By Lemma \ref{lem:intersectionDescription}, Lemma \ref{lem:intersectionsToJ}, and Proposition \ref{prop:TLIdempotents}, the ideals $K_i$ form a principal idempotent cover \cite[Definition 1.6]{Boyde2} of $I_{\leq n-1}$, and this cover has height $\frac{n}{2}-1$ if $n$ is even, and height $n-1$ if $n$ is odd.

Then apply Theorem \ref{oldThm}, together with the observations that $\faktor{\TL_n(\delta)}{I_{\leq n-1}} \cong R$, and $$\Tor^R_q(\t,\t) \cong \begin{cases}
    0 & q > 0 \textrm{ and } \\
    R & q = 0
\end{cases}$$ to complete the proof. \end{proof}

\printbibliography

\end{document}